\newtheorem{thm}{Theorem}[section]
\newtheorem{lem}[thm]{Lemma}
\newtheorem{cor}[thm]{Corollary}
\newtheorem{prop}[thm]{Proposition}
\newtheorem{rmk}[thm]{Remark}
\newcommand{\mres}{\mathbin{\vrule height 1.6ex depth 0pt width
0.13ex\vrule height 0.13ex depth 0pt width 1.3ex}}
\newcommand*{\bdiv}{%
  \nonscript\mskip-\medmuskip\mkern5mu%
  \mathbin{\operator@font div}\penalty900\mkern5mu%
  \nonscript\mskip-\medmuskip
}
\theoremstyle{definition}
\title[Two capillary embedded geodesics on Riemannian $2$-disks]{Min-max construction of two capillary embedded geodesics on Riemannian $2$-disks}
\author{Dongyeong Ko}
\address{Department of Mathematics, Rutgers University - New Brunswick, Piscataway, NJ 08854}
\email{dk954@math.rutgers.edu}
\begin{document}

\maketitle
\begin{abstract}
     In this paper, we prove the existence of two capillary embedded geodesics with a contact angle $\theta \in (0,\pi/2)$ on Riemannian $2$-disks with strictly convex boundary, where the absence of a simple closed geodesic loop based on a point of boundary is given. In particular, our condition contains the cases of Riemannian $2$-disks with strictly convex boundary, nonnegative Gaussian curvature and total geodesic curvature lower bound $\pi$ of the boundary. Moreover, by providing examples, we prove that our total geodesic curvature condition is sharp to admit a capillary embedded geodesic with a contact angle $\theta \in (0,\pi/2)$ under the nonnegative interior Gaussian curvature condition. We also prove the existence of Morse Index $1$ and $2$ capillary embedded geodesics for generic metric under the assumptions above.
\end{abstract}
\section{Introduction}

Capillary minimal submanifolds in a manifold with boundary $(M,\partial M,g)$ are submanifolds with boundary in $\partial M$ which are minimal which intersect $\partial M$ with a constant angle $\theta \in (0,\pi/2)$, which describe the model of the interfaces between incompressible immiscible fluids physically. We refer to the angle $\theta$ as the contact angle. There is a beautiful survey of capillary surfaces in the book of Finn \cite{F}. Recently, the analysis of capillary minimal surface on Riemannian polyhedra played a key role to prove Gromov's geometric comparison theorems in Li \cite{Li} (see also \cite{Li2}). 

In the context of variational methods in geometry, a capillary minimal surface $\Sigma^{2} = \partial_{rel} \Omega$ on a closed manifold with boundary $(M^{3},\partial M^{3},g)$ with a contact angle $\theta$ is defined as a critical point of the capilarity energy functional defined as follows:
\begin{equation*}
    A^{\theta}(\Omega) = |\partial \Omega \mres \mathring{M}| + \cos \theta |\partial \Omega \mres \partial M|.
\end{equation*}
In particular, we consider an existence problem of capillary embedded geodesics on Riemannian $2$-disks $(D^{2},\partial D^{2},g)$.

A search of geodesics by the min-max theory dates back to Birkhoff \cite{B} on the existence of a closed geodesic on Riemannian $2$-spheres. Lusternik-Schnirelmann \cite{LS1} obtained three closed geodesics by finding non-trivial higher parameter families of simple closed curves in the space of embedded curves. Grayson \cite{G} proved the existence of three simple closed geodesics on any Riemannian $2$-spheres by the curve shortening flow.

In the free boundary and capillary setting, there are relatively fewer results. Lusternik-Schnirelmann \cite{LS1} proved that any bounded domain in $\mathbb{R}^{n}$ with a smooth convex boundary contains at least $n$ distinct orthogonal geodesic chords. Bos \cite{Bo} extended the existence results to Riemannian $n$-balls with locally convex boundary. Moreover, he provided an example of a non-convex domain in $\mathbb{R}^{n}$ which does not admit a free boundary embedded geodesic, which shows the necessity of the boundary convexity condition (See Figure 1 of \cite{Bo}). The discrete curve shortening process in the free boundary setting has been developed by Gluck-Ziller \cite{GZ} and Zhou \cite{Z} which produces free boundary immersed geodesics. Moreover, Donato-Montezuma \cite{DM} recently produced a free boundary embedded geodesic or a geodesic loop on nonnegatively curved surfaces with a convex boundary in the one-parameter Almgren-Pitts min-max setting (see also \cite{AMS} and \cite{L} for a few examples of constructions).

In the forthcoming paper of the author \cite{K2}, we prove the existence of two free boundary embedded geodesics with Morse Index bound on surfaces with convex boundary.

There has been progress in obtaining capillary surface via Almgren-Pitts min-max theory. De Masi-De Philippis \cite{DD} constructed a min-max capillary minimal surface on the convex domain in $\mathbb{R}^{3}$. Li-Zhou-Zhu \cite{LZZ} developed min-max theory of constant mean curvature capillary surfaces in $3$-manifolds with smooth boundary. However, Almgren-Pitts min-max procedure only gives rise to geodesic networks rather than the embedded curves on surfaces (see \cite{Pi} and \cite{Ai}). We are forced to apply additional techniques such as curve shortening flow to obtain an embedded geodesic.

Regarding the Morse theory of minimal submanifolds obtained by min-max methods, Marques-Neves (\cite{MN1}, \cite{MN2}) derived upper and lower bound of Morse Index of min-max minimal hypersurfaces on closed manifolds with dimension $3 \le n+1 \le 7$ by developing deformation techniques. The author \cite{K} obtained Morse Index bounds of the simple closed geodesics on Riemannian $2$-spheres (see also \cite{DMMS}). In contrast to the hypersurface case, in the curve setting, the deformation relies on the interpolation, arising from the curve shortening flow, between two families of simple closed curves with length control.

For critical points of perturbed functionals of the length functional on closed surfaces, a few results on the existence of curves with constant geodesic curvature have been established through degree theory and min-max methods. It has been a long-standing conjecture of Novikov \cite{N} that for any metric on a Riemannian $2$-sphere, there exists a closed embedded curve of constant curvature $c$ for every $c > 0$ (whose immersed version is referred as Arnold's conjecture \cite{Ar}). Schneider \cite{Sc} constructed two simple closed magnetic curves on quarter-pinched Riemannian $2$-spheres using degree theory. Recently, Sarnataro-Stryker \cite{SS2} improved the existence result by constructing a closed embedded curve with any prescribed constant curvature on $1/8$-pinched Riemannian $2$-spheres, based on the Almgren-Pitts frameworks by Zhou-Zhu \cite{ZZ2} and Ketover-Liokumovich \cite{KL} (For the existence results of immersed constant curvature curves, see \cite{AB} and \cite{CZ2}).

Additionally, while it is motivating to expect the existence of multiple critical points of perturbed functionals associated with open domains on manifolds, for instance, constant curvature curves and CMC hypersurfaces, there are very few results. The difficulty in constructing multiple critical points of weighted function mainly arises from the topological simplicity of the space of domains compared to the  space of curves (or hypersurfaces, cycles) in many cases. Dey \cite{D} and Mazurowski \cite{M} have made some progress in the multiple existence of CMC hypersurfaces using variational methods. Mazurowski-Zhou \cite{MZ} defined a half-volume spectrum and proved it is achieved by CMC hypersurfaces together with (potentially) minimal hypersurfaces via Allen-Cahn approach, where the construction of such hypersurfaces by Almgren-Pitts min-max remains open.

To our knowledge, our min-max construction is the first of the multi-parameter min-max construction of domains associated to the perturbed functionals. For critical points of unperturbed area functionals, there have been successful programs to construct infinitely many minimal hypersurfaces on closed manifolds based on Almgren-Pitts min-max theory; Marques-Neves \cite{MN} proved the existence of infinitely many closed embedded minimal hypersurfaces in closed manifolds $(M^{n+1},g)$ with $3 \le n+1 \le 7$ satisfying the Frankel property: any two closed embedded minimal hypersurfaces intersect each other, and Song \cite{So} proved the infinite existence in the general metric. Zhou \cite{Z2} proved the infinite existence for generic metric by settling the multiplicity conjecture (See \cite{SWZ} and \cite{Wa} for the free boundary setting).

We consider a multiple existence problem and Morse theory of embedded geodesics with a contact angle $\theta \in (0,\pi/2)$ at boundaries on Riemannian $2$-disks $(D^{2},\partial D^{2},g)$ with a strictly convex boundary whose boundary total (signed) curvature $\int_{\partial D^{2}} \kappa$ is bounded below by $\pi$. A capillary geodesic $\gamma = \partial_{rel}\Omega$ with a contact angle $\theta \in (0,\pi/2)$ is a critical point of the following functional $L^{\theta}$:
\begin{equation*}
    L^{\theta}(\Omega) = |\partial \Omega \mres \mathring {D}^{2} | + \cos \theta |\partial \Omega \mres \partial D^{2}|.
\end{equation*}
One version of our main result is as following:
\begin{thm}
For $\theta \in (0,\pi/2)$, there exists at least 2 capillary embedded geodesics with contact angle $\theta$ on Riemannian disks $(D^{2},\partial D^{2},g)$ with a strictly convex boundary and nonnegative interior Gaussian curvature in $D^{2}$ and a total signed geodesic curvature on boundary satisfies $\int_{\partial D^{2}} \kappa \ge \pi$.
\end{thm}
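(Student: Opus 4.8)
The plan is to derive the theorem from a softer non-degeneracy hypothesis and then run a two-parameter min-max for $L^{\theta}$ over domains, keeping all competitors embedded by a capillary curve shortening process. \emph{First} I would observe that the curvature assumptions rule out a simple closed geodesic loop based at a point of $\partial D^{2}$ (this is the content of the reduction announced in the abstract). Indeed, if $\gamma$ were such a loop based at $p\in\partial D^{2}$, then $\gamma$ cuts off an inner disk $U_{1}$ with $\partial U_{1}=\gamma$ and a complementary region $U_{2}$ with $\partial U_{2}=\gamma\cup\partial D^{2}$; Gauss--Bonnet on $U_{2}$, using $K\ge 0$, $\int_{\partial D^{2}}\kappa\ge\pi$ and $\int_{\gamma}\kappa=0$, bounds the sum $\beta_{1}+\beta_{2}$ of the two exterior angles at $p$ by $\pi$, whereas smoothness of $\partial D^{2}$ at $p$ forces the interior angles of $U_{1}$ and $U_{2}$ at $p$ to add up to $\pi$ and hence $\beta_{1}+\beta_{2}=\pi+(\text{interior angle of }U_{1}\text{ at }p)>\pi$, a contradiction; the same accounting ($\int_{D^{2}}K=2\pi-\int_{\partial D^{2}}\kappa\le\pi<2\pi$) also excludes simple closed interior geodesics. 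So it suffices to produce two capillary embedded geodesics assuming only that $\partial D^{2}$ is strictly convex and carries no geodesic loop based at a boundary point.

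\emph{Next}, I would set up the min-max. Working on the space of finite-perimeter domains $\Omega\subseteq D^{2}$ with the functional $L^{\theta}(\Omega)=|\partial\Omega\mres\mathring{D}^{2}|+\cos\theta\,|\partial\Omega\mres\partial D^{2}|$, whose critical points are geodesic in $\mathring{D}^{2}$ and meet $\partial D^{2}$ at angle $\theta$, I define one- and two-parameter widths $\omega_{1}^{\theta}\le\omega_{2}^{\theta}$ by sweepouts of $D^{2}$ detecting the relevant cohomology classes, in the spirit of the capillary Almgren--Pitts theory \cite{LZZ,DD} and the curve-shortening min-max of \cite{K,K2}. Positivity $\omega_{1}^{\theta}>0$ follows because a nontrivial one-sweepout must pass through a domain of volume $\tfrac12|D^{2}|$, whose interior boundary has length bounded below by the relative isoperimetric inequality. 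The analytic engine is a capillary Birkhoff curve shortening process: successively replacing the arcs of an embedded competitor by length-minimizing capillary segments deforms it through embedded curves so as to decrease $L^{\theta}$, to drive its contact angle to $\theta$, and — by strict convexity of $\partial D^{2}$ — to prevent it from crossing or collapsing onto $\partial D^{2}$.

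\emph{Then}, applying this process along an optimal one-sweepout and taking a limit produces a capillary embedded geodesic $\gamma_{1}$ with $L^{\theta}(\gamma_{1})=\omega_{1}^{\theta}$; the non-degeneracy hypothesis and strict convexity rule out the limit degenerating to a boundary-based geodesic loop, a point, or an arc of $\partial D^{2}$, and the curve shortening upgrades the Almgren--Pitts stationary network to an honest embedded curve. For a second geodesic I split into cases. If $\omega_{1}^{\theta}<\omega_{2}^{\theta}$, the same scheme applied to optimal two-sweepouts yields $\gamma_{2}$ with $L^{\theta}(\gamma_{2})=\omega_{2}^{\theta}>L^{\theta}(\gamma_{1})$, hence $\gamma_{2}\ne\gamma_{1}$. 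If $\omega_{1}^{\theta}=\omega_{2}^{\theta}=:\omega$, I argue by Lusternik--Schnirelmann: if there were only finitely many capillary geodesics at level $\omega$, an interpolation lemma — a quantitative capillary curve shortening giving definite $L^{\theta}$-decrease away from capillary geodesics, together with a length-controlled interpolation between nearby sweepouts — would let me deform an arbitrary two-sweepout so that its maximal $L^{\theta}$ drops below $\omega$ off a small neighborhood of the critical set while staying controlled near it, contradicting $\omega_{2}^{\theta}=\omega$; hence a second capillary embedded geodesic at level $\omega$ exists.

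\emph{The main obstacle} is the construction and quantitative control of the capillary curve shortening and of the interpolation used in the equal-widths case: one must decrease $L^{\theta}$ definitely away from capillary geodesics, preserve embeddedness, and avoid every boundary degeneration — tangential contact, collapse onto $\partial D^{2}$, formation of a geodesic loop — along the deformation, which is exactly where strict convexity of $\partial D^{2}$ and the absence of a boundary-based geodesic loop enter crucially. A secondary point is to check that the domain space carries enough topology for $\omega_{2}^{\theta}$ to be a genuine second min-max level and that multiplicity does not force $\gamma_{2}$ to coincide with $\gamma_{1}$.
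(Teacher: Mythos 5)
Your first paragraph is essentially the paper's own proof of this theorem: the paper reduces Theorem 1.1 to the hypothesis $(\star)$ (no critical geodesic lasso) exactly by a Gauss--Bonnet computation, applied to the region $\Omega$ bounded by the lasso together with $\int_{D^{2}}K=2\pi-\int_{\partial D^{2}}\kappa\le\pi$; your version, applied instead to the complementary region $U_{2}$ (resolved at the vertex into a disk with two corners), is the same bookkeeping and is correct, provided you also note that the interior angle of $U_{1}$ at $p$ is strictly positive (otherwise the two branches of the lasso would share a tangent at $p$ and coincide by uniqueness of geodesics), which is the same implicit point as the paper's choice $\alpha_{0}+\alpha_{L}<\pi$.

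The genuine gaps are in the part of your proposal that re-derives the two-geodesic existence under $(\star)$. First, in the equal-widths case your Lusternik--Schnirelmann step is missing the mechanism that actually produces the contradiction: with only finitely many capillary geodesics at level $\omega$, one must rule out that the essential circle of parameters of a tightened $2$-sweepout is entirely absorbed into a small $F$-neighborhood of a \emph{single} critical varifold, and a ``definite decrease away from the critical set'' does not do this. The paper handles it by building a topological condition into the definition of $2$-sweepouts (the endpoint map $s\mapsto e(\Phi_{s,t})$ has degree one on $\partial D^{2}$) and proving Lemma 4.1, which shows that $F$-closeness to one capillary geodesic confines the endpoints to a proper arc of $\partial D^{2}$, contradicting surjectivity of a degree-one map; you flag this only as a ``secondary point,'' but it is the heart of the multiplicity argument. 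Second, your analytic engine --- a capillary Birkhoff shortening process that decreases $L^{\theta}$, preserves embeddedness \emph{and} drives the contact angle to $\theta$ --- is precisely what is not available: the paper explicitly remarks (Remark 1.4 and the discussion of Neumann boundary conditions) that such a process is open, and instead tightens in two stages, first with the curve shortening flow with \emph{fixed} endpoints (Huisken--Edelen, Dirichlet condition) to make interior arcs geodesic, then with a Colding--De Lellis pull-tight by vector fields tangent to $\partial D^{2}$ to achieve stationarity of $L^{\theta}$ (hence the contact angle); it is exactly this Dirichlet detour that lets critical lassos appear as bad limits and forces hypothesis $(\star)$, so assuming a contact-angle-controlling flow assumes away the main technical difficulty. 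A smaller issue: nontriviality of the widths requires $w_{k}>\cos\theta\,|\partial D^{2}|$ (the value of $L^{\theta}$ at the sweepout endpoint $\Omega=D^{2}$), which the paper imports from Li--Zhou--Zhu; your isoperimetric bound at the half-volume slice only gives $w_{1}>0$ and does not by itself beat this threshold.
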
 

We show that the total curvature value $\pi$ is threshold to achieve two capillary embedded geodesics in a general angle $\theta \in (0,\pi/2)$ in the following sense:

\begin{prop} [Sharpness of the curvature condition] For a given $k \in (0,\pi)$, there exists a Riemannian $2$-disk $(D^{2},\partial D^{2},g)$ with a strictly convex boundary, interior nonnegative Gaussian curvature and a total signed curvature $\int_{\partial D^{2}} \kappa = k$ such that there exists $\theta \in (0, \pi/2)$ satisfying the following: The Riemannian disk $(D^{2},\partial D^{2},g)$ does not admit a capillary embedded geodesic with a contact angle $\theta$.
\end{prop}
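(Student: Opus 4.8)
The plan is to build an explicit rotationally symmetric example — a "spherical cap with a long collar" — where the only candidates for a capillary geodesic are the latitude circles, and then to check that for a suitable choice of contact angle none of these latitudes is capillary. Concretely, fix $k \in (0,\pi)$ and consider a metric on $D^2$ of warped-product type $g = dr^2 + \phi(r)^2\, d\psi^2$ on $\{0 \le r \le R\}$, $\psi \in \mathbb{R}/2\pi\mathbb{Z}$, with $\phi(0)=0$, $\phi'(0)=1$, $\phi'' \le 0$ (so the interior Gaussian curvature $K = -\phi''/\phi \ge 0$), and $\phi' > 0$ up to $r=R$ so that the boundary circle $\{r=R\}$ is strictly convex. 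The boundary geodesic curvature with respect to the inward normal is $\kappa = \phi'(R)/\phi(R)$ times the length element, so $\int_{\partial D^2}\kappa = 2\pi \phi'(R)$; choosing $\phi'(R) = k/(2\pi) \in (0,1/2)$ realizes the prescribed total curvature. (I would make $\phi$ agree with $\sin r$ near $r=0$, then bend it to a nearly linear piece with small positive slope, keeping concavity.)

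Next I would classify the capillary embedded geodesics. By symmetry a latitude $\{r=c\}$ is a geodesic iff it is a critical point of length under the subvariety constraint, which forces $\phi'(c)=0$; since $\phi' > 0$ everywhere on $(0,R]$, there is no closed interior geodesic latitude, so every capillary embedded geodesic must be a connected embedded arc meeting $\partial D^2$ in two points at angle $\theta$. For an arc $\gamma$ meeting the boundary at angle $\theta$, the first variation formula gives that $\gamma$ is an interior geodesic and the conormal condition holds at $\partial D^2$; a standard Gauss–Bonnet / Jacobi-field argument (using $K\ge 0$ in the interior and the capillary boundary condition) shows such an arc, together with the hypothesis that there is no simple closed geodesic loop, severely constrains $\theta$. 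The cleanest route is the one already implicit in the paper: by the Gauss–Bonnet theorem applied to the disk-type region $U$ cut off by $\gamma$,
\begin{equation*}
\int_U K + \int_{\gamma} \kappa_\gamma + \int_{\partial U \cap \partial D^2} \kappa + \sum (\pi - \alpha_i) = 2\pi,
\end{equation*}
and since $\kappa_\gamma = 0$ along the geodesic arc, the two exterior angles are each $\pi - \theta$ (or $\theta$, depending on orientation), and $\int_U K \ge 0$, one gets an inequality forcing $\int_{\partial U \cap \partial D^2}\kappa$ into a window determined by $\theta$. Running the same computation on the complementary region $D^2 \setminus U$ and adding, one finds $\int_{\partial D^2}\kappa + (\text{angle terms}) \ge$ (something), which for $\int_{\partial D^2}\kappa = k < \pi$ can be violated once $\theta$ is chosen small enough (explicitly, there is a threshold $\theta_0(k)$ below which no arc can close up).

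The main obstacle — and where I would spend the most care — is the step ruling out \emph{all} embedded capillary geodesic arcs, not just the symmetric ones: a priori an arc in a rotationally symmetric disk need not be symmetric, and one must use a Clairaut-type first integral ($\phi^2 \dot\psi = $ const along a geodesic) together with the boundary angle condition to show that any such arc is reflection-symmetric across a meridian, has its turning point at some $r = r_* > 0$ with $\cos\theta$ determined by $\phi'(R)$ and $\phi(r_*)/\phi(R)$ via the Clairaut relation, and then to check this is impossible for the chosen $\phi$ and a chosen $\theta$. Once the arcs are pinned down to this one-parameter family, the obstruction becomes a transparent inequality between $\theta$, $k = 2\pi\phi'(R)$, and the geometry of $\phi$, and choosing $\theta \in (0,\pi/2)$ with, say, $\cos\theta > \phi'(R) = k/(2\pi)$ rules out the existence of the turning point and hence of any capillary embedded geodesic. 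I would close by remarking that as $k \uparrow \pi$ the admissible window for $\theta$ shrinks to the empty set, consistent with the Theorem, which confirms that $\pi$ is the sharp threshold.
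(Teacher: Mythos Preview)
Your rotationally symmetric setup is reasonable, but the mechanism you invoke at the end is wrong, and this is where the argument breaks. On the warped product $g = dr^2 + \phi(r)^2\,d\psi^2$ with $\phi' > 0$ on $(0,R]$, Clairaut's relation gives $\phi(r)\cos\theta_r = c$ along any geodesic, where $\theta_r$ is the angle with the parallel. A geodesic leaving $\partial D^2$ at contact angle $\theta \in (0,\pi/2)$ has $c = \phi(R)\cos\theta \in (0,\phi(R))$, and since $\phi$ sweeps out all of $[0,\phi(R)]$, there \emph{always} exists a turning point $r_*$ with $\phi(r_*) = \phi(R)\cos\theta$. So your claimed obstruction ``$\cos\theta > \phi'(R) = k/(2\pi)$ rules out the existence of the turning point'' is simply false; moreover your own consistency check fails, since as $k \uparrow \pi$ this condition becomes $\theta < \pi/3$, a nonempty window, contradicting your final sentence. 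By the reflection symmetry of Clairaut geodesics, every such arc \emph{does} return to $\partial D^2$ at the same contact angle $\theta$, so an immersed capillary geodesic always exists for every $\theta$. The only possible obstruction is \emph{embeddedness}: the arc may wind too far in the $\psi$-direction and self-intersect before returning. Your Gauss--Bonnet paragraph gestures at this but never extracts a usable inequality; as written it is not a proof.

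The paper's argument is much more direct and avoids these difficulties. It takes the surface to be a flat Euclidean cone (with tip smoothly rounded off far from the boundary). On the flat part geodesics are straight lines in the unrolled sector, and one computes explicitly that the geodesic leaving a boundary point at contact angle exactly $k/2$ is a critical lasso (it closes up to a loop based at the starting point). Perturbing the contact angle to $k/2 + \epsilon$ then forces an interior self-intersection, by the explicit planar picture. Rotational symmetry transports this to every boundary point, so there is no capillary embedded geodesic at angle $\theta = k/2 + \epsilon \in (0,\pi/2)$. If you want to salvage your approach, you would need to compute the total $\psi$-variation $\Delta\psi(\theta)$ of the Clairaut arc and show $\Delta\psi(\theta) > \pi$ for some $\theta$; this is exactly what the cone example makes transparent.
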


\begin{rmk} We can also consider Almgren-Pitts min-max construction for the $L^{\theta}$-functional on non-negatively curved surfaces with a convex boundary as in the free boundary version in Donato-Montezuma \cite{DM}. In the capillary case, we may obtain even worse boundary regularity than that of free boundary cases. By analyzing the possible singularity formulation relying on the cut-and-paste arguments originating from Calabi-Cao \cite{CC}, in the one-parameter min-max, we obtain a capillary geodesic network whose density $\theta^{1}(p)$ at a single boundary singular point $p$ is bounded above by $2$, consisting of $4$ half-geodesics locally.   
\end{rmk}
\begin{rmk} It would be interesting to know whether there are two capillary immersed geodesics. We expect that the discrete curve shortening process to obtain a free boundary geodesic in Gluck-Ziller \cite{GZ} and Zhou \cite{Z} can be generalized to the capillary setting.
\end{rmk}
We denote a \emph{critical geodesic lasso} as a simple geodesic loop which is a critical point of the length functional (see Section 2.3 for the precise definition). Our min-max argument to derive Theorem 1.1 works if the min-max construction does not achieve a critical geodesic lasso as a min-max limit. In this context, we formulate a more general version of Theorem 1.1 after assuming the following hypothesis on the nonexistence of geodesic lassos with bounded length (See Section 3.2 for the more precise formulation of the hypothesis). 
 \begin{equation*}
     \text{Min-max widths are not realized by critical geodesic lassos on } (D^{2}, \partial D^{2},g). \tag{$\star$}
 \end{equation*}
\begin{thm}
Suppose $(D^{2},\partial D^{2},g)$ has a strictly convex boundary and satisfies the hypothesis $(\star)$. Then there exist at least two smooth embedded capillary geodesics on $(D^{2},\partial D^{2}, g)$.
\end{thm}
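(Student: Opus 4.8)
The plan is to run a \emph{two-parameter} min-max for $L^\theta$ over the space of Caccioppoli domains $\Omega \subset D^2$, in the spirit of the Lusternik--Schnirelmann construction of multiple geodesics \cite{LS1} and of the author's treatment of the $2$-sphere \cite{K} and of free boundary geodesics \cite{K2}, and then to upgrade the min-max critical configurations to smooth embedded capillary geodesics. The first difficulty is that the space $\mathcal{C}$ of Caccioppoli domains is weakly contractible, so a naive cup-length argument on $\mathcal{C}$ detects nothing beyond one parameter. I would get around this by using \emph{$\mathbb{Z}_2$-equivariant sweepouts}: a $k$-parameter sweepout is a flat-continuous map $\Phi\colon S^k\to\mathcal{C}$ with $\Phi(-x)=D^2\setminus\Phi(x)$, which descends to $\bar\Phi\colon\mathbb{RP}^k\to\mathcal{Z}_{1,\mathrm{rel}}(D^2,\partial D^2;\mathbb{Z}_2)$ into the space of relative $\mathbb{Z}_2$-cycles; the latter is weakly homotopy equivalent to $\mathbb{RP}^\infty$, so it carries a generator $\bar\lambda\in H^1(\,\cdot\,;\mathbb{Z}_2)$ with $\bar\lambda^{\,k}\neq0$, and any such $\bar\Phi$ pulls back $\bar\lambda^{\,k}$ to the generator of $H^k(\mathbb{RP}^k;\mathbb{Z}_2)$. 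Setting
\[
\omega_k \;=\; \inf_{\Phi\ k\text{-sweepout}}\ \sup_{x\in S^k}\ L^\theta(\Phi(x)),\qquad k=1,2,
\]
one has $0<\omega_1\le\omega_2<+\infty$: finiteness from an explicit equivariant sweepout built from a foliation of the disk, and strict positivity from strict convexity of $\partial D^2$ together with $\cos\theta<1$ (an isoperimetric/monotonicity bound forcing $L^\theta(\Omega)$ to be bounded below once $\Omega$ separates two fixed interior points). Note $L^\theta$ is \emph{not} $\mathbb{Z}_2$-invariant, so it does not descend to $\mathcal{Z}_{1,\mathrm{rel}}$: the equivariant structure lives on the domain of the sweepout while $L^\theta$ is evaluated on domains, and this mismatch is the source of the ``topological simplicity'' obstruction mentioned in the introduction.

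Next I would produce critical configurations at each width by running the min-max within the class of \emph{embedded} domains, using a discrete Birkhoff-type curve-shortening flow adapted to the contact-angle boundary condition as the tightening/deformation map. Strict convexity of $\partial D^2$ confines the flow --- curves cannot escape through the boundary, and the contact angle $\theta$ is preserved in the limit --- so embeddedness is maintained along the deformation and each min-max limit is either a smooth embedded capillary geodesic or a critical geodesic lasso (in the sense of Section~2.3). In an Almgren--Pitts treatment one would instead obtain only a capillary geodesic network, possibly with interior junctions or a boundary singular point of density $\le2$ consisting of four half-geodesics (cf.\ the Remark following Proposition~1.3 and the cut-and-paste analysis originating in Calabi--Cao \cite{CC} and its free-boundary analogue in Donato--Montezuma \cite{DM}). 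Hypothesis $(\star)$ excludes critical geodesic lassos from realizing $\omega_1$ or $\omega_2$, so both widths are attained by smooth embedded capillary geodesics $\gamma_1$ and $\gamma_2$.

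To extract \emph{two} distinct geodesics I would use the Lusternik--Schnirelmann dichotomy. If $\omega_1<\omega_2$, then $L^\theta(\gamma_1)\neq L^\theta(\gamma_2)$ and we are done. If $\omega_1=\omega_2=:\omega$, argue by contradiction: if there were only finitely many smooth embedded capillary geodesics --- otherwise there is nothing to prove --- and only one with $L^\theta$-value $\omega$, then a $2$-sweepout detecting $\bar\lambda^{\,2}$ could be deformed across level $\omega$ while still detecting $\bar\lambda$, contradicting $\bar\Phi^*(\bar\lambda^{\,2})\neq0$. The degeneracy that the min-max limit is a multiple cover $m\gamma$, $m\ge2$, of a single geodesic is excluded because the competitor sweepouts consist of embedded domains. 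A compactness statement for capillary geodesics with bounded $L^\theta$ under $(\star)$ is used along the way to pass to limits. Hence $(D^2,\partial D^2,g)$ admits at least two smooth embedded capillary geodesics.

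I expect the main obstacle to be the \emph{interpolation with length control}: deforming between two $L^\infty$-close families of embedded curves, and between a near-critical family and one concentrated near a genuine capillary geodesic, while keeping $\sup L^\theta$ below the relevant width. This is the capillary analogue of the Birkhoff interpolation underlying \cite{K} and \cite{K2}, and both the regularity upgrade and the Lusternik--Schnirelmann deformation rest on it; here the interpolation must additionally respect the contact-angle constraint and carry the non-equivariant functional over an equivariant family. A second delicate point is boundary regularity at the min-max limit --- showing that the contact angle $\theta$ persists and does not degenerate (no tangential or free-boundary piece forming, no accumulation of boundary singularities) --- which is precisely where $\cos\theta\in(0,1)$ and the local structure of the boundary singular point (density $\le2$, four half-geodesics) are needed.
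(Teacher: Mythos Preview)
Your proposal has two genuine gaps that the paper handles by a different route.

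\textbf{Tightening.} You propose a ``discrete Birkhoff-type curve-shortening flow adapted to the contact-angle boundary condition''. No such flow with controlled long-time behavior is available: as the paper explicitly notes, even the free boundary curve shortening flow has good long-time results only for planar domains (Langford--Zhu), and for a general Neumann angle the flow is expected to behave badly. The paper's tightening is instead two-stage (Theorem~3.4 then Theorem~3.5): first curve shortening flow with \emph{fixed} endpoints, for which Edelen's theorem (Theorem~B.1) guarantees smooth convergence to an embedded geodesic segment; then a standard pull-tight by vector fields in $\mathfrak{X}_{\tan}(D^2)$ to make the capillary varifold stationary. It is precisely the Dirichlet flow that may converge to a critical geodesic lasso rather than a geodesic with the prescribed angle~$\theta$, which is why hypothesis~$(\star)$ enters. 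In your framework --- if the flow really preserved the contact angle --- lassos would never arise and $(\star)$ would be superfluous, so your invocation of $(\star)$ is not coherent with your tightening mechanism.

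\textbf{Lusternik--Schnirelmann step.} You correctly note that $L^\theta$ is not $\mathbb{Z}_2$-invariant, but you do not say how to overcome it, and the standard cup-length argument genuinely breaks here: since $L^\theta(\Omega)\neq L^\theta(D^2\setminus\Omega)$, the superlevel set $\{x\in S^2: L^\theta(\Phi(x))\ge\omega-a\}$ is not antipodally invariant, and after tightening toward a single critical domain $\Omega_0$ it contains no antipodal pairs (because $D^2\setminus\Omega_0$ has contact angle $\pi-\theta$, hence is not critical), so excising it costs nothing in $\bar\lambda$-cup-length and no contradiction ensues. The paper bypasses this by a different notion of $2$-sweepout: the parameter space is the annulus $S^1\times I$, with the two boundary circles mapping to $\emptyset$ and $D^2$, and the topological constraint is that the \emph{endpoint map} $f_t(s)=e(\Phi_{s,t})\colon S^1\to\partial D^2$ has degree one. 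The LS argument (Theorem~4.2 with Lemma~4.1) then goes: if $w_1=w_2$ and there are only finitely many capillary geodesics, the set of parameters $F$-close to the critical set carries a loop $\alpha_j$ non-contractible in $S^1\times I$; the endpoint map along $\alpha_j$ is therefore surjective onto $\partial D^2$ by degree, yet is confined to a proper arc $N_{h_0,\partial D^2}(\mathrm{supp}\,\partial^b\Omega_i)\subsetneq\partial D^2$ by $F$-closeness to a single $\Omega_i$, a contradiction. This endpoint-degree invariant, not a cup power on cycles, is the key topological input.
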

Theorem 1.1 is obtained from Theorem 1.5 by the simple analysis based on Gauss-Bonnet theorem. Moreover, by the same reasoning, Theorem 1.5 covers the cases of Riemannian $2$-disks with a nonpositive Gaussian curvature or with a total curvature of positive curvature part does not exceed $\pi$. A Riemannian metric $g$ is called $\theta$-\emph{bumpy} if every capillary embedded geodesic with a fixed contact angle $\theta \in (0, \pi/2)$ is nondegenerate i.e. there is no capillary embedded geodesic with a fixed contact angle $\theta \in (0, \pi/2)$ that admits a non-trivial Jacobi field.
\begin{cor}
There are at least distinct two capillary embedded geodesics on Riemannian disks $(D^{2},\partial D^{2},g)$ with a strictly convex boundary and endowed a $\theta$-bumpy metric satisfying the hypothesis $(\star)$. Then for $k=1,2$, there exists a capillary embedded geodesic $\gamma_{k}$ and its corresponding domains $\Omega_{k}$ such that $\gamma_{k} = \partial_{rel} (\Omega_{k})$ satisfying $L^{\theta} (\Omega_{1})< L^{\theta} (\Omega_{2})$.
\end{cor}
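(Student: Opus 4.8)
The plan is to combine Theorem 1.5 with the standard min-max machinery for the $L^{\theta}$-functional, and to upgrade the bare existence statement to the strict inequality $L^{\theta}(\Omega_1) < L^{\theta}(\Omega_2)$ using the $\theta$-bumpiness of the metric. First I would set up the relevant min-max invariants: following the construction that underlies Theorem 1.5, one defines two min-max widths $\omega_1 \le \omega_2$ associated to one- and two-parameter sweepouts of the space of domains (suitably interpreted so that $\partial_{rel}\Omega$ sweeps out $D^2$), and shows that each $\omega_k$ is realized by a capillary embedded geodesic $\gamma_k = \partial_{rel}(\Omega_k)$ with $L^{\theta}(\Omega_k) = \omega_k$; the hypothesis $(\star)$ is exactly what is needed to rule out the degenerate min-max limit (a critical geodesic lasso) and guarantee that the limit is a genuine smooth capillary embedded geodesic, as in Theorem 1.5. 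The general position/interpolation arguments behind Theorem 1.5 give $\omega_1 \le \omega_2$ automatically, and if $\omega_1 < \omega_2$ the two geodesics are already distinct with the desired strict inequality.

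The remaining case is $\omega_1 = \omega_2 =: \omega$. Here I would argue by contradiction, using $\theta$-bumpiness. If the two widths coincide, then by the multiplicity-one / Lusternik–Schnirelmann-type argument adapted to this setting, the critical set at level $\omega$ must be "large": either it contains a capillary embedded geodesic admitting a nontrivial Jacobi field (i.e.\ a degenerate one), which directly contradicts $\theta$-bumpiness, or it contains infinitely many (in particular at least two) distinct capillary embedded geodesics at the level $\omega$. In the latter situation one again obtains two distinct capillary embedded geodesics; to then produce the strict inequality $L^{\theta}(\Omega_1) < L^{\theta}(\Omega_2)$ I would perturb: since the $\theta$-bumpy metric has only nondegenerate capillary geodesics, the set of critical values of $L^{\theta}$ below any fixed bound is discrete, so a width collapse with a continuum of solutions cannot occur; hence $\omega_1 < \omega_2$ must hold after all, and we take $\Omega_k$ realizing $\omega_k$.

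Concretely, the cleanest route is: (i) invoke Theorem 1.5 to get existence; (ii) observe that the two solutions it produces arise as the realizations of $\omega_1$ and $\omega_2$; (iii) note $\omega_1 \le \omega_2$ by monotonicity of min-max widths under inclusion of parameter spaces; (iv) if equality held, the width $\omega = \omega_1 = \omega_2$ would be achieved with higher "multiplicity" in the Lusternik–Schnirelmann sense, forcing either a degenerate capillary geodesic (excluded by $\theta$-bumpiness) or a one-parameter family of them, again excluded because nondegeneracy makes capillary geodesics isolated and their $L^{\theta}$-values discrete; (v) conclude $\omega_1 < \omega_2$, so $L^{\theta}(\Omega_1) = \omega_1 < \omega_2 = L^{\theta}(\Omega_2)$, and the two geodesics are distinct.

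The main obstacle I anticipate is step (iv): making rigorous the claim that a collapse $\omega_1 = \omega_2$ under a $\theta$-bumpy metric is impossible. This requires a Lusternik–Schnirelmann-type argument in the space of domains — showing that if a critical value is attained by finitely many nondegenerate capillary geodesics then the two widths must be strictly separated — which in turn needs the deformation theory (negative gradient flow of $L^{\theta}$ avoiding the critical set, together with the interpolation results coming from curve shortening that are used to prove Theorem 1.5) to be compatible with the two-parameter sweepout structure. I expect the bumpiness hypothesis to be used precisely to guarantee that each critical point has a well-defined finite Morse index and a local deformation retract structure, so that the classical argument goes through; the technical care lies in handling the boundary contact-angle condition within these deformations.
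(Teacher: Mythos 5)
Your proposal follows essentially the same route as the paper: realize $w_{1}\le w_{2}$ by capillary embedded geodesics via the tightened min-max sequences, invoke the Lusternik--Schnirelmann/degree argument (Theorem 4.2) to produce infinitely many capillary embedded geodesics at the common level if $w_{1}=w_{2}$, and use $\theta$-bumpiness to rule this out, forcing $w_{1}<w_{2}$ and hence two distinct geodesics with $L^{\theta}(\Omega_{1})<L^{\theta}(\Omega_{2})$. The one step you assert rather than justify --- that nondegeneracy makes capillary geodesics isolated with discrete $L^{\theta}$-values --- is exactly where the paper invokes its compactness theorem (Theorem A.1, Corollary A.2): an infinite family with bounded $L^{\theta}$ converges smoothly with multiplicity one to a limit capillary geodesic, and the convergence yields a nontrivial Jacobi field on the limit, contradicting $\theta$-bumpiness.
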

As in the Morse theoretic characterization in \cite{K} by the author, we obtain the following Morse Index bound of capillary embedded geodesics:
\begin{thm}
Suppose $(D^{2},\partial D^{2}, g)$ is a Riemannian $2$-disk whose boundary is strictly convex and satisfies the hypothesis $(\star)$ endowed with a $\theta$-bumpy metric. Then for each $k=1,2$, there exists a capillary embedded geodesic $\gamma_{k}= \partial_{rel} \Omega_{k}$ with
\begin{equation*}
    index(\gamma_{k}) = k
\end{equation*}
and the $L^{\theta}$-functional of two geodesics satisfy $L^{\theta}(\Omega_{1})<L^{\theta}(\Omega_{2})$.
\end{thm}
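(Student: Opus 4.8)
The plan is to run the same Morse-theoretic program as in \cite{K}, but for the $L^{\theta}$-functional on domains, taking as input the min-max construction already used to prove Theorem 1.5. Recall that the proof of Theorem 1.5 produces, from a one-parameter and a two-parameter min-max over families sweeping out $D^{2}$ by domains, widths $\omega_{1}\le\omega_{2}$; under hypothesis $(\star)$ the capillary curve-shortening step upgrades the associated min-max limits to smooth embedded capillary geodesics $\gamma_{1}=\partial_{rel}\Omega_{1}$, $\gamma_{2}=\partial_{rel}\Omega_{2}$ realizing these widths, $L^{\theta}(\Omega_{k})=\omega_{k}$. Since the metric is $\theta$-bumpy there are only finitely many capillary embedded geodesics, so the Lusternik--Schnirelmann mechanism behind Corollary 1.6 (equality of consecutive widths would force a continuum of critical points) gives the strict inequality $\omega_{1}<\omega_{2}$; in particular $L^{\theta}(\Omega_{1})<L^{\theta}(\Omega_{2})$ and $\gamma_{1}\ne\gamma_{2}$. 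What remains is the identity $\mathrm{index}(\gamma_{k})=k$.

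For the upper bound $\mathrm{index}(\gamma_{k})\le k$ I would argue by contradiction in the spirit of Marques--Neves \cite{MN1, MN2}: if $\gamma_{k}$ carried a $(k{+}1)$-dimensional space of capillary Jacobi-type variations on which the second variation of $L^{\theta}$ is negative definite, one uses this space to push an almost-optimal $k$-parameter sweepout strictly downward in a neighborhood of $\gamma_{k}$ in parameter space, and glues this back to the original family away from $\gamma_{k}$. The interpolation between the pushed and the original families is performed by the capillary curve shortening flow, which does not increase $L^{\theta}$ and preserves embeddedness for admissible initial data; the quantitative ingredient --- the analogue of the catenoid estimate --- is the uniform $L^{\theta}$-decrease furnished by the negative second variation, combined with the length control of the flow near the contact points. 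This would yield a $k$-sweepout with $\limsup\max L^{\theta}<\omega_{k}$, contradicting the definition of the width.

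For the lower bound $\mathrm{index}(\gamma_{k})\ge k$: when $k=1$, $\gamma_{1}$ is produced as a non-trivial (mountain-pass type) min-max critical point, hence cannot be a strict local minimizer of $L^{\theta}$, and $\theta$-bumpiness forces $\mathrm{index}(\gamma_{1})\ge1$, so with the upper bound $\mathrm{index}(\gamma_{1})=1$. When $k=2$, suppose for contradiction that every capillary embedded geodesic at level $\omega_{2}$ has index $\le1$. Then the union of the (at most one-dimensional) unstable sets of these finitely many geodesics is too small to obstruct a two-parameter sweepout: one deforms an almost-optimal $2$-sweepout off of these unstable sets and then downward, again using the capillary curve shortening flow for the interpolation, producing a $2$-sweepout with $\limsup\max L^{\theta}<\omega_{2}$ and contradicting the minimality of $\omega_{2}$. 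Hence some capillary embedded geodesic at level $\omega_{2}$ --- which we take as $\gamma_{2}$ --- has index $\ge2$, so $\mathrm{index}(\gamma_{2})=2$; and since $\gamma_{2}$ sits at level $\omega_{2}>\omega_{1}$, we get $L^{\theta}(\Omega_{1})<L^{\theta}(\Omega_{2})$ as required.

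The main obstacle is the deformation theory invoked in the two previous paragraphs: transplanting the Marques--Neves deformation/interpolation machinery from the hypersurface-area setting to the one-dimensional capillary-length setting requires a curve-shortening interpolation that (i) respects the contact angle $\theta$ along $\partial D^{2}$, (ii) keeps the sweepout curves embedded throughout, and (iii) delivers $L^{\theta}$-decrease matching the negative second variation even when $\gamma_{k}$ touches $\partial D^{2}$ --- so the boundary term $\cos\theta\,|\partial\Omega\mres\partial D^{2}|$ and the behavior of the flow at the capillary contact points are precisely where the interior estimates of \cite{MN1, MN2} must be replaced by boundary-adapted ones. A secondary point to verify is that, under $(\star)$, the one- and two-parameter min-max limits are genuine embedded capillary geodesics of multiplicity one (not geodesic networks), and that the curve-shortening upgrade from Theorem 1.5 preserves both the width value and the index information used above.
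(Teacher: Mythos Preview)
Your overall strategy---upper index bound via Marques--Neves type deformation, lower index bound by contradiction using the too-small unstable set, and the inequality $L^{\theta}(\Omega_{1})<L^{\theta}(\Omega_{2})$ from bumpiness via Lusternik--Schnirelmann---matches the paper. But the interpolation tool you propose is the wrong one, and this is a genuine gap rather than a detail.

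You repeatedly invoke a ``capillary curve shortening flow'' that ``respects the contact angle $\theta$ along $\partial D^{2}$'' and does not increase $L^{\theta}$. No such flow is available: curve shortening with a general Neumann (contact-angle) boundary condition is not understood, and the paper explicitly notes that it is expected to behave badly compared even to the free boundary flow. So the step ``interpolate via capillary CSF'' cannot be carried out as stated, and your last paragraph correctly identifies this as the obstacle but then lists precisely the nonexistent ingredient as the requirement.

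The paper circumvents this entirely. The interpolation is built from curve shortening flow with \emph{fixed} endpoints (Dirichlet data), for which Edelen's regularity and long-time convergence theorem is available. To make this compatible with the capillary problem, the paper first performs a \emph{sliding} homotopy (Lemma~5.4) that appends short boundary arcs so as to move the endpoints of every curve in the family to coincide with $\partial\gamma$, at the cost of a controlled $F$-distance error of order $\sqrt{\delta}$. Only then is the fixed-endpoint flow applied; since the endpoints are now frozen, the boundary varifold $\partial^{b}\Omega$ does not change during the flow, and the $L^{\theta}$-decrease comes entirely from the interior length. A separate estimate (Corollary~5.11) then bounds the boundary contribution to $L^{\theta}$ along the combined sliding~$+$~flow~$+$~squeezing homotopy. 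The whole package also requires a conformal metric perturbation making $K<0$ on $\gamma$ so that a mean-convex foliation of the tubular neighborhood with \emph{fixed} endpoints exists (Lemmas~5.3, 5.5), which is what makes the squeezing work. None of this is visible in your sketch, and it is the substantive content of the proof: replacing the unavailable Neumann flow by ``slide to fix the endpoints, then run the Dirichlet flow'' is exactly the idea you are missing.
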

We also obtain an Index bound for general metrics as:
\begin{cor}
For a $2$-Riemannian disk $(D^{2},\partial D^{2},g)$ with a strictly convex boundary satisfying the hypothesis $(\star)$, for $k=1,2$, there exists a capillary embedded geodesic $\gamma_{k}$ with
\begin{equation*}
    index(\gamma_{k}) \le k \le index(\gamma_{k})+ nullity (\gamma_{k}).
\end{equation*}
\end{cor}

We outline the proof of our main results. We discuss the parametrization of one and two parameter family of domains first. Roughly speaking, motivated by the fact that the space of (unparametrized) embedded intervals on $D^{2}$ whose endpoints are on $\partial D^{2}$ relative to the space of point curves retracts onto $(M^{2} ,\partial M^{2})$, where $M^{2}$ is a Möbius strip, which is proven by Smale \cite{Sm2} (see also Appendix of \cite{Ha}). Then the space of connected domains whose relative boundary is an interval can be retracted to an annulus $S^{1} \times I$, which is a double cover of the space of curves. The boundary of this annulus consists of the empty set and the whole disk which corresponds to point curves in the space of embedded curves. Moreover, in the viewpoint of Morse theory, the empty set and the whole disk can be regarded as Morse Index zero critical points (see Proposition 3.3). Then by the strong Morse inequality, we can expect the existence of Morse Index $1$ and $2$ critical points. Based on this intuition, it is natural to parametrize domains by an annulus $S^{1} \times I$ to perform $2$-parameter min-max construction. 

To find these critical points we expect and to ensure they are distinct, we consider a $2$-parameter family of domains parametrized by the annulus $S^{1} \times I$ whose two boundary components $S^{1} \times \{ 0 \}$ and $S^{1} \times \{ 1 \}$ correspond to the empty set and the whole disk, respectively. Then we parametrize the endpoints of relative boundaries of domains in the family and add a condition that the endpoints of relative boundaries to have a topological degree one. By applying Lusternik-Schnirelmann type arguments, unless a geodesic lasso is achieved as a min-max limit, we obtain two distinct capillary embedded geodesics with different $L^{\theta}$-functional values or a $S^{1}$-cycle of capillary embedded geodesics.

For the tightening process to accumulate domains whose $L^{\theta}$-functional is close to a critical value toward capillary embedded geodesics, we apply the curve shortening flow with fixed boundary established by Huisken \cite{H} and Edelen \cite{E}, and the standard tightening process by vector fields as in Colding-De Lellis \cite{CD}. This procedure relying on the flow with a Dirichlet boundary condition forces us to obtain a critical geodesic lasso with an arbitary contact angle instead of capillary embedded geodesics with the prescribed contact angle $\theta$. While Langford-Zhu \cite{LZ} recently obtained results on the long-time behavior of the free boundary curve shortening flow on domains in $\mathbb{R}^{2}$, there is no such analysis in general Neumann boundary condition and even it is expected to behave badly compared to the free boundary curve shortening flow.

Furthermore, we obtain a generic Morse Index bound of capillary embedded geodesics produced by the min-max procedure. We refine the interpolation arguments in \cite{K} by the author in the closed geodesics case using curve shortening flow with fixed boundary. Together with the bumpy metric analysis in Appendix C, we obtain capillary embedded geodesics with Morse Index $1$ and $2$ for generic metrics satisfying the hypothesis $(\star)$.

We then construct examples showing that our boundary condition is sharp given a nonnegatively curved surface. We consider an Euclidean cone as a disk with an extremal metric in the sense that the interior curvature concentrates to a vertex of the cone. If a cone angle is smaller than $\pi/6$, we can find a simple geodesic loop based on a vertex on the boundary. This corresponds to a cone whose boundary total curvature is smaller than $\pi$. Then if we consider a geodesic emanating from any boundary point with a contact angle slightly larger than the contact angle of the simple geodesic loop, then we have an self-intersection point. Then by deforming the ``tip part'' of this cone, we obtain example surfaces which do not admit a capillary embedded geodesic for some contact angles. Hence we deduce that our boundary curvature condition in Theorem 1.1 is optimal. This constructive proof is inspired by the example surface illustrated in Donato-Montezuma (See Figure 1 in \cite{DM}). We expect the absence of a capillary geodesic with a certain contact angle in case of the existence of simple geodesic loop and the rotationally symmetric disk by the idea above.

For $k \in (0,\pi)$, we conjecture that on Riemannian disks $(D^{2},\partial D^{2},g)$ with a strictly convex boundary, nonnegative interior Gaussian curvature in $D^{2}$ and a total signed geodesic curvature on boundary satisfies $\int_{\partial D^{2}} \kappa \ge k$, there exists a capillary embedded geodesic with a contact angle $\theta \in (0, \min(k/2,\pi/2))$ based on cone examples in the proof of Proposition 1.2. However, some geodesics do not appear as a min-max limit of our smooth min-max construction since they achieve smaller $L^{\theta}$-functional than the critical value (see Theorem 1.1 of \cite{DM} and the following discussion therein). Moreover, even by the consideration of Almgren-Pitts min-max to construct such capillary geodesics, it seems to be delicate to rule out singular stationary varifolds of $L^{\theta}$-functional discussed in Remark 1.3. The adaption of idea of Mazurowski-Zhou \cite{MZ} to consider boundary length preserving sweepouts may give rise to obtain short capillary embedded geodesics which are not obtained by our method, however we lose a control of a contact angle so there is a trade-off. 

The organization of this paper is as follows. In Section $2$, we introduce basic notions on capillary geodesics and their properties. In Section $3$, we construct families and domains, explain the hypothesis, and establish tightening arguments toward capillary geodesics. In Section $4$, we show the existence of two capillary geodesics on convex disks with a certain condition. In Section $5$, we prove the Morse Index estimate. In Section $6$, we prove that our boundary curvature condition is sharp.

In Appendix A, we prove the compactness theorem of capillary geodesics. In Appendix B, we recall the curve shortening flow with fixed boundary points. In Appendix C, we prove bumpy metric theorems of capillary embedded geodesics.
\section*{Acknowledgments}
The author is grateful to his advisor Daniel Ketover for suggesting this topic and valuable discussions, and giving valuable comments on the draft. The author thanks Otis Chodosh and Tom Ilmanen for providing insights on curve shortening flow with boundary. The author is also thankful to Yevgeny Liokumovich and Xin Zhou for valuable discussions relating to this work.  The author was partially supported by NSF grant DMS-1906385.
\section{Capillary geodesics and basic properties}

Our ambient surface with boundary $(D^{2},\partial D^{2}, g)$ is a Riemannian $2$-disk with strictly convex boundary $\partial D^{2}$. For a compact subset $K \subset D^{2}$, we define the relative interior $int_{D^{2}}(K)$ as an interior with respect to the relative topology on $D^{2}$, and denote a topological interior of $D^{2}$ as $\mathring{D^{2}}$. We denote the \emph{relative boundary} $\partial_{rel}(\Omega)$ as a set of points in $D^{2}$ which are neither in $int_{D^{2}}(\Omega)$ nor $int_{D^{2}}(D^{2} \setminus \Omega)$. We define the space of simple domain $\mathcal{D}(D^{2})$ as
\begin{equation*}
    \mathcal{D}(D^{2}) = \{ \Omega | \partial_{rel}(\Omega) \text{ is a simple curve segment with endpoints on } \partial D^{2} \}.
\end{equation*}
We allow $\partial_{rel}(\Omega)$ to be empty. i.e. $\mathcal{D}(D^{2})$ contains $\emptyset$ and $D^{2}$.

Also we denote that $\mathfrak{X}(D^{2})$ as a set of smooth vector fields on $D^{2}$, and $\mathfrak{X}_{tan}(D^{2})$ as a set of smooth vector fields on $D^{2}$ that are tangent to $\partial D^{2}$ on $\partial D^{2}$. We always assume that $D^{2}$ is contained in some closed surface $M$ such that $D^{2} \subset M$. Let us denote that $B_{r}(p)$ as a geodesic open ball with radius $r$ centered at $p \in D^{2}$, and $\tilde{B}_{r}^{+}(p)$ as a half geodesic open ball with radius $r$ centered at $p \in \partial D^{2}$.

 Let $\gamma$ be a smooth, embedded curve which can be presented as a relative boundary of a connected domain $\Omega$ in $\mathcal{D}(D^{2})$. We define $\nu$ as a unit normal vector field of $\gamma$ pointing outward $\Omega$, and $\overline{\eta}$ as a unit normal vector field of $\partial D^{2}$ which points outward of $D^{2}$. Moreover, we denote $\eta$ as outward pointing unit normal vectors of $\partial \gamma$ in $\gamma$, and $\overline{\nu}$ as outward pointing unit vectors on $\partial \gamma$ in $\partial D^{2}$. 

 We recall the definition of Hausdorff distance  between two nonempty subsets $A$ and $B$ which is given by
\begin{equation*}
    d_{\mathcal{H}}(A,B) = \max \Big\{ \sup_{a \in A} d(a,B), \sup_{b \in B} d(b,A) \Big\},
\end{equation*}
where $d(a,B) = \inf _{b \in B} d(a,b)$ and $d(a,b)$ is an (intrinsic) distance between two points on $(D^{2},\partial D^{2}, g)$. We also denote that $d_{\partial D^{2}}(\cdot,\cdot)$ as an intrinsic distance between two points on $\partial D^{2}$, and $N_{h, \partial D^{2}}(A)$ as a $h$-neighborhood in $\partial D^{2}$ of $A \subset \partial D^{2}$ with the distance function $d_{\partial D^{2}}$.
 
 Let $V_{1}(D^{2})$ be the space of rectifiable $1$-varifolds whose support is in $D^{2}$, and $IV_{1}(D^{2})$ be a space of integer rectifiable $1$-varifolds in $D^{2}$; $G_{1}(D^{2})$ be the Grassmannian line bundle over $D^{2}$. We endow the varifold $F$-metric on $V_{1}(D^{2})$ as of 2.1(19)(20) in \cite{P}.

\subsection{Capillary geodesics} Capillary geodesics are critical points of weighted length functional defined on $\mathcal{D}(D^{2})$. For $\Omega \in \mathcal{D}(D^{2})$, we denote $\partial^{b}\Omega$ and $\partial^{i}\Omega$ by varifolds $\partial \Omega \mres \partial D^{2}$ and $\partial \Omega \mres \mathring{D^{2}}$, respectively.
For a fixed capillary contact angle $\theta \in (0,\pi/2)$, we define a \emph{capillary varifold} $\partial^{\theta} \Omega \in V_{1}(D^{2})$ as
\begin{equation*}
    \partial^{\theta} \Omega = \partial^{i} \Omega + \cos \theta \cdot \partial^{b} \Omega.
\end{equation*}
Let us call $\partial^{i}\Omega$ by \emph{interior varifold} and $\partial^{b}\Omega$ by \emph{boundary varifold}. Then we define a capillary length functional $L^{\theta}(\Omega)$ on $\mathcal{D}(D^{2})$ as
\begin{equation}
    L^{\theta}(\Omega) =|\partial^{\theta}\Omega| =  |\partial^{i}\Omega| + \cos \theta \cdot |\partial^{b}\Omega|.
\end{equation}

Let $p_{1}$ and $p_{2}$ be two endpoints of $\gamma = \partial_{rel}(\Omega)$ on $\partial D^{2}$. Now we define $\gamma$ as a \emph{capillary embedded geodesic with a contact angle} $\theta \in (0,\pi/2)$ if it is an embedded geodesic and $\langle \eta , \overline{\nu} \rangle = - \cos \theta$ at $p_{1}$ and $p_{2}$. By calculations of the first variation (see 16.2 of \cite{S}), the first variation formula of $L^{\theta}$ for $X \in \mathfrak{X}_{tan}(D^{2})$ holds:
\begin{equation}
    \partial^{\theta}\Omega(X) = - \int_{\gamma} X \cdot H ds + \langle  X , (\eta + \cos \theta \overline{\nu} )\rangle|_{p_{1}} + \langle  X , (\eta + \cos \theta \overline{\nu} )\rangle|_{p_{2}},
\end{equation}
where $H$ is a mean curvature vector of $\gamma$. We call a varifold $V \in V_{1}(D^{2})$ as a stationary varifold if $\partial V(X)=0$ for any $X \in \mathfrak{X}_{tan}(D^{2})$. (2) gives the relation between stationary varifold and capillary geodesics:
\begin{prop} 
For $\Omega \in \mathcal{D}(D^{2})$, a capillary varifold $\partial^{\theta}\Omega$ is stationary if and only if $\gamma := \partial_{rel}(\Omega)$ is a capillary geodesic with contact angle $\theta$.
\end{prop}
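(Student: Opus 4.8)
The statement is an if-and-only-if characterization, and the natural strategy is to read it off from the first variation formula~(2), which has already been derived. The plan is to split into the two implications and in each case analyze the two terms on the right-hand side of~(2) separately: the interior integral $-\int_{\gamma} X \cdot H\, ds$ and the two boundary terms $\langle X, \eta + \cos\theta\, \overline{\nu}\rangle$ evaluated at the endpoints $p_1, p_2$. The key point is that $\mathfrak{X}_{tan}(D^2)$ is large enough to probe each term independently: one can find vector fields supported near an interior point of $\gamma$ pointing in any direction, and (crucially) vector fields tangent to $\partial D^2$ along $\partial D^2$ that take prescribed values at $p_1$ or $p_2$ within the tangent line $T_{p_i}\partial D^2$.

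First I would prove the easy direction: if $\gamma = \partial_{rel}(\Omega)$ is a capillary geodesic with contact angle $\theta$, then $\partial^\theta \Omega$ is stationary. Being a geodesic means $H \equiv 0$ along $\gamma$, so the interior integral in~(2) vanishes for every $X$. For the boundary terms, decompose $X(p_i)$ into its component along $\overline{\nu}$ (which lies in $T_{p_i}\partial D^2$, since $X$ is tangent to $\partial D^2$ there, and $\overline{\nu}$ spans that line) — wait, more carefully: at $p_i \in \partial D^2$, the vector $X(p_i)$ is tangent to $\partial D^2$, hence a multiple of $\overline{\nu}$, say $X(p_i) = a_i \overline{\nu}$. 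Then $\langle X, \eta + \cos\theta\, \overline{\nu}\rangle|_{p_i} = a_i(\langle \eta, \overline{\nu}\rangle + \cos\theta) = a_i(-\cos\theta + \cos\theta) = 0$ using the capillary condition $\langle \eta, \overline{\nu}\rangle = -\cos\theta$. Hence $\partial^\theta\Omega(X) = 0$ for all $X \in \mathfrak{X}_{tan}(D^2)$, so the capillary varifold is stationary.

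For the converse, suppose $\partial^\theta\Omega(X) = 0$ for all $X \in \mathfrak{X}_{tan}(D^2)$. To conclude $H \equiv 0$ on $\gamma$, I would choose $X$ compactly supported in $\mathring{D}^2$ (hence automatically in $\mathfrak{X}_{tan}(D^2)$ and vanishing at $p_1, p_2$), concentrated near an arbitrary interior point $q \in \gamma \setminus \partial D^2$ and roughly parallel to $H(q)$; then~(2) reduces to $-\int_\gamma X \cdot H\, ds = 0$, and a standard cutoff/localization argument forces $H(q) = 0$. By continuity and density of interior points, $H \equiv 0$, so $\gamma$ is a geodesic. It remains to extract the contact angle condition at $p_1$ and $p_2$: now that $H \equiv 0$, formula~(2) gives $\langle X, \eta + \cos\theta\,\overline{\nu}\rangle|_{p_1} + \langle X, \eta + \cos\theta\,\overline{\nu}\rangle|_{p_2} = 0$ for all $X \in \mathfrak{X}_{tan}(D^2)$. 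Choosing $X$ supported near $p_1$ and away from $p_2$ with $X(p_1) = \overline{\nu}$ (possible since $\overline{\nu} \in T_{p_1}\partial D^2$ and we can extend it to a tangent field), we get $\langle \overline{\nu}, \eta + \cos\theta\,\overline{\nu}\rangle|_{p_1} = \langle \eta, \overline{\nu}\rangle|_{p_1} + \cos\theta = 0$, i.e. $\langle \eta, \overline{\nu}\rangle|_{p_1} = -\cos\theta$; similarly at $p_2$. Thus $\gamma$ is a capillary geodesic with contact angle $\theta$.

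The only mildly delicate point — the ``main obstacle,'' though it is routine — is constructing the test vector fields in $\mathfrak{X}_{tan}(D^2)$ with prescribed boundary values at a single endpoint: one needs a smooth vector field on $D^2$, tangent to $\partial D^2$ along all of $\partial D^2$, supported in a small neighborhood of $p_1$, and equal to $\overline{\nu}$ at $p_1$. This is handled by taking a boundary-adapted coordinate chart near $p_1$ (Fermi coordinates along $\partial D^2$), writing down the desired field explicitly in those coordinates with a cutoff, and checking tangency; since $\overline{\nu}$ at $p_1$ lies in $T_{p_1}\partial D^2$ this causes no conflict with the tangency constraint. I would state this as a one-line remark rather than belabor it. With these test fields in hand, both implications follow directly from~(2).
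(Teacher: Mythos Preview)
Your proposal is correct and follows exactly the approach the paper intends: the paper does not spell out a proof but simply records the proposition as an immediate consequence of the first variation formula~(2), and your argument is precisely the standard way to read both implications off from that formula by testing against compactly supported interior fields and against tangent fields localized near each endpoint.
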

Moreover, we prove the following lemma which will be useful later (cf. Corollary 1.3 in \cite{LZZ}).
\begin{prop} Let $\mathcal{S}$ be a compact subset of $\mathcal{D}(D^{2})$ in the $F$-distance metric sense of their associated capillary varifolds. Then for every $\epsilon>0$, there exists $\delta>0$ such that for every $\Omega' \in \mathcal{D}(D^{2})$ and $\Omega \in \mathcal{S}$ 
\begin{equation*}
    F(\partial ^{\theta} \Omega',\partial ^{\theta} \Omega) < \delta \Rightarrow \,\, F(\partial ^{i} \Omega',\partial ^{i} \Omega) < \epsilon.
\end{equation*}
\begin{proof}
For a fixed $\Omega \in \mathcal{S}$, it holds since for a sequence $\{ \Omega_{i} \}_{i \in \mathbb{N}}$ of $\Omega_{i} \in \mathcal{D}(D^{2})$, $\lim_{i \rightarrow \infty} F(\partial^{\theta}\Omega_{i}, \partial^{\theta}\Omega)=0$ holds if and only if $\lim_{i \rightarrow \infty} F(\partial^{i}\Omega_{i}, \partial^{i}\Omega)=0$. The compactness property completes the proof. 
\end{proof}
\end{prop}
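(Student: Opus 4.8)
The plan is to reduce the statement to the elementary fact about the $F$-metric that the interior and capillary varifolds of a simple domain determine each other continuously, and then upgrade the resulting fixed-$\Omega$ continuity to a uniform statement over the compact set $\mathcal{S}$ by a contradiction/compactness argument. First I would record the pointwise (in $\Omega$) claim: for a fixed $\Omega \in \mathcal{S}$, if $\Omega_i \in \mathcal{D}(D^2)$ satisfies $F(\partial^\theta \Omega_i, \partial^\theta \Omega) \to 0$, then $F(\partial^i \Omega_i, \partial^i \Omega) \to 0$. This is exactly the content cited from \cite{LZZ}, Corollary 1.3: since $\partial^\theta \Omega = \partial^i \Omega + \cos\theta \cdot \partial^b \Omega$ with $\partial^i \Omega$ supported in $\mathring{D}^2$ and $\partial^b \Omega$ supported on $\partial D^2$, the mass can be split according to the disjoint (up to the two endpoints, which carry no mass) supports, so convergence of the weighted sum forces convergence of each piece; in particular the reverse implication also holds, giving the ``if and only if'' used in the displayed line of the proof.

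Next I would argue the uniform statement by contradiction. Suppose the conclusion fails: there is $\epsilon_0 > 0$ and sequences $\Omega_j \in \mathcal{S}$, $\Omega_j' \in \mathcal{D}(D^2)$ with $F(\partial^\theta \Omega_j', \partial^\theta \Omega_j) < 1/j$ but $F(\partial^i \Omega_j', \partial^i \Omega_j) \ge \epsilon_0$. By compactness of $\mathcal{S}$ in the $F$-metric of capillary varifolds, after passing to a subsequence we may assume $\partial^\theta \Omega_j \to \partial^\theta \Omega$ in $F$-distance for some $\Omega \in \mathcal{S}$. Then by the triangle inequality $F(\partial^\theta \Omega_j', \partial^\theta \Omega) \le F(\partial^\theta \Omega_j', \partial^\theta \Omega_j) + F(\partial^\theta \Omega_j, \partial^\theta \Omega) \to 0$. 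Applying the fixed-$\Omega$ claim of the previous paragraph to both sequences $\{\Omega_j'\}$ and $\{\Omega_j\}$ gives $F(\partial^i \Omega_j', \partial^i \Omega) \to 0$ and $F(\partial^i \Omega_j, \partial^i \Omega) \to 0$, hence $F(\partial^i \Omega_j', \partial^i \Omega_j) \to 0$ by the triangle inequality, contradicting the lower bound $\epsilon_0$. This proves the existence of the required $\delta = \delta(\epsilon, \mathcal{S}, \theta) > 0$.

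The only genuine content is the mass-splitting in the first paragraph, so I expect the main obstacle to be purely bookkeeping: one must be careful that the $F$-metric (the metric of 2.1(19)(20) in \cite{P}, which metrizes weak-$*$ convergence on the unit ball of mass) indeed lets one decouple the interior and boundary contributions. Since $\partial^i \Omega$ is carried by $\mathring{D}^2$ and $\partial^b \Omega$ by $\partial D^2$, a weak limit of $\partial^\theta \Omega_i = \partial^i \Omega_i + \cos\theta \, \partial^b\Omega_i$ restricts to give the weak limit of $\partial^i \Omega_i$ on any compact subset of $\mathring{D}^2$ and, together with the fixed total mass, controls the boundary part as well; lower semicontinuity of mass plus the constraint that the limit is again a capillary varifold of an element of $\mathcal{D}(D^2)$ (which is where one uses that $\mathcal{S} \subset \mathcal{D}(D^2)$ and $\cos\theta \in (0,1)$ prevents mass from being ``hidden'' on the boundary) then upgrades this to $F$-convergence of $\partial^i \Omega_i$. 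Everything after that is the soft compactness argument above, which requires no new ideas.
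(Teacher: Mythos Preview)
Your proposal is correct and follows essentially the same two-step approach as the paper: first the pointwise equivalence $F(\partial^{\theta}\Omega_i,\partial^{\theta}\Omega)\to 0 \Leftrightarrow F(\partial^{i}\Omega_i,\partial^{i}\Omega)\to 0$ for a fixed $\Omega$, then an upgrade to uniformity over $\mathcal{S}$ via compactness. The paper asserts both steps in one sentence each, whereas you have spelled out the compactness step as an explicit contradiction argument and supplied a justification (disjoint supports of $\partial^{i}\Omega$ and $\partial^{b}\Omega$, with $\cos\theta\in(0,1)$ preventing mass from hiding on $\partial D^{2}$) for the pointwise equivalence that the paper leaves as a citation to \cite{LZZ}.
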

\subsection{Second variation formula and Morse Index} For $X \in \mathfrak{X}_{tan}(D^{2})$, let us denote $f = \langle X, \nu \rangle$. For a capillary embedded geodesic $\gamma$ which is induced by $\Omega \in \mathcal{D}(D^{2})$ and $f \in C^{\infty}(\gamma)$, by Section 2.1 of \cite{HS}, we have a second variation formula of $L^{\theta}$ as following:
\begin{equation}
    Q(f,f) := \partial^{2} L^{\theta}|_{\Omega}(X,X) = \int_{\gamma} (|\nabla_{\gamma}f|^{2} - K f^{2}) ds - \frac{1}{\sin \theta}(\kappa(p_{1})f^{2}(p_{1}) + \kappa(p_{2})f^{2}(p_{2})),
\end{equation}
where $\kappa$ is a geodesic curvature of $\partial D^{2}$. We call a capillary geodesic $\gamma$ is stable if $Q(f,f)$ is nonnegative for all $f \in C^{\infty}(\gamma)$, and we define Morse index of $\gamma$ to be a maximal dimension of a subspace of $C^{\infty}(\gamma)$ where $Q$ is negative definite. i.e. the number of negative eigenvalues of the stability operator associated with $Q$.
\subsection{Critical simple geodesic lassos} On $(D^{2},\partial D^{2},g)$, we define a (smooth) closed curve $\gamma: [0,L] \rightarrow D^{2}$ to be a \emph{simple geodesic lasso} if $\gamma(0) = \gamma(L) \in \partial D^{2}$ and $\gamma((0,L))$ is an (unit-speed parametrized) embedded geodesic, where $L$ is a length of $|\gamma|$. In particular, we call a simple geodesic lasso $\gamma$ to be a \emph{critical geodesic lasso} if
\begin{equation}
    \langle \gamma'(0) - \gamma'(L), v \rangle = 0
\end{equation}
for any $v \in T_{\gamma(0)}\partial D^{2}$. Note that (4) provides a geometric description of a critical geodesic lasso that $\gamma$ is a geodesic without self-intersection point but at endpoints intersecting with the same angle with tangent vector of $\partial D^{2}$ at the endpoint. 

Let us denote (nonparametrized) a domain $\Omega \subset D^{2}$ such that $\partial_{rel}(\Omega) = \gamma$ for some critical geodesic lasso $\gamma$ by a \emph{crtical lasso domain}. Even though $\Omega$ may not be in the set of simple domains $\mathcal{D}(D^{2})$, we define capillary varifolds of $\Omega$ correspondingly in such cases. Note that $supp(\partial^{b} \Omega) = \emptyset \text{ or } \partial M$ in this case. We also denote $\Lambda_{las,a}$ as a set of critical lasso domains $\Omega$ such that $L^{\theta}(\Omega) \le a$. Note that the first variation formula (2) still can be applied to $\partial^{\theta}\Omega$ after idntifying endpoints. By combining the definition and the first variation formula (2), we obtain the following:
\begin{prop}
If $\Omega$ is a critial lasso domain, then a corresponding capillary varifold $\partial^{\theta} \Omega$ is a stationary varifold.
    \begin{proof}
    Let us denote a critical geodesic lasso and its vertex by $\gamma$ and $p$, respectively. We parametrize $\gamma$ by its length $L$. Then by considering boundary terms of the first variation of $\partial^{\theta} \Omega$ in (2), we obtain the first variation formula of $\partial^{\theta} \Omega$ as
    \begin{equation*}
        \partial^{\theta}\Omega(X) = - \int_{\gamma} X \cdot H ds + \langle  X , -\gamma'(0)+\gamma'(L) \rangle|_{p}.
    \end{equation*}
     Since $\gamma$ is a geodesic, the interior term vanishes and the boundary term vanishes by (4). Hence $\partial^{\theta}\Omega(X) = 0$ for any $X \in \mathcal{X}_{tan}(D^{2})$ and $\partial^{\theta}\Omega$ is a stationary varifold.
    \end{proof}
\end{prop}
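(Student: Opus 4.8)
The plan is to adapt the first variation computation behind formula (2) to the lasso case and then check that every resulting term vanishes. First I would fix a vertex $p \in \partial D^{2}$ of the critical geodesic lasso $\gamma = \partial_{rel}(\Omega)$ and parametrize $\gamma \colon [0,L] \to D^{2}$ by arclength with $\gamma(0) = \gamma(L) = p$. The structural point is that, since $\gamma$ is an embedded loop based at the single boundary point $p$, the interior varifold $\partial^{i}\Omega$ is the varifold of $\gamma$ and its only ``endpoint'' is $p$, where $\gamma$ limits from the two directions $\gamma'(0)$ and $\gamma'(L)$; the outward conormals of $\partial^{i}\Omega$ at $p$ are then $-\gamma'(0)$ and $\gamma'(L)$. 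Running the computation that produces (2), but with the two endpoints $p_{1},p_{2}$ collapsed to the single point $p$, I would obtain, for every $X \in \mathfrak{X}_{tan}(D^{2})$,
\begin{equation*}
    \partial^{\theta}\Omega(X) = -\int_{\gamma} X \cdot H \, ds + \langle X, -\gamma'(0) + \gamma'(L) \rangle|_{p} + \cos\theta \cdot \partial^{b}\Omega(X).
\end{equation*}

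Next I would dispose of the three terms. Since $\gamma$ is a geodesic, $H \equiv 0$ along $\gamma$, so the interior integral drops out. For the boundary term, in the lasso case $\mathrm{supp}(\partial^{b}\Omega)$ is either empty or all of $\partial D^{2}$: if empty, $\partial^{b}\Omega(X) = 0$ trivially; if it is the whole circle $\partial D^{2}$, then $\partial^{b}\Omega$ is a closed curve with no boundary, and because $X \in \mathfrak{X}_{tan}(D^{2})$ its flow restricts to diffeomorphisms of $\partial D^{2}$, which preserve the length of $\partial D^{2}$, so again $\partial^{b}\Omega(X) = 0$. Finally, at $p$ one has $X|_{p} \in T_{p}\partial D^{2}$, so the critical lasso condition (4) with $v = X|_{p}$ gives $\langle X, -\gamma'(0)+\gamma'(L) \rangle|_{p} = -\langle X|_{p}, \gamma'(0) - \gamma'(L) \rangle = 0$. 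Combining, $\partial^{\theta}\Omega(X) = 0$ for all $X \in \mathfrak{X}_{tan}(D^{2})$, which is exactly the definition of stationarity.

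The one step that needs care — and the only real content beyond bookkeeping — is justifying the passage from formula (2), which was established for genuine simple domains whose relative boundary is an embedded interval, to the present setting where $\Omega$ need not lie in $\mathcal{D}(D^{2})$ and $\partial_{rel}(\Omega)$ is a loop. I would argue locally: on a small half-ball $\tilde{B}_{r}^{+}(p)$ the varifold $\partial^{\theta}\Omega$ consists of two embedded half-geodesics issuing from $p$, which is precisely the configuration treated in the derivation of (2) after identifying the two endpoints, so the first variation there contributes the sum of the two conormal terms displayed above; away from $p$, $\gamma$ is an interior geodesic contributing only the (vanishing) mean-curvature term, and the arc(s) of $\partial D^{2}$ in the support of $\partial^{b}\Omega$ contribute nothing as explained. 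With this identification in hand the proposition follows at once from the computation above.
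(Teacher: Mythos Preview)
Your proof is correct and follows essentially the same approach as the paper's: both parametrize the lasso by arclength, invoke the first variation computation behind (2) with the two endpoints identified at $p$, and then kill the interior term by $H\equiv 0$ and the point term by the critical lasso condition (4). You are simply more explicit than the paper about the extra $\cos\theta\cdot\partial^{b}\Omega(X)$ contribution and about why (2) carries over to the loop setting; the paper absorbs both of these into the remark preceding the proposition that $\mathrm{supp}(\partial^{b}\Omega)\in\{\emptyset,\partial D^{2}\}$ and that (2) applies after identifying endpoints.
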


\section{Smooth min-max construction of capillary geodesics} In this section, we introduce one and two parameter smooth min-max construction to construct two distinct capillary geodesics (see \cite{CD} for minimal surface version, \cite{CGK} for multi-parameter version for minimal surface, and \cite{DT} for boundary version). 

We also discuss the hypothesis we impose to, and technical parts such as nontriviality of sweepouts and the pull-tight procedure via curve shortening flow with boundary.
\subsection{One and two parameter families}
We take a smooth topology of capillary varifolds we introduced in the last section on $\mathcal{D}(D^{2})$. First we describe a $1$-parameter family of domains. Let us denote $I = [0,1]$ as a unit interval. We call the family of domains $\{ \Phi_{t} \}_{t \in I}$ a $1$-\emph{sweepout} if
\begin{enumerate}
    \item $\Phi_{0} = \emptyset$ and $\Phi_{1} = D^{2}$.
    \item $\Phi_{t}$ converges to $\Phi_{t_{0}}$ in the Hausdorff topology as $t \rightarrow t_{0}$.
    \item $\Phi_{t} \in \mathcal{D}(D^{2})$ for $t \in [0,1]$.
\end{enumerate}

Now we construct a $2$-parameter family of domains. Note that $\partial D^{2}$ is diffeomorphic to a unit circle $S^{1}$ and we fix a counterclockwise orientation on $\partial D^{2}$. Let us define \emph{a pair of endpoints function} $\partial : \mathcal{D}(D^{2}) \setminus \{ \emptyset , D^{2} \} \rightarrow \partial D^{2} \times \partial D^{2}$ as an ordered pair of points which represents the endpoints of $\gamma = \partial_{rel}(\Omega)$ for $\Omega \in \mathcal{D}(D^{2})\setminus \{ \emptyset , D^{2} \}$, where we take the first and second endpoints of $\partial^{b}{\Omega}$ with the order on the counterclockwise orientation on $\partial D^{2}$ as convention. We define $\pi_{1} , \pi_{2}: \partial D^{2} \times \partial D^{2} \rightarrow \partial D^{2}$ as a projection map of $\partial$ to the first and second coordinate, respectively. We define an \emph{endpoint function} $e : \mathcal{D}(D^{2})\setminus \{ \emptyset , D^{2} \} \rightarrow \partial D^{2}$ as
\begin{equation}
    e(\Omega) = \pi_{1} \partial (\Omega), 
\end{equation}
for $\Omega \in \mathcal{D}(D^{2})\setminus \{ \emptyset , D^{2} \}$.

We define the family of domains $\{ \Phi_{s,t} \}_{(s,t) \in S^{1} \times I}$ as a $2$-\emph{sweepout} if
\begin{enumerate}
    \item $\Phi_{s,0} = \emptyset$ and $\Phi_{s,1} = D^{2}$ for any $s \in S^{1}$.
    \item $\Phi_{s,t}$ converges to $\Phi_{s_{0},t_{0}}$ in the Hausdorff topology as $(s,t) \rightarrow (s_{0},t_{0})$.
    \item $\Phi_{s,t} \in \mathcal{D}(D^{2})$ for $(s,t) \in S^{1} \times [0,1]$.
    \item The map $f_{t}: S^{1} \rightarrow \partial D^{2}$ defined by $f_{t}(s) := e(\Phi_{s,t})$ has a mapping degree $1$ for every $t \in (0,1)$.
\end{enumerate}
We denote $\mathcal{P}_{i}$ to be the set of $i$-sweepouts for $i=1,2$.
\begin{rmk}
\begin{enumerate}[wide=0pt, align=left]
\item By the convention to take an order of endpoints and the definition of mapping degree, we obtain that a corresponding second endpoint function in condition (4), namely $\pi_{2} \partial$, also has a mapping degree $1$ for every $t \in (0,1)$.
\item Since the continuity of $\Phi_{s,t}$ in Hausdorff topology implies the continuity of the endpoint function $e$, the mapping degree $1$ property at one point $t \in (0,1)$ in the condition (4) indeed ensures the mapping degree $1$ for whole $t \in (0,1)$.
\item We use the nonzero mapping degree property to prove the nontriviality of $2$-sweepouts, hence fixing a nontrivial mapping degree to define a $2$-sweepout does not affect the two parameter min-max construction.
\end{enumerate}
\end{rmk}
\begin{rmk}
While our $2$-sweepouts of domains do not directly correspond to $2$-parameter families of embedded curves whose boundary points are on $\partial D^{2}$, there is an analogy between two classes. More specifically, certain $2$-sweepouts of domains appear as a double cover of $2$-parameter family of embedded curves with endpoints on $\partial D^{2}$ detecting a second nontrivial (relative) homology in the space of such embedded curves. The covering map is a projection map sending domains to its relative boundary here. 

For instance, on the flat disk $D^{2} = \{ (x,y)\,| \, x^{2}+y^{2}=1 \} \subset \mathbb{R}^{2}$, we denote the lines $\mathcal{L}(a,b,c)$ to be
\begin{equation*}
    \mathcal{L}(a,b,c) : = \{ ax+by=c \} \cap D^{2}
\end{equation*}
and
\begin{equation*}
    \mathcal{L}:= \bigcup_{(a,b,c) \in \mathbb{R}^{3}} \mathcal{L}(a,b,c).
\end{equation*}
Then the $2$-parameter family
\begin{equation*}
    \mathbb{R}P^{2} \rightarrow \mathcal{L}, \, [a,b,c] \mapsto \mathcal{L}(a,b,c)
\end{equation*}
is a $2$-sweepout of curves on $(D^{2},\partial D^{2})$ and its double cover in $\mathcal{D}(D^{2})$ is a $2$-sweepout of domains after proper reparametrization.
\end{rmk}

For $i=1,2$, we define the $i$-\emph{width} $w_{i}(D^{2})$ of sweepouts $\mathcal{P}_{i}$ by
\begin{equation}
    w_{i}(D^{2}) := \inf_{\Phi \in \mathcal{P}_{i}} \sup_{x \in X_{i} } L^{\theta}(\Phi_{x}),
\end{equation}
where $X_{1} = I$ and $X_{2} = S^{1} \times I$. By the definition of $k$-sweepouts, we have $w_{1}(D^{2}) \le w_{2}(D^{2})$.

We denote the sequence of $k$-sweepout $\{ \Phi^{j} \}$ to be a \emph{minimizing sequence} if $\lim_{j \rightarrow \infty} \sup _{x \in X_{i}} L^{\theta}(\Phi^{j}_{x}) = w_{i}$. For a minimizing sequence $\{ \Phi^{j} \}$ and for some sequence of parameters $\{x_{j} \}$, if $\lim_{j \rightarrow \infty} L^{\theta}( \Phi^{j}_{x_{j}} ) = w_{i}$ then we call $\{ \Phi^{j}_{x_{j}} \}$ as a \emph{min-max sequence}. Moreover, we define $\Lambda_{e,w_{k}}$ to be the set of capillary varifolds $\partial^{\theta} \Omega$ such that $L^{\theta}(\Omega)= w_{k}$, and $\partial_{rel} (\Omega)$ is a multiplicity one embedded geodesic whose both endpoints are on $\partial D^{2}$.

We define $\Lambda_{\theta,\infty,a}$ to be the set of stationary capillary varifolds $V$ such that $|V| \le a$  with respect to $\mathfrak{X}_{tan}(D^{2})$. Moreover, we define $\Lambda_{\theta,w_{k}}$ to be a set of capillary varifolds $\partial^{\theta} \Omega$ whose $L^{\theta}$-functional achieves value $w_{k}$ and relative boundary is a multiplicity one embedded geodesic whose both endpoints are on $\partial D^{2}$, and both contact angles at endpoints are $\theta$. Note that $\Lambda_{e,w_{k}} \cap \Lambda_{\theta,\infty,w_{k}}= \Lambda_{\theta,w_{k}}$ if $\Lambda_{las,w_{k}} = \emptyset$. We denote $\Lambda_{\theta,w_{k},j}$ and $\Lambda^{j}_{\theta,w_{k}}$ by subsets of $\Lambda_{\theta,w_{k}}$ consisting of capillary varifolds whose Morse index less than or equal to $j$ or larger than or equal to $j$, respectively. We denote $\mathcal{D}_{\theta,w_{k}}$ as the corresponding set of domains of the set of capillary varifolds $\Lambda_{\theta,w_{k}}$. Let the \emph{critical set} $\Lambda(\{ \Phi_{j} \})$ be a set of stationary capillary varifolds can be obtained by the limit of min-max sequence induced by  $\{ \Phi_{j} \}$.

We can also deduce that $w_{k} > \cos \theta |\partial D^{2}|$ so that both widths are achieved by nontrivial stationary varifolds. Here we follow Section 2.3 of \cite{LZZ} by taking constant curvature $c=0$ there.

\begin{prop} [Theorem 2.10 in \cite{LZZ}]
For a fixed $\theta \in (0,\pi/2)$ and $k=1,2$, $\cos \theta |\partial D^{2}| < w_{k}$.
\end{prop}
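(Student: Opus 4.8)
The plan is to reduce everything to a single geometric estimate at one well-chosen slice of a sweepout. The starting point is that the two ``trivial'' ends of any sweepout have $L^{\theta}$-values $L^{\theta}(\emptyset)=0$ and $L^{\theta}(D^{2})=\cos\theta\,|\partial D^{2}|$ (for $\Omega=D^{2}$ one has $\partial^{i}\Omega=0$ and $\partial^{b}\Omega=\partial D^{2}$), so the statement asserts that both widths lie strictly above the value at the full‑disk end. Since a slice $\{\Phi_{s,\cdot}\}$ of any $2$-sweepout satisfies the defining conditions of a $1$-sweepout, $w_{1}\le w_{2}$ (as already noted), and it suffices to treat $k=1$. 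I would then aim to produce a constant $\epsilon_{0}=\epsilon_{0}(D^{2},\partial D^{2},g,\theta)>0$ such that \emph{every} $\Phi\in\mathcal{P}_{1}$ has a slice $\Phi_{t^{*}}$ with $L^{\theta}(\Phi_{t^{*}})\ge\cos\theta\,|\partial D^{2}|+\epsilon_{0}$; taking the infimum over $\Phi$ then gives $w_{1}\ge\cos\theta\,|\partial D^{2}|+\epsilon_{0}$. This is the scheme of Section~2.3 of \cite{LZZ} with $c=0$, and I would follow it.

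To locate the slice, I would fix a small $A_{0}>0$ (depending only on $g$ and $\theta$, chosen at the end) and use that $t\mapsto|\Phi_{t}|$ is continuous with $|\Phi_{0}|=0$ and $|\Phi_{1}|=|D^{2}|$ to pick $t^{*}$ with $|D^{2}\setminus\Phi_{t^{*}}|=A_{0}$. Writing $\Omega=\Phi_{t^{*}}$, $\gamma=\partial^{i}\Omega$, $R=D^{2}\setminus\Omega$ (so $|R|=A_{0}$), and letting $A_{2}$ be the boundary arc $\partial D^{2}\setminus\mathrm{supp}(\partial^{b}\Omega)$ of $R$, whose endpoints are exactly the endpoints of $\gamma$, one has $|\partial^{b}\Omega|+|A_{2}|=|\partial D^{2}|$ and hence
\begin{equation*}
    L^{\theta}(\Omega)-\cos\theta\,|\partial D^{2}|=|\gamma|-\cos\theta\,|A_{2}|.
\end{equation*}
Thus the problem reduces to showing $|\gamma|-\cos\theta\,|A_{2}|\ge\epsilon_{0}$, which I would obtain by a dichotomy on $|A_{2}|$. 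If $|A_{2}|\ge C_{1}A_{0}$, then the topological disk $R$ has small area $A_{0}$ yet spans the convex arc $A_{2}$, so it is a thin region over $A_{2}$; in Fermi coordinates along $\partial D^{2}$, using the bound $\kappa\le\kappa_{\max}:=\max_{\partial D^{2}}\kappa<\infty$, one gets $|\gamma|\ge|A_{2}|-C_{2}A_{0}$, whence $|\gamma|-\cos\theta\,|A_{2}|\ge(1-\cos\theta)|A_{2}|-C_{2}A_{0}\ge\tfrac12(1-\cos\theta)C_{1}A_{0}$ once $C_{1}\ge 2C_{2}/(1-\cos\theta)$. If instead $|A_{2}|<C_{1}A_{0}$, the relative isoperimetric inequality on $(D^{2},g)$ gives $A_{0}=|R|\le C_{3}|\gamma|^{2}$, i.e.\ $|\gamma|\ge c_{0}\sqrt{A_{0}}$, while $\cos\theta\,|A_{2}|<\cos\theta\,C_{1}A_{0}$, so $|\gamma|-\cos\theta\,|A_{2}|\ge c_{0}\sqrt{A_{0}}-\cos\theta\,C_{1}A_{0}\ge\tfrac12 c_{0}\sqrt{A_{0}}$ for $A_{0}$ small. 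Choosing $A_{0}$ small and $\epsilon_{0}:=\min\{\tfrac12(1-\cos\theta)C_{1}A_{0},\,\tfrac12 c_{0}\sqrt{A_{0}}\}>0$ finishes the argument, and $w_{2}\ge w_{1}\ge\cos\theta\,|\partial D^{2}|+\epsilon_{0}$.

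The step I expect to need the most care — and the reason to invoke \cite{LZZ} rather than reprove it — is the legitimacy of the slicing: that $t\mapsto|\Phi_{t}|$ is genuinely continuous within the smooth topology of capillary varifolds on $\mathcal{D}(D^{2})$ (equivalently, that along a minimizing sequence, with its uniform $L^{\theta}$-bound, there are no area jumps), together with verifying that in the first case of the dichotomy the estimate $|\gamma|\ge|A_{2}|-C_{2}A_{0}$ survives when $R$ is not a clean thin strip — the point being that deep thin tentacles or localized bulges of $R$ only lengthen $\gamma$ relative to $A_{2}$, so they can only help. An alternative I would keep in reserve is to slice instead at a prescribed small value of the complementary boundary‑arc length $\rho(t)=|\partial D^{2}|-|\partial^{b}\Phi_{t}|$ and to use the sharper convexity comparison $|\gamma|\ge d_{D^{2}}(p_{1},p_{2})\ge\rho(t)\bigl(1-C\rho(t)^{2}\bigr)$ at the endpoints $p_{1},p_{2}$ of $\gamma$; this makes the role of the (strict) convexity of $\partial D^{2}$ most transparent but trades the area‑continuity issue for a lower‑semicontinuity‑of‑length issue along the sweepout.
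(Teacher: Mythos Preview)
The paper does not supply its own argument for this proposition: it is stated as a direct citation of Theorem~2.10 in \cite{LZZ} (specialized to $c=0$), with the surrounding text simply pointing to Section~2.3 of that reference. Your proposal reconstructs precisely that scheme --- the reduction to $k=1$, the identity $L^{\theta}(\Omega)-\cos\theta\,|\partial D^{2}|=|\gamma|-\cos\theta\,|A_{2}|$, and the area-slice dichotomy combining a Fermi-coordinate comparison with the relative isoperimetric inequality --- so you are fully aligned with the paper's (outsourced) proof; for your flagged Case~1 estimate, note that integrating $\mathrm{div}\,X$ over $R$ for a smooth vector field $X$ equal to the inward unit normal near $\partial D^{2}$ and cut off in the interior gives $|\gamma|\ge|A_{2}|-C|R|$ without any graphicality assumption.
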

\subsection{Hypothesis}
Let $(D^{2},\partial D^{2},g)$ be a Riemannian disk with a strictly convex boundary. By Proposition 2.2, capillary varifolds of critical lasso domains are critical varifolds so that $\Lambda_{las,a} \subseteq \Lambda_{\theta,\infty,a}$ for any $a \in \mathbb{R}_{+}$. Now we assume the following to avoid detecting a critical geodesic lasso appearing as min-max limit capillary varifolds:
\begin{equation*}
    \Lambda_{las,2w_{2}} = \emptyset. \tag{$\star$}
\end{equation*}
Saying in more detail, there is no criticial lasso whose length is bounded by $2 w_{2}$. 
\subsection{Pulling-tight toward embedded capillary geodesics}

In this section, we discuss the pull-tight process to accumulate capillary varifolds whose $L^{\theta}$-functional is close enough to the widths $w_{k}$ toward capillary embedded geodesics. Our pull-tight procedure is two fold. The first procedure is to accumulate toward embedded geodesics, and the second one is to accumulate towards stationary capillary varifolds so that toward capillary embedded geodesics. Moreover, we obtain the Morse index upper bound of limit capillary embedded geodesics.

We sketch the first procedure by applying convergence and regularity of curve shortening flow with fixed endpoints in surfaces with strictly convex boundary (in Appendix B) à la Edelen \cite{E}. This is a boundary analog of Grayson's theorem \cite{G}. We obtain the boundary version of Theorem 2.1 in \cite{K} as follows.

\begin{thm}
Suppose $(D^{2},\partial D^{2},g)$ to be a smooth Riemannian $2$-disk with strictly convex boundary satisfying the hypothesis $(\star)$. For $k=1,2$, for any minimizing sequence $\{ \Phi^{j} \}$ of $k$-sweepouts, there is a deformed minimizing sequence $\{ \hat{\Phi}^{j} \}$ of $\{ \Phi^{j} \}$ satisfying the following property. For any $s>0$, there is some $0<a<w_{k}$ satisfying
    \begin{equation}
        \{ \partial^{\theta}\hat{\Phi}^{j}_{x} \in V_{1}(D^{2}) : L^{\theta}(\hat{\Phi}^{j}_{x}) \ge w_{k}-a \} \subset \bigcup_{\gamma \in \Lambda_{e,w_{k}}} B^{F}_{s}(\gamma)
        \end{equation}
for all sufficiently large $j$, where $B^{F}_{s}(\gamma)$ is a $F$-metric ball with the center $\gamma$. 
\begin{proof}

By the compactness theorem of embedded geodesics with endpoints on boundary (Theorem A.1), we have compactness of $\Lambda_{e,w_{k}}$ and $F$-distance function $F(\cdot,\Lambda_{e,w_{k}})$ is well-defined.

Due to Theorem B.1, the regularity and smoothness of curve shortening flow with fixed boundary condition, curves converges to geodesic segments as time goes to infinity by the curve shortening flow with convex boundary. Note that the length of curves monotonically decrease by flow, and so $L^{\theta}$-functional also monotonically decreases.

For a given relative boundary of domain $\alpha$ whose both endpoints are on $\partial D^{2}$, let us denote $H(t,\alpha)$ as a curve deformed by curve shortening flow with boundary by time $t$. For any time $t \in [0,\infty)$, $H(t,\alpha)$ is an embedded curve if $\alpha$ is embedded. We define the tightened domain $\hat{\Phi}^{j}_{x} \in \mathcal{D}(D^{2})$ by flow by taking a sequence $t_{j} \rightarrow \infty$ and domain $\hat{\Phi}^{j}_{x}$ such that
\begin{align*}\partial^{i} \hat{\Phi}^{j}_{x}  &= H(t_{j},\partial ^{i} \Phi^{j}_{x}) \\ \partial^{b} \hat{\Phi}^{j}_{x}  &= \partial ^{b} \Phi^{j}_{x},
\end{align*}
for $x \in X_{k}$. Note that $\hat{\Phi}^{j}$ is well-defined by the continuity and regularity property of curve shortening flow with boundary of curves with finite length. Let us argue by contradiction.

Suppose there is a sequence of $\{ \hat{\Phi}^{j_{k}}_{x_{j_{k}}} \}_{k \in \mathbb{N}}$ such that $L^{\theta}(\hat{\Phi}^{j_{k}}_{x_{j_{k}}}) \ge w_{k} - a_{j}$ and $F(\partial^{\theta}\hat{\Phi}^{j_{k}}_{x_{j_{k}}},\Lambda_{e, w_{k}}) \ge s$ and $a_{j} \rightarrow 0$. Notice that $L^{\theta}(\hat{\Phi}^{j_{k}}_{x_{j_{k}}}) - L^{\theta}(\Phi^{j_{k}}_{x_{j_{k}}}) \rightarrow 0$ as $k \rightarrow \infty$ by the convergence of the length. By monotone decreasing property of $L^{\theta}$-functional through curve shortening flow with boundary, $\{ \partial^{\theta}\Phi^{j_{k}}_{x_{j_{k}}} \}$ converges to elements of $\Lambda_{e,w_{k}}$ and $\{ \partial^{\theta}\hat{\Phi}^{j_{k}}_{x_{j_{k}}} \}$ as well. This contradicts to our assumption $F(\partial^{\theta}\hat{\Phi}^{j_{k}}_{x_{j_{k}}},\Lambda_{e, w_{k}}) \ge s$. Also note that a crticial geodesic lasso does not appear as a limit geodesic by the hypothesis $(\star)$.
\end{proof}
\end{thm}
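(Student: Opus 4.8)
The plan is to pull each domain of a given minimizing sequence tight by running the curve shortening flow with fixed endpoints — the flow analysed in Appendix B, following Edelen \cite{E} — on the \emph{interior} part $\partial^{i}\Phi^{j}_{x}$ alone, while leaving the boundary part $\partial^{b}\Phi^{j}_{x}$ frozen. Concretely, writing $H(t,\cdot)$ for this flow and fixing a time sequence $t_{j}\to\infty$, I set $\partial^{i}\hat\Phi^{j}_{x}=H(t_{j},\partial^{i}\Phi^{j}_{x})$ and $\partial^{b}\hat\Phi^{j}_{x}=\partial^{b}\Phi^{j}_{x}$. On a disk with strictly convex boundary this flow exists for all time, keeps the arc embedded and inside $D^{2}$ with interior in $\mathring{D^{2}}$, and never moves the two endpoints on $\partial D^{2}$ (Theorem B.1), so each $\hat\Phi^{j}_{x}\in\mathcal D(D^{2})$; moreover arcs whose length tends to $0$ evolve to arcs whose length tends to $0$, so the family stays continuous up to the degenerate parameters $t=0,1$.

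First I would check that $\{\hat\Phi^{j}\}$ is again a minimizing sequence of $k$-sweepouts. The continuous (indeed smooth) dependence of the flow on the initial arc, together with the previous paragraph, gives continuity of $(s,t)\mapsto\hat\Phi^{j}_{s,t}$ in the Hausdorff topology and the endpoint conditions $\hat\Phi^{j}_{s,0}=\emptyset$, $\hat\Phi^{j}_{s,1}=D^{2}$. Since the flow never moves the two endpoints, the endpoint map $s\mapsto e(\hat\Phi^{j}_{s,t})$ coincides with that of $\Phi^{j}$, so the degree-one requirement in the definition of a $2$-sweepout is automatically inherited; hence $\hat\Phi^{j}\in\mathcal P_{k}$ for $k=1,2$. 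As the length of $\partial^{i}\Phi^{j}_{x}$ is non-increasing along the flow and $\partial^{b}\Phi^{j}_{x}$ is untouched, $L^{\theta}(\hat\Phi^{j}_{x})\le L^{\theta}(\Phi^{j}_{x})$ for every $x$; combined with $\sup_{x}L^{\theta}(\hat\Phi^{j}_{x})\ge w_{k}$ (valid since $\hat\Phi^{j}$ is a $k$-sweepout) and $\sup_{x}L^{\theta}(\Phi^{j}_{x})\to w_{k}$, the sequence $\{\hat\Phi^{j}\}$ is minimizing.

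Next I would establish the claimed inclusion by contradiction. By the compactness theorem for embedded geodesics with endpoints on $\partial D^{2}$ (Theorem A.1), $\Lambda_{e,w_{k}}$ is compact in the $F$-metric, so $F(\,\cdot\,,\Lambda_{e,w_{k}})$ is well defined. If the inclusion failed for $\{\hat\Phi^{j}\}$, there would exist $s>0$, a subsequence $j_{\ell}\to\infty$, a null sequence $a_{\ell}\to0$ and parameters $x_{\ell}$ with $L^{\theta}(\hat\Phi^{j_{\ell}}_{x_{\ell}})\ge w_{k}-a_{\ell}$ yet $F(\partial^{\theta}\hat\Phi^{j_{\ell}}_{x_{\ell}},\Lambda_{e,w_{k}})\ge s$. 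From $w_{k}-a_{\ell}\le L^{\theta}(\hat\Phi^{j_{\ell}}_{x_{\ell}})\le L^{\theta}(\Phi^{j_{\ell}}_{x_{\ell}})\le\sup_{x}L^{\theta}(\Phi^{j_{\ell}}_{x})\to w_{k}$, both $L^{\theta}(\Phi^{j_{\ell}}_{x_{\ell}})$ and $L^{\theta}(\hat\Phi^{j_{\ell}}_{x_{\ell}})$ converge to $w_{k}$, so the length lost by the flow over $[0,t_{j_{\ell}}]$ tends to $0$ although $t_{j_{\ell}}\to\infty$. Using $\tfrac{d}{dt}(\text{length})=-\int\kappa^{2}\,ds$, the long-time convergence of the flow to a geodesic segment with the prescribed endpoints, and interior curvature estimates at large times (Theorem B.1, a boundary analogue of Grayson's theorem \cite{G}), the arcs $\partial^{i}\hat\Phi^{j_{\ell}}_{x_{\ell}}$ are embedded with uniformly bounded length and subconverge to an embedded geodesic segment with both endpoints on $\partial D^{2}$; the hypothesis $(\star)$ rules out the degenerate limit in which the two endpoints coalesce into a critical geodesic lasso. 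Its $L^{\theta}$-value being $w_{k}$, this limit lies in $\Lambda_{e,w_{k}}$ (which in particular is nonempty), contradicting $F(\partial^{\theta}\hat\Phi^{j_{\ell}}_{x_{\ell}},\Lambda_{e,w_{k}})\ge s$; this proves the inclusion.

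The real difficulties lie not in the min-max bookkeeping but in the two inputs from the flow. One must know that the curve shortening flow with a Dirichlet condition on a surface with strictly convex boundary is well posed for all time, stays inside $D^{2}$, preserves embeddedness, and depends continuously enough on the initial arc that the evolved family is still a continuous $k$-sweepout — including a uniform treatment as the arcs shrink to points near $t=0$ and $t=1$ — which is precisely what Appendix B must supply. And one must upgrade ``essentially no length was lost over an arbitrarily long time'' into ``$\partial^{\theta}\hat\Phi^{j_{\ell}}_{x_{\ell}}$ is $F$-close to $\Lambda_{e,w_{k}}$'', which needs the long-time convergence and regularity of the flow together with Theorem A.1, and needs $(\star)$ to exclude geodesic lassos in the subsequential limit; I expect this last point to be the genuine obstacle.
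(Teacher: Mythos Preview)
Your proposal is correct and follows essentially the same approach as the paper: deform each $\partial^{i}\Phi^{j}_{x}$ by the curve shortening flow with fixed endpoints (Theorem B.1) for times $t_{j}\to\infty$ while keeping $\partial^{b}\Phi^{j}_{x}$ fixed, then run the standard contradiction argument using compactness of $\Lambda_{e,w_{k}}$ (Theorem A.1) and the hypothesis $(\star)$ to exclude lasso limits. Your write-up is in fact somewhat more careful than the paper's own proof, in that you explicitly verify the deformed family remains a $k$-sweepout (in particular the degree-one condition, which is immediate since the flow fixes endpoints) and you spell out more precisely why ``almost no length lost over arbitrarily long flow time'' forces $F$-closeness to a geodesic segment.
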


Now we can adopt standard pull-tight arguments of Proposition 4.1 in \cite{CD} by modifying the functional from area functional to $L^{\theta}$-functional, and from $\mathfrak{X}(D^{2})$ to $\mathfrak{X}_{tan}(D^{2})$. We have the pull-tight type theorem as follows:

\begin{thm}
Suppose $(D^{2},\partial D^{2},g)$ is a smooth Riemannian $2$-disk with strictly convex boundary satisfying the hypothesis $(\star)$. For $k=1,2$, for any minimizing sequence $\{ \Phi^{j} \}$ of $k$-sweepouts, there is a deformed minimizing sequence $\{\hat{ \Phi}^{j} \}$ of $\{ \Phi^{j} \}$ satisfying the following property. For any $s>0$, there is some $0<a<w_{k}$ satisfying
    \begin{equation*}
        \{ \partial^{\theta}\hat{\Phi}^{j}_{x} \in V_{1}(D^{2}) : L^{\theta}(\hat{\Phi}^{j}_{x}) \ge w_{k}-a \} \subset \bigcup_{\gamma \in \Lambda_{\theta,w_{k}}} B^{F}_{s}(\gamma)
        \end{equation*}
for all sufficiently large $j$, where $B^{F}_{s}(\gamma)$ is a $F$-metric ball with the center $\gamma$. 
\begin{proof}
By Theorem 3.4, for given minimizing sequence $\{ \Phi^{j} \}$, there is a deformed minimizing sequence $\{ \Psi^{j} \}$ such that for any $s>0$ there is some $0<a'<w_{k}$ satisfying (7):
\begin{equation*}
        \{ \partial^{\theta}\Psi^{j}_{x} \in IV_{1}(D^{2}) : L^{\theta}(\Psi^{j}_{x}) \ge w_{k}-a' \} \subset \bigcup_{\gamma \in \Lambda_{e,w_{k}}} B^{F}_{s}(\gamma).
        \end{equation*}

Now we follow Section 4 in \cite{CD}, modify the functional and change $\mathfrak{X}(M)$ to $\mathfrak{X}_{tan}(D^{2})$. We have a compactness of $\Lambda_{\theta,2w_{2}}$ by Corollary A.2. We construct a tightening map as
\begin{equation*}
    H:[0,1] \times \{\Omega \in \mathcal{D}(D^{2})\, | \, L^{\theta}(\Omega) \le 2 w_{2}\} \rightarrow \{\Omega \in \mathcal{D}(D^{2})\, | \, L^{\theta}(\Omega) \le 2 w_{2}\},
\end{equation*}
satisfying
\begin{enumerate}[label=(\roman*)]
    \item $H(0,\Omega) = \Omega$ for all $\Omega$;
    \item $H(t,\Omega) = \Omega$ if $\partial^{\theta}\Omega \in \Lambda_{\theta,\infty,2w_{2}}$
    \item if $\partial^{\theta}\Omega \notin \Lambda_{\theta,\infty,2w_{2}}$, then
    \begin{equation*}
        L^{\theta}(H(1,\Omega)) - L^{\theta}(\Omega) \le - L(F(\partial^{\theta}\Omega,\Lambda_{\theta,\infty,2w_{2}}))<0,
    \end{equation*}
    where $L:[0,\infty) \rightarrow [0,\infty)$ is a continuous function with $L(0)=0$ and $L(t)>0$ when $t>0$.
\end{enumerate}
Let us define $\hat{\Phi}^{j}_{x} := H(1,\Psi^{j}_{x})$. Then the proof of Proposition 4.1 in \cite{CD} gives that a capillary varifold of every min-max sequence $\{ \hat{\Phi}^{j}_{x_{j}} \}$ of $\{ \hat{\Phi}^{j} \}$ converges to a stationary capillary varifold $V$. Moreover, by the continuity of $H$ and the fact that a stationary varifold $V$ is a fixed point of $H$, and the fact that an interior varifold of every min-max sequence of $\{ \Psi^{j}\}$ indeed converges to an embedded geodesic, we have $V \in \Lambda_{\theta,w_{k}}$ and $|V| = w_{k}$. By the hypothesis $(\star)$, a capillary varifold of critical lasso domain does not appear as a min-max limit capillary varifold, and the proof is completed.
\end{proof}
\end{thm}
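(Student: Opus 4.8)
The plan is to chain together two deformations: the curve-shortening-flow tightening already established in Theorem 3.4, followed by the classical vector-field pull-tight of Colding--De Lellis \cite{CD}, adapted from the area functional to $L^{\theta}$ and from $\mathfrak{X}(D^{2})$ to $\mathfrak{X}_{tan}(D^{2})$. First I would apply Theorem 3.4 to the given minimizing sequence $\{\Phi^{j}\}$ of $k$-sweepouts to obtain a deformed minimizing sequence $\{\Psi^{j}\}$ for which, for every $s>0$, there is $0<a'<w_{k}$ with $\{\partial^{\theta}\Psi^{j}_{x}:L^{\theta}(\Psi^{j}_{x})\ge w_{k}-a'\}\subset\bigcup_{\gamma\in\Lambda_{e,w_{k}}}B^{F}_{s}(\gamma)$ for $j$ large; at this point the almost-maximal slices are $F$-close to multiplicity-one embedded geodesic segments with endpoints on $\partial D^{2}$, but the contact angles need not be $\theta$, so the associated capillary varifolds need not be stationary. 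The remaining task is to tighten further so that the limits become stationary.

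The second step is to build a continuous tightening map
\[
H:[0,1]\times\{\Omega\in\mathcal{D}(D^{2}):L^{\theta}(\Omega)\le 2w_{2}\}\longrightarrow\{\Omega\in\mathcal{D}(D^{2}):L^{\theta}(\Omega)\le 2w_{2}\}
\]
satisfying $H(0,\cdot)=\mathrm{id}$; $H(t,\Omega)=\Omega$ whenever $\partial^{\theta}\Omega\in\Lambda_{\theta,\infty,2w_{2}}$; and $L^{\theta}(H(1,\Omega))-L^{\theta}(\Omega)\le-L\big(F(\partial^{\theta}\Omega,\Lambda_{\theta,\infty,2w_{2}})\big)<0$ otherwise, for a continuous gauge $L$ with $L(0)=0$ and $L>0$ on $(0,\infty)$. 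Following Proposition 4.1 of \cite{CD}, I would use the compactness of $\Lambda_{\theta,\infty,2w_{2}}$ from Corollary A.2 to cover, for each fixed $r>0$, the compact set of capillary varifolds of mass $\le 2w_{2}$ at $F$-distance $\ge r$ from $\Lambda_{\theta,\infty,2w_{2}}$ by finitely many $F$-balls on each of which a single field $X\in\mathfrak{X}_{tan}(D^{2})$ makes the first variation $\partial^{\theta}\Omega(X)$ uniformly negative via formula (2); patching these by a partition of unity produces a locally Lipschitz time-dependent tangent vector field whose flow $H$ deforms domains while keeping them in $\mathcal{D}(D^{2})$ (tangency to $\partial D^{2}$ preserves the relative-boundary structure) and keeping $L^{\theta}\le 2w_{2}$, since $L^{\theta}$ only decreases along the flow. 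Setting $\hat\Phi^{j}_{x}:=H(1,\Psi^{j}_{x})$ gives the desired deformed sequence.

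It then remains to identify the limits. Because $H$ only decreases $L^{\theta}$ and $\{\Psi^{j}\}$ is minimizing, $\{\hat\Phi^{j}\}$ is still minimizing and its min-max sequences still have $L^{\theta}\to w_{k}$. For a min-max sequence $\{\hat\Phi^{j}_{x_{j}}\}$, the three properties of $H$ and its continuity force any subsequential varifold limit $V$ to lie in $\Lambda_{\theta,\infty,w_{k}}$ with $|V|=w_{k}$, exactly as in \cite{CD}: a non-stationary limit would, through the uniform decrease on an $F$-annulus around $\Lambda_{\theta,\infty,2w_{2}}$, contradict minimality. On the other hand, since $\{\Psi^{j}\}$ had already been tightened by Theorem 3.4 and $H$ only lowers $L^{\theta}$, the interior varifolds $\partial^{i}\hat\Phi^{j}_{x_{j}}$ still converge to a multiplicity-one embedded geodesic with endpoints on $\partial D^{2}$ — here Proposition 2.2 lets me pass between $F$-closeness of capillary varifolds and of interior varifolds — so $V\in\Lambda_{e,w_{k}}$. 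Hypothesis $(\star)$ gives $\Lambda_{las,2w_{2}}=\emptyset$, hence $\Lambda_{las,w_{k}}=\emptyset$, and by the identity $\Lambda_{e,w_{k}}\cap\Lambda_{\theta,\infty,w_{k}}=\Lambda_{\theta,w_{k}}$ noted in Section 3 we conclude $V\in\Lambda_{\theta,w_{k}}$. A routine contradiction argument upgrades this to the uniform statement: were the conclusion false, there would be $s>0$, $a_{j}\downarrow 0$, and slices $\hat\Phi^{j}_{x_{j}}$ with $L^{\theta}\ge w_{k}-a_{j}$ and $F(\partial^{\theta}\hat\Phi^{j}_{x_{j}},\Lambda_{\theta,w_{k}})\ge s$, whose limit would contradict $V\in\Lambda_{\theta,w_{k}}$.

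The step I expect to be the main obstacle is the construction of $H$ directly on the space of domains $\mathcal{D}(D^{2})$: one must verify that the tangent-field flow keeps each domain in $\mathcal{D}(D^{2})$ — i.e. that the relative boundary remains a single embedded segment meeting $\partial D^{2}$ only at its two endpoints — that $H$ is jointly continuous in the $F$-topology of capillary varifolds, and that the gauge $L$ can be chosen uniformly, all of which rest on the compactness furnished by Corollary A.2 together with Proposition 2.2. A minor additional point is that composing the curve-shortening deformation of Theorem 3.4 with $H$ still yields a genuine minimizing sequence; this is immediate, since both deformations are $L^{\theta}$-nonincreasing.
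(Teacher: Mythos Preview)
Your proposal is correct and follows essentially the same two-step strategy as the paper: first apply Theorem 3.4 (curve-shortening tightening) to pass to $\{\Psi^{j}\}$, then apply the Colding--De Lellis pull-tight via $\mathfrak{X}_{tan}(D^{2})$ to obtain $\hat{\Phi}^{j}_{x}=H(1,\Psi^{j}_{x})$, and finally combine stationarity of the limit with the embedded-geodesic conclusion from the first step together with $(\star)$ to land in $\Lambda_{\theta,w_{k}}$. Your write-up in fact supplies several details the paper leaves implicit (the partition-of-unity construction of $H$, the use of Proposition 2.2 to transfer $F$-closeness between $\partial^{\theta}$ and $\partial^{i}$, and the final contradiction argument upgrading pointwise convergence to the uniform inclusion); the only minor imprecision is that Corollary A.2 gives compactness of $\Lambda_{\theta,2w_{2}}$ rather than of the full stationary set $\Lambda_{\theta,\infty,2w_{2}}$, but the latter is closed by standard varifold compactness and this is exactly the level of detail in the paper's own proof.
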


Combining with the finiteness theorem on $\theta$-bumpy metric, we can apply Deformation Theorem A in \cite{MN1} to rule out capillary embedded geodesics with large Morse index by the applying same arguments in Theorem 2.3 in \cite{K}:
\begin{thm}
Suppose $(D^{2},\partial D^{2},g)$ to be a smooth Riemannian $2$-disk with strictly convex boundary satisfying the hypothesis $(\star)$. For $k=1,2$, for any minimizing sequence $\{ \Phi^{j} \}$ of $k$-sweepouts, there is a deformed minimizing sequence $\{ \hat{\Phi}^{j} \}$ of $\{ \Phi^{j} \}$ satisfying the following property. For any $s>0$, there is some $0<a<w_{k}$ satisfying
    \begin{equation*}
        \{ \partial^{\theta}\hat{\Phi}^{j}_{x} \in V_{1}(D^{2}) : L^{\theta}(\hat{\Phi}^{j}_{x}) \ge w_{k}-a \} \subset \bigcup_{\gamma \in \Lambda_{\theta,w_{k},k}} B^{F}_{s}(\gamma)
        \end{equation*}
for all sufficiently large $j$, where $B^{F}_{s}(\gamma)$ is a $F$-metric ball with the center $\gamma$.
\end{thm}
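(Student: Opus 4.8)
The plan is to run the same index-truncation argument as in \cite[Theorem 2.3]{K}, adapted to the capillary setting, starting from the output of Theorem 3.5. First I would fix $k \in \{1,2\}$ and a minimizing sequence $\{\Phi^j\}$, and apply Theorem 3.5 to obtain a deformed minimizing sequence $\{\Psi^j\}$ whose high-$L^\theta$ part is $F$-close to $\Lambda_{\theta,w_k}$. The key structural input is the finiteness statement from Appendix C: since $g$ is $\theta$-bumpy and we only care about capillary embedded geodesics with $L^\theta \le w_k$ (a bounded set, by the compactness Corollary A.2), the set $\Lambda_{\theta,w_k}$ is finite, say $\Lambda_{\theta,w_k} = \{V_1,\dots,V_N\}$, each $V_i$ nondegenerate with a well-defined Morse index. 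So it suffices to show that any $V_i$ with $\mathrm{index}(V_i) \ge k+1$ can be excluded as a min-max limit by a further length-nonincreasing deformation of the sweepouts supported near $V_i$.

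The main step is the local deformation near a high-index capillary geodesic. For each $V_i$ with index $\ge k+1$, I would use Deformation Theorem A of \cite{MN1}: in an $F$-neighborhood of $V_i$, one constructs a deformation of the ambient family of domains that strictly decreases $L^\theta$ on the part of the family that comes $F$-close to $V_i$, at the cost of only increasing $L^\theta$ on parameters whose domains are already below level $w_k - \epsilon$; the obstruction to carrying this deformation globally is exactly the possibility of creating a family that still sweeps out a nontrivial ($k$-dimensional) class but now with strictly smaller width — which is impossible if $\mathrm{index}(V_i) > k$, because an index-$(k+1)$ critical point cannot be a "mountain pass" for a $k$-parameter min-max. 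Concretely, one covers the relevant sublevel set by the $F$-balls $B^F_s(V_i)$ over $V_i$ with $\mathrm{index} \le k$ together with the complement of a smaller neighborhood of the high-index $V_i$'s; one deforms the minimizing sequence to avoid the high-index balls using the negative cone of the second variation form $Q$ from (3), which in our setting includes the boundary terms $-\tfrac{1}{\sin\theta}\kappa(p_\ell) f^2(p_\ell)$ coming from the strict convexity of $\partial D^2$, and this is where one must be careful: the deformation must be realized by vector fields in $\mathfrak{X}_{tan}(D^2)$ so that it stays inside $\mathcal{D}(D^2)$ and respects the capillary boundary condition, exactly as in the modification of \cite{CD} used in Theorem 3.5. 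The hypothesis $(\star)$ is again invoked to guarantee no critical geodesic lasso contaminates the limit, so the finite set we are deforming away from is genuinely $\Lambda_{\theta,w_k}$ and not a larger set.

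After these local deformations are patched together — using a partition of unity on the parameter space $X_k$ and the continuity of the tightening maps, as in \cite{K} — one obtains the deformed minimizing sequence $\{\hat{\Phi}^j\}$ whose high-$L^\theta$ part lies in $\bigcup_{\gamma \in \Lambda_{\theta,w_k,k}} B^F_s(\gamma)$, which is the assertion. I expect the main obstacle to be the interpolation/patching: in the curve setting, unlike the hypersurface setting of \cite{MN1,MN2}, the deformation near a critical point cannot be read off directly from an ambient isotopy but must be glued to the curve-shortening-flow-based tightening of Theorem 3.4 and Theorem 3.5 while preserving embeddedness and the degree-one condition (4) on endpoints; the length control of this interpolation — ensuring the deformed family is still a genuine $k$-sweepout with width arbitrarily close to $w_k$ — is the delicate point, and it is precisely the curve-setting refinement of the argument in \cite{K} that the author refers to in the introduction.
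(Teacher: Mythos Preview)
Your proposal is correct and follows essentially the same approach as the paper: start from the output of Theorem 3.5, use the finiteness of $\Lambda_{\theta,w_k}$ coming from the $\theta$-bumpy assumption (Corollary A.3), and then apply Deformation Theorem A of \cite{MN1} exactly as in Theorem 2.3 of \cite{K} to exclude the capillary geodesics of index $\ge k+1$. The paper's proof is in fact a single sentence invoking precisely these three ingredients, so your elaboration of the index-truncation mechanism and the interpolation concerns is more detailed than what the paper supplies but faithful to its intended argument.
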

\section{Existence of multiple capillary geodesics}
In this section, we prove the existence of multiple capillary embedded geodesics on $(D^{2},\partial D^{2},g)$ satisfying the hypothesis $(\star)$ based on our min-max constructions in Section 3. For $k=1,2$, we proved that each $k$-sweepout produces at least one capillary embedded geodesic in Theorem 3.5 as a limit of min-max sequence (cf. Section 6 of \cite{MN} and Section 3 of \cite{Ke}). We now show that either $k$-widths $w_{k}(D^{2})$ are different each other or there is an $S^{1}$-cycle of capillary geodesics by Lusternik-Schnirelmann type arguments. Hence there are at least two capillary embedded geodesics on $(D^{2}, \partial D^{2},g)$.

First of all, we prove that if a capillary varifold of $\Omega' \in \mathcal{D}(D^{2})$ is sufficiently close to a capillary varifold of a given domain $\Omega \in \mathcal{D}(D^{2})$ in an $F$-metric topology then the endpoint function $e(\Omega')$ should be contained in the small neighborhood $N_{h,\partial D^{2}}(supp(\partial^{b} \Omega))$ which is a proper open subset of $\partial D^{2}$. We apply the arguments in the proof of Lemma 3.1 in \cite{K} with the modification of constants on the relation between $F$-distance and Hausdorff distance.
\begin{lem}
Let $\Omega \in \mathcal{D}(D^{2})$ be a given domain. For every $h>0$, there exists $\delta = \delta(h, \Omega)>0$ if
\begin{equation}
    F( \partial^{\theta} \Omega, \partial^{\theta} \Omega') < \delta
\end{equation}
then
\begin{equation}
   e(\Omega')  \in N_{h,\partial D^{2}}(supp(\partial^{b}\Omega)) .
\end{equation}
\begin{proof}
For a given $h>0$, we can find $0<h'<h$ depending on $\Omega$ such that
\begin{equation}
     \partial D^{2} \cap N_{h'}(\partial \Omega) \subset N_{h,\partial D^{2}}(supp(\partial^{b}\Omega)).
\end{equation}

For $p \in \partial \Omega$, note that the density $\theta(p) \ge \cos \theta>0$. We can apply the arguments of the proof to bound Hausdorff distance of Lemma 3.1 in \cite{K} after changing constants depending on the density lower bound. Let us take $\delta = h'^{2}\cos \theta/10$.  Then by (8) and the application of the arguments in the proof of Lemma 3.1 in \cite{K} directly, we have
\begin{equation}
    d_{\mathcal{H}}(\partial \Omega,\partial \Omega')< h'. 
\end{equation}
(11) implies that $e(\Omega')$ is contained in $\partial D^{2} \cap N_{h'}(\partial \Omega)$, and we obtain (9) from (10).
\end{proof}
\end{lem}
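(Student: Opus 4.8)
The plan is to prove Lemma 4.1 by first reducing the desired conclusion about the endpoint function to a Hausdorff-distance estimate between the supports $\partial\Omega$ and $\partial\Omega'$, and then importing that estimate from Lemma 3.1 of \cite{K}. The key structural observation is that the capillary varifold $\partial^\theta\Omega = \partial^i\Omega + \cos\theta\,\partial^b\Omega$ has density bounded below by $\cos\theta > 0$ at every point of its support (density exactly $1$ on the interior part, exactly $\cos\theta$ on the genuinely boundary part, and no worse at the two endpoints where the two pieces meet). This uniform density lower bound is what lets the $F$-metric control the Hausdorff distance of supports: a small $F$-ball around $\partial^\theta\Omega$ cannot contain a varifold whose support strays far from $\mathrm{supp}(\partial^\theta\Omega) = \partial\Omega$, because placing mass far away costs a definite amount of $F$-distance, and conversely $\partial\Omega'$ cannot have a "hole" relative to $\partial\Omega$ since mass of density $\ge \cos\theta$ must be accounted for.

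First I would fix $h > 0$ and choose $0 < h' < h$ depending only on $\Omega$ so that $\partial D^2 \cap N_{h'}(\partial\Omega) \subset N_{h,\partial D^2}(\mathrm{supp}(\partial^b\Omega))$; this is possible because $\mathrm{supp}(\partial^b\Omega)$ is exactly the trace of $\partial\Omega$ on $\partial D^2$, so points of $\partial D^2$ within intrinsic-in-$D^2$ distance $h'$ of $\partial\Omega$ are, for $h'$ small, within intrinsic-in-$\partial D^2$ distance $h$ of that trace — one uses here that $\partial D^2$ is a smooth embedded curve so the two distance functions are comparable near it. Next, set $\delta = h'^2 \cos\theta / 10$ and invoke the Hausdorff-bound argument from the proof of Lemma 3.1 in \cite{K}: that argument produces, from $F(\partial^\theta\Omega, \partial^\theta\Omega') < \delta$, the bound $d_{\mathcal H}(\partial\Omega, \partial\Omega') < h'$, with the only modification being that the constant $1$ used there as a density lower bound is replaced by $\cos\theta$ (hence the factor $\cos\theta$ in $\delta$). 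Finally, since $e(\Omega') = \pi_1\partial(\Omega')$ is one of the two endpoints of $\partial_{rel}(\Omega')$, it lies on $\partial D^2$ and also on $\partial\Omega'$, hence within $h'$ of $\partial\Omega$ by (11); so $e(\Omega') \in \partial D^2 \cap N_{h'}(\partial\Omega) \subset N_{h,\partial D^2}(\mathrm{supp}(\partial^b\Omega))$, which is (9).

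The main obstacle I anticipate is purely bookkeeping rather than conceptual: one must verify carefully that the density lower bound $\cos\theta$ is genuinely uniform over $\mathrm{supp}(\partial^\theta\Omega)$ — in particular at the two endpoints $p_1, p_2$ of $\gamma = \partial_{rel}(\Omega)$, where $\partial^i\Omega$ has a free end (density $1/2$ contribution from the interior arc seen as a varifold at an endpoint, but the support is still there) and $\partial^b\Omega$ may also be present. Since we only need a \emph{lower} bound on density over the support (not on every tangent measure), the value $\cos\theta$ from the boundary piece, or $1/2 \ge \cos\theta$-type contributions from the interior piece, suffice; but one should state this cleanly so that the cited estimate from \cite{K} applies verbatim. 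A secondary point worth a sentence is that (10) genuinely requires $h'$ small depending on $\Omega$ (not merely on $h$), because the comparability constant between $d$ and $d_{\partial D^2}$ near $\partial\Omega$, and the geometry of how $\partial\Omega$ meets $\partial D^2$, enter here; this is why $\delta$ is allowed to depend on $\Omega$ as well as $h$, consistent with the statement $\delta = \delta(h,\Omega)$.
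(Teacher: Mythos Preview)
Your proposal is correct and follows essentially the same approach as the paper: choose $h'$ so that \eqref{eq:10}-type containment holds, use the uniform density lower bound $\cos\theta$ on $\partial^\theta\Omega$ to import the Hausdorff-distance estimate from Lemma~3.1 of \cite{K} with $\delta = h'^2\cos\theta/10$, and then conclude via $e(\Omega') \in \partial D^2 \cap N_{h'}(\partial\Omega)$. Your additional remarks on the endpoint densities and on why $h'$ must depend on $\Omega$ are accurate elaborations of points the paper leaves implicit.
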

We now prove the multiple existence of capillary embedded geodesics.
\begin{thm}
Suppose $(D^{2},\partial D^{2},g)$ satisfies the hypothesis $(\star)$. If $w_{1}(D^{2}) = w_{2}(D^{2})$, then there exist infinitely many distinct smooth embedded capillary geodesics in $(D^{2},\partial D^{2}, g)$.
\begin{proof}
Let us argue by contradiction. Suppose that there are only finitely many capillary embedded geodesics in $(D^{2},\partial D^{2},g)$. Denote the set of domains $\Omega_{i} \in \mathcal{D}(D^{2})$ such that $L^{\theta}(\Omega_{i}) = w_{2}(D^{2})$ and $\partial^{i} \Omega_{i}$ is a capillary embedded geodesic for each $1 \le i \le k$ by $\mathcal{S} = \{ \Omega_{1}, ..., \Omega_{k}\}$, and the corresponding set of capillary varifolds by $\mathcal{V}_{\mathcal{S}}$, namely $\mathcal{V}_{\mathcal{S}} = \{ \partial^{\theta}\Omega_{1}, ..., \partial^{\theta}\Omega_{k}\}$. There exists $s_{0}>0$ such that $B^{F}_{s_{0}}(\partial^{\theta}\Omega_{i})$ are disjoint each other for $1 \le i \le k$. We denote that $\gamma_{i} = \partial^{\theta} \Omega_{i}$.

Also there exists $h_{0}>0$ such that $N_{h_{0},\partial D^{2}}(supp(\partial^{b}\Omega_{i}))$ are proper open subsets of $\partial D^{2}$ for all $1 \le i \le k$. Then we take $\delta_{i} : = \delta (h_{0}, \Omega_{i})$ in Lemma 4.1 for $1 \le i \le k$.

We consider a minimizing sequence of $2$-sweepouts $\{ \Phi^{j} \}_{j \in \mathbb{N}}$ and its tightened minimizing sequence $\{ \hat{\Phi}^{j} \}_{j \in \mathbb{N}}$ by applying Theorem 3.5 so that it satisfies the following property: For any $s>0$, there exists $a>0$ such that
\begin{equation}
            \{ \partial^{\theta}\hat{\Phi}^{j}_{x} \in V_{1}(D^{2}) : L^{\theta}(\hat{\Phi}^{j}_{x}) \ge w_{2}-a \} \subset \bigcup_{\gamma \in \mathcal{V}_{\mathcal{S}}} B^{F}_{s}(\gamma)
\end{equation}
for sufficiently large $j$. We take $0<s<\min(s_{0}/2,\min_{1 \le i \le k} \delta_{i}/2)$. For each $j$, we define open sets $\mathcal{Y}_{j}$ and $\mathcal{Z}_{j}$ to be
\begin{equation*}
    \mathcal{Y}_{j}:= \{ x \in S^{1} \times I \, |\, F(\partial^{\theta}\hat{\Phi}^{j}_{x},\mathcal{V}_{\mathcal{S}})<2s \}
\end{equation*}
and
\begin{equation}
    \mathcal{Z}_{j}:= \{ x \in S^{1} \times I \,|\, F(\partial^{\theta}\hat{\Phi}^{j}_{x},\mathcal{V}_{\mathcal{S}})>s\}.
\end{equation}
Let us divide $\mathcal{Y}_{j}$ into
\begin{equation*}
    \mathcal{Y}_{j,i}= \{ x \in S^{1} \times I \, |\, F(\partial^{\theta}\hat{\Phi}^{j}_{x},\partial^{\theta}\Omega_{i})<2s \}.
\end{equation*}
By the choice of $s$, $\mathcal{Y}_{j} = \cup_{1 \le i \le k} \mathcal{Y}_{j,i}$ and open sets $\mathcal{Y}_{j,i}$ are disjoint each other.

For a fixed $j$, we first claim that there exists a closed curve $\alpha_{j}:S^{1} \rightarrow \mathcal{Y}_{j}$ which is not nullhomotopic in $S^{1} \times I$. Suppose not. Then since all the curves in $\mathcal{Y}_{j}$ are contractible, we can find a continuous path $\beta_{j}:[0,1] \rightarrow (S^{1} \times I) \setminus \mathcal{Y}_{j} \subset \mathcal{Z}_{j}$ such that $\beta_{j}(0) \in S^{1} \times \{ 0 \}$ and $\beta_{j}(1) \in S^{1} \times \{ 1 \}$ (See the proof of Theorem 1.6 in \cite{Ke} for a similar Lusternik-Schnirelmann type argument).

We have $\hat{\Phi}^{j}_{\beta_{j}(0)} = \emptyset$ and $\hat{\Phi}^{j}_{\beta_{j}(1)} = D^{2}$ and $\Psi(t): = \hat{\Phi}^{j}_{\beta_{j}(t)}$ is a $1$-sweepout. On the other hand, (12) and (13) gives \begin{equation*}
    L^{\theta}(\hat{\Phi}^{j}_{\beta_{j}(t)})<w_{2}(D^{2})-a = w_{1}(D^{2})-a
\end{equation*}
for $t \in I$. This contradicts to the definition of $1$-sweepout and we have an existence of a closed curve $\alpha_{j}$ which is not nullhomotopic in $S^{1} \times I$. 

Then by the fact of $\alpha_{j}$ being not nullhomotopic in $S^{1} \times I$ and the definition (4) of $2$-sweepout, $f_{j} : = e \circ \alpha_{j}: S^{1} \rightarrow \partial D^{2}$, the endpoint function of $\alpha_{j}$, is a continuous map and has a nonzero mapping degree. By the degree theory argument, $f_{j}$ is a surjective map. Since $\alpha_{j}$ is a continuous loop and $\mathcal{Y}_{j,i}$ are disjoint open sets, $\alpha_{j}(S^{1})$ is contained in a single $\mathcal{Y}_{j,i}$ for some $1 \le i \le k$ and $F(\partial^{\theta}\hat{\Phi}^{j}_{\alpha_{j}(t)},\gamma_{i} )<2s$ for $t \in S^{1}$.

By our choice of $s$, we have
\begin{equation*}
    F(\partial^{\theta}\hat{\Phi}^{j}_{\alpha_{j}(t)},\gamma_{i} )<2s< \delta_{i}
\end{equation*}
and we apply Lemma 4.1 and obtain the following:
\begin{equation*}
    f_{j}(t) \in N_{h_{0},\partial D^{2}}(supp(\partial^{b}\Omega_{i}))
\end{equation*}
for $t \in  S^{1}$. Since $N_{h_{0},\partial D^{2}}(supp(\partial^{b}\Omega_{i}))$ is a proper open subset of $\partial M$, we have
\begin{equation*}
    f_{j}(S^{1}) \subseteq N_{h_{0},\partial D^{2}}(supp(\partial^{b}\Omega_{i})) \subsetneq \partial D^{2}.
\end{equation*}
This contradicts to the surjectivity of $f_{j}$. Thus if $w_{1}(D^{2}) = w_{2}(D^{2})$, then there are infinitely many distinct smooth embedded capillary geodesics in $(D^{2}, \partial D^{2}, g)$.
\end{proof}
\end{thm}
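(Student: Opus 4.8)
The plan is to argue by contradiction through a Lusternik--Schnirelmann type argument, exploiting the equality $w_1(D^2) = w_2(D^2)$ to force the one- and two-parameter min-max constructions to collide. Suppose that there are only finitely many capillary embedded geodesics, and let $\Omega_1,\dots,\Omega_k \in \mathcal{D}(D^2)$ enumerate the domains with $L^\theta(\Omega_i) = w_2(D^2)$ whose relative boundary is a capillary embedded geodesic, writing $\gamma_i := \partial^\theta \Omega_i$ for the associated distinct multiplicity-one capillary varifolds (by Theorem 3.5 there is at least one). First I would fix $s_0 > 0$ so that the $F$-balls $B^F_{s_0}(\gamma_i)$ are pairwise disjoint, and $h_0 > 0$ so that each $N_{h_0,\partial D^2}(supp(\partial^b \Omega_i))$ is a proper open subset of $\partial D^2$; Lemma 4.1 then supplies thresholds $\delta_i := \delta(h_0,\Omega_i)$ such that $F$-closeness of a capillary varifold to $\gamma_i$ forces its endpoint into $N_{h_0,\partial D^2}(supp(\partial^b \Omega_i))$. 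Next I would take a minimizing sequence of $2$-sweepouts $\{\Phi^j\}$ and replace it by the pulled-tight sequence $\{\hat\Phi^j\}$ of Theorem 3.5, so that for each small $s$ there is $a>0$ with every slice of $L^\theta$-value at least $w_2 - a$ lying in $\bigcup_i B^F_s(\gamma_i)$; here the hypothesis $(\star)$ is precisely what prevents a critical geodesic lasso from entering this concentration list.

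With $s$ chosen below $s_0/2$ and below $\min_i \delta_i/2$, I would split $S^1 \times I$ into the open set $\mathcal{Y}_j = \{x : F(\partial^\theta \hat\Phi^j_x, \{\gamma_i\}) < 2s\}$ and the open set $\mathcal{Z}_j = \{x : F(\partial^\theta \hat\Phi^j_x, \{\gamma_i\}) > s\}$, and further decompose $\mathcal{Y}_j = \bigsqcup_i \mathcal{Y}_{j,i}$ into its (disjoint, by the choice of $s$) pieces near the individual $\gamma_i$. The crux is a dichotomy for each large $j$: if $\mathcal{Y}_j$ contains no loop $S^1 \to \mathcal{Y}_j$ that is non-nullhomotopic in $S^1 \times I$, then since every loop in $\mathcal{Y}_j$ bounds, a standard argument (cf. the proof of Theorem 1.6 in \cite{Ke}) produces a path $\beta_j : [0,1] \to \mathcal{Z}_j$ with $\beta_j(0) \in S^1 \times \{0\}$ and $\beta_j(1) \in S^1 \times \{1\}$; then $t \mapsto \hat\Phi^j_{\beta_j(t)}$ is a $1$-sweepout all of whose slices, lying in $\mathcal{Z}_j$, satisfy $L^\theta < w_2 - a = w_1 - a$, contradicting the definition of $w_1$. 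Hence a non-nullhomotopic loop $\alpha_j$ in $\mathcal{Y}_j$ must exist; property (4) of a $2$-sweepout then forces the endpoint map $f_j := e \circ \alpha_j : S^1 \to \partial D^2$ to have nonzero mapping degree, hence to be surjective. But $\alpha_j(S^1)$, being connected, lies in a single component $\mathcal{Y}_{j,i}$, so $F(\partial^\theta \hat\Phi^j_{\alpha_j(t)}, \gamma_i) < 2s < \delta_i$ for all $t$, and Lemma 4.1 confines $f_j(S^1)$ to the proper open subset $N_{h_0,\partial D^2}(supp(\partial^b \Omega_i))$ of $\partial D^2$ --- contradicting surjectivity. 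This contradiction rules out the finiteness assumption, so there are infinitely many distinct capillary embedded geodesics.

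I expect the main obstacle to be the topological dichotomy step: verifying that the absence of a non-nullhomotopic loop in $\mathcal{Y}_j$ really yields a path through $\mathcal{Z}_j$ joining the two boundary circles of the annulus, and that the restricted family along that path is a legitimate $1$-sweepout (one must check Hausdorff-continuity of the restricted family and that its endpoint slices are $\emptyset$ and $D^2$). A secondary technical point is reconciling the $F$-metric concentration estimate of Theorem 3.5 with the Hausdorff-to-endpoint estimate of Lemma 4.1, i.e. choosing $s$, $a$, $\delta_i$, $h_0$ in a mutually compatible order so that the near-maximal slices of $\hat\Phi^j$ are simultaneously $F$-close to some $\gamma_i$ and have their endpoints pinned by Lemma 4.1. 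Once these are arranged, the degree-theoretic contradiction goes through immediately.
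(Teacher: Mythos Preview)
Your proposal is correct and follows essentially the same approach as the paper's proof: the same contradiction setup via a finite critical set, the same use of Theorem~3.5 to concentrate near-maximal slices into $F$-balls around the $\gamma_i$, the same $\mathcal{Y}_j/\mathcal{Z}_j$ dichotomy leading either to a $1$-sweepout with $L^\theta < w_1 - a$ or to a non-nullhomotopic loop $\alpha_j$ in a single $\mathcal{Y}_{j,i}$, and the same final contradiction between the degree-one surjectivity of $f_j = e\circ\alpha_j$ and Lemma~4.1's confinement of the endpoints to a proper arc of $\partial D^2$. The technical concerns you flag (the annulus-topology step producing $\beta_j$, and the compatible choice of $s$, $a$, $\delta_i$, $h_0$) are exactly the points the paper addresses in the same order and manner.
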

For $(D^{2}, \partial D^{2}, g)$ endowed with a $\theta$-bumpy metric, there is no capillary embedded geodesic with a nontrivial Jacobi field. This implies a finiteness of capillary embedded geodesics with an $L^{\theta}$-functional bound $w_{2}(D^{2})$ and we obtain two distinct capillary geodesics with different $L^{\theta}$-functional values. We discuss the genericness of $\theta$-bumpy metrics of capillary embedded geodesics in Appendix C. 

\begin{cor}
There are at least distinct two capillary embedded geodesics on Riemannian disks $(D^{2},\partial D^{2},g)$ with a $\theta$-bumpy metric satisfying the hypothesis $(\star)$. In particular, if $(D^{2},\partial D^{2},g)$ is endowed with a $\theta$-bumpy metric, then for $k=1,2$, there exists a capillary embedded geodesic $\gamma_{k}$ and its corresponding domains $\Omega_{k}$ such that $\gamma_{k} = \partial_{rel} (\Omega_{k})$ and
\begin{equation*}
    w_{k}(D^{2}) = L^{\theta} (\Omega_{k}),
\end{equation*}
their widths satisfy $w_{1}(D^{2})<w_{2}(D^{2})$.
\begin{proof}
By Theorem 3.5, for $k=1,2$, there exists a capillary embedded geodesic $\gamma_{k}$ which is obtained as a min-max limit of a tightened minimizing sequence $\{ \hat{\Phi}^{j} \}$. If $w_{1}(D^{2})<w_{2}(D^{2})$, then $\gamma_{1}$ and $\gamma_{2}$ are distinct. Suppose $w_{1}(D^{2})=w_{2}(D^{2})$. Then by Theorem 4.2, there are infinitely many capillary embedded geodesics whose $L^{\theta}$-functional value is $w_{1}(D^{2})=w_{2}(D^{2})$.

Now we consider the bumpy metric case. Suppose $w_{1}(D^{2})=w_{2}(D^{2})$. Then there is an infinite sequence of capillary embedded geodesics $\{ \gamma_{k} \}$ where their corresponding domains are $\{ \Omega_{k} \}$. Denote the limit capillary geodesic and limit domain by $\gamma$ and $\Omega$, respectively. By compactness theorem of capillary embedded geodesics (Theorem A.1), this induces a nontrivial Jacobi field of the limit capillary varifold $\partial^{\theta}\Omega$ and this contradicts to the $\theta$-bumpy assumption. Hence $w_{1}(D^{2})<w_{2}(D^{2})$ and two capillary geodesics are distinct each other.
\end{proof}
\end{cor}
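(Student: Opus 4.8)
The plan is to extract one min-max capillary geodesic from each width, then use the Lusternik--Schnirelmann dichotomy of Theorem 4.2 together with a compactness argument to force the two geodesics to be genuinely different. To begin, I would apply Theorem 3.5 with $k=1$ and $k=2$: along a tightened minimizing sequence of $k$-sweepouts one obtains a min-max limit which is a multiplicity-one capillary embedded geodesic $\gamma_{k} = \partial_{rel}(\Omega_{k})$ with contact angle $\theta$ (and no critical geodesic lasso in the limit, by $(\star)$) satisfying $L^{\theta}(\Omega_{k}) = w_{k}(D^{2})$. The monotonicity $w_{1}(D^{2}) \le w_{2}(D^{2})$ is immediate from the definition of the widths, since every $1$-parameter slice $\{\Phi_{s,t}\}_{t\in I}$ of a $2$-sweepout is a $1$-sweepout.

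Next comes the dichotomy, valid for any metric satisfying $(\star)$. If $w_{1}(D^{2}) < w_{2}(D^{2})$, then $L^{\theta}(\Omega_{1}) \ne L^{\theta}(\Omega_{2})$, so $\gamma_{1} \ne \gamma_{2}$ and we already have two distinct capillary embedded geodesics together with the asserted strict inequality. If $w_{1}(D^{2}) = w_{2}(D^{2})$, then Theorem 4.2 produces infinitely many pairwise distinct capillary embedded geodesics, all realizing the common value $w_{1} = w_{2}$; in particular at least two exist. This establishes the first sentence of the corollary.

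To obtain the strict ordering $w_{1}(D^{2}) < w_{2}(D^{2})$ under the $\theta$-bumpy hypothesis, I would argue by contradiction. Suppose $w_{1}(D^{2}) = w_{2}(D^{2}) =: w$. By the previous step there is a sequence $\{\gamma_{k}\}$ of pairwise distinct capillary embedded geodesics with domains $\Omega_{k}$ and $L^{\theta}(\Omega_{k}) = w$ for all $k$; the interior lengths are then bounded by $w$ and the boundary lengths by $|\partial D^{2}|$, so the total lengths are uniformly bounded. Since $(\star)$ forbids a critical geodesic lasso of length at most $2w_{2}$, the compactness theorem for capillary embedded geodesics (Theorem A.1 and Corollary A.2) yields a subsequence converging smoothly, with multiplicity one, to a capillary embedded geodesic $\gamma = \partial_{rel}(\Omega)$ with $L^{\theta}(\Omega) = w$. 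Writing the nearby $\gamma_{k}$ as normal graphs of functions $u_{k} \to 0$ over $\gamma$, subtracting the geodesic equation with the $\theta$-contact-angle boundary condition and normalizing by $\|u_{k}\|$, the limit of $u_{k}/\|u_{k}\|$ is a nonzero solution of the Jacobi equation associated with the bilinear form $Q$ of $(3)$; that is, $\gamma$ carries a nontrivial Jacobi field, contradicting $\theta$-bumpiness. Hence $w_{1}(D^{2}) < w_{2}(D^{2})$, and $\gamma_{1}$, $\gamma_{2}$ are distinct.

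The main obstacle is the compactness-plus-Jacobi-field step: one must rule out that an accumulating family of capillary embedded geodesics with fixed contact angle and bounded $L^{\theta}$ degenerates --- into a geodesic lasso, into a configuration with a junction, or into a curve whose boundary contact angle has drifted away from $\theta$ --- which is exactly what Theorem A.1 combined with $(\star)$ is designed to prevent, and then one must ensure the limiting Jacobi field is genuinely nonzero, i.e.\ that the $\gamma_{k}$ are distinct as curves rather than merely as reparametrized domains. The remaining parts are routine bookkeeping with the widths and a direct appeal to Theorem 4.2.
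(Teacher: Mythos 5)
Your proposal is correct and follows essentially the same route as the paper: Theorem 3.5 produces the two min-max capillary geodesics realizing $w_{1}$ and $w_{2}$, Theorem 4.2 handles the case $w_{1}=w_{2}$ by giving infinitely many geodesics, and the compactness theorem (Theorem A.1/Corollary A.2) then yields a limit capillary geodesic with a nontrivial Jacobi field, contradicting $\theta$-bumpiness and forcing $w_{1}<w_{2}$. Your added detail on the uniform length bound and the graphical normalization producing the Jacobi field is a correct elaboration of the same argument, not a different approach.
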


We now prove Theorem 1.1 which shows a possible curvature condition of Riemannian disks containing two min-max capillary embedded geodesics.
\begin{thm}
There are at least 2 capillary embedded geodesics on Riemannian disks $(D^{2},\partial D^{2},g)$ with nonnegative Gaussian curvature in $D^{2}$ and total signed geodesic curvature on boundary satisfies $\int_{\partial D^{2}} \kappa \ge \pi$.
    \begin{proof}
    It suffices to show that if a Riemmanian disk $(D^{2},\partial D^{2},g)$ satisfies the conditions on Gaussian curvature and total signed geodesic curvature then it satisfies the hypothesis $(\star)$. We prove that there is no simple geodesic lasso on $(D^{2},\partial D^{2},g)$.

    Suppose there exists a simple geodesic lasso $\gamma:[0,L] \rightarrow D^{2}$ with a vertex $\gamma(0) = \gamma(L) = p \in \partial D^{2}$. We define two angles $\alpha_{0}, \alpha_{L} \in [0,\pi]$ by angles between tangent vectors at $p$ satisfying
    \begin{align}
         \langle \gamma'(0), v_{p} \rangle &= \cos \alpha_{0}\\
         \langle \gamma'(L), v_{p} \rangle &= \cos \alpha_{L},
    \end{align}
    where $v_{p}$ is a unit tangent vector at $p$ of $\partial D^{2}$. Here we choose the direction of $v_{p}$ to satisfy $\alpha_{0} + \alpha_{L} < \pi$.  Also let us denote the region bounded by $\gamma$ by $\Omega$. Note that $\alpha_{0}$ and $\alpha_{L}$ are exterior angles of $\Omega$.
    
    By Gauss-Bonnet theorem, we know that $\int_{D^{2}} K \le \pi$ from $\int_{\partial D^{2}} \kappa \ge \pi$. Then we have
    \begin{align}
    2 \pi &= \int_{\Omega} K + \int_{\gamma} \kappa + \alpha_{0}+\alpha_{L}\\
    &\le \int_{D^{2}} K + \int_{\gamma} \kappa + \alpha_{0}+\alpha_{L} \\
    &\le \pi + \alpha_{0} + \alpha_{L} < 2 \pi \nonumber
    \end{align}
    and this shows the contradiction. (16) holds since $\Omega$ is topological disk and the angle condition (14) and (15) hold and (17) comes from the nonnegativity of Gaussian curvature on $D^{2}$. Hence there is no simple geodesic lasso on $(D^{2},\partial D^{2},g)$ and this nonexistence implies the hypothesis $(\star)$.
    \end{proof}
\end{thm}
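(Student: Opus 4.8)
The plan is to reduce Theorem~4.4 to the general existence statement (Theorem~1.5, which is established through Theorems~3.5 and 4.2): once strict convexity of $\partial D^{2}$ is in force, the only additional input needed for the existence of at least two capillary embedded geodesics is the hypothesis $(\star)$. So it suffices to show that $K \ge 0$ in $D^{2}$ together with $\int_{\partial D^{2}}\kappa \ge \pi$ forces $\Lambda_{las,2w_{2}} = \emptyset$. I would in fact prove the stronger and cleaner fact that $(D^{2},\partial D^{2},g)$ carries \emph{no} simple geodesic lasso at all: since the relative boundary of any critical lasso domain is in particular a simple geodesic lasso, this gives $(\star)$ at once (indeed $\Lambda_{las,a} = \emptyset$ for every $a$).

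To rule out simple geodesic lassos I would argue by contradiction, applying the Gauss--Bonnet theorem twice. Suppose $\gamma\colon[0,L]\to D^{2}$ is a simple geodesic lasso with vertex $p = \gamma(0) = \gamma(L) \in \partial D^{2}$ and $\gamma((0,L))$ an embedded geodesic in the open disk. Then $\gamma$ is a Jordan curve in $D^{2}$ meeting $\partial D^{2}$ only at $p$, so it bounds a topological disk $\Omega$ with $\partial_{rel}\Omega = \gamma$ and a single corner at $p$. Fix a unit tangent $v_{p}$ of $\partial D^{2}$ at $p$ and set $\alpha_{0},\alpha_{L}\in[0,\pi]$ via $\langle\gamma'(0),v_{p}\rangle = \cos\alpha_{0}$ and $\langle\gamma'(L),v_{p}\rangle = \cos\alpha_{L}$. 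Using strict convexity of $\partial D^{2}$ — so that, near $p$, the disk lies locally on one side of the tangent line and the two rays of $\gamma$ issuing from $p$ both point into the interior — one checks, after orienting $v_{p}$ suitably, that the interior angle of $\Omega$ at $p$ equals $\pi - \alpha_{0} - \alpha_{L}$, hence its exterior angle equals $\alpha_{0}+\alpha_{L}$, and moreover $\alpha_{0}+\alpha_{L} < \pi$.

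Now Gauss--Bonnet on $\Omega$, whose only corner is $p$ and along whose boundary $\gamma$ the geodesic-curvature term vanishes, gives
\begin{equation*}
\int_{\Omega} K\, dA + \alpha_{0} + \alpha_{L} = 2\pi\,\chi(\Omega) = 2\pi .
\end{equation*}
On the other hand, Gauss--Bonnet on the smooth-boundary disk $D^{2}$ gives $\int_{D^{2}} K\, dA = 2\pi - \int_{\partial D^{2}}\kappa \le \pi$ by hypothesis, so since $K \ge 0$ we get $\int_{\Omega} K\, dA \le \int_{D^{2}} K\, dA \le \pi$. Combining,
\begin{equation*}
2\pi = \int_{\Omega} K\, dA + \alpha_{0} + \alpha_{L} \le \pi + \alpha_{0} + \alpha_{L} < 2\pi ,
\end{equation*}
which is absurd. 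Hence no simple geodesic lasso exists, $(\star)$ holds, and Theorem~1.5 delivers at least two capillary embedded geodesics.

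I expect the one genuinely delicate point to be the angle bookkeeping at $p$: identifying the exterior angle of $\Omega$ with $\alpha_{0}+\alpha_{L}$ and arranging $\alpha_{0}+\alpha_{L}<\pi$ through the choice of $v_{p}$, which is precisely where strict convexity enters (the tangent cone of $\Omega$ at $p$ must sit inside the half-plane tangent cone of $D^{2}$, so the $\Omega$-sector is the one of angle $\pi-\alpha_{0}-\alpha_{L}$). Two borderline configurations should also be addressed, though neither affects the argument: $\gamma$ could a priori touch $\partial D^{2}$ at points other than $p$, but by convexity such contact is tangential and one may pass to a sub-lasso; and the cusp case $\alpha_{0}+\alpha_{L}=\pi$ (interior angle $0$) is excluded because, by uniqueness of geodesics with prescribed initial data, $\gamma'(0) = -\gamma'(L)$ would force $\gamma$ to retrace itself, contradicting embeddedness.
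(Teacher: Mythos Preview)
Your proposal is correct and follows essentially the same route as the paper: reduce to verifying $(\star)$ by ruling out all simple geodesic lassos via a double application of Gauss--Bonnet (once on $D^{2}$ to get $\int_{D^{2}}K \le \pi$, once on the lasso-bounded disk $\Omega$), yielding the contradiction $2\pi \le \pi + \alpha_{0}+\alpha_{L} < 2\pi$. Your additional remarks on the angle bookkeeping at $p$ and the borderline configurations are more explicit than the paper's treatment but do not alter the argument.
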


\section{Morse index bound of min-max capillary and free boundary embedded geodesics}
In this section, we prove the generic Morse index bound of min-max capillary embedded geodesics which we constructed in previous sections. Throughout this section, we assume the hypothesis $(\star)$ and the $\theta$-bumpiness of the metric.

By the compactness theorem of capillary varifolds (Corollary A.2) and the bumpy metric theorem of capillary geodesics (Theorem C.1), we have the generic finiteness of capillary embedded geodesics with bounded length. 

We mainly follow the ideas to prove Morse index bound in \cite{K} and explain the necessary modifications in capillary case. We apply curve shortening flow with fixed boundary instead of curve shortening flow in simple closed geodesic cases.
\subsection{Local min-max theorem}
We state the boundary version of local min-max theorem in \cite{K} (See also \cite{MN2} and \cite{W2}). For a domain $\Omega$ and curve $\gamma$ with $\partial_{rel} \Omega = \gamma$, denote $\Omega_{v}$ as a domain induced by $(F_{v})_{\sharp}(\gamma)$ where $\{ F_{v}\}_{v \in \bar{B}^{k}}$ is a local min-max diffeomorphism. By taking the vector fields with normal negative eigensections, the following directly holds by the proof of Theorem 4.1 in \cite{K}:
\begin{thm} [\cite{K}, Theorem 4.1]
Let $(D^{2},\partial D^{2},g)$ be a Riemannian disk with $\theta$-bumpy metric and strictly convex boundary. Let $\gamma$ be an embedded capillary geodesic with contact angle $\theta$ with corresponding domain $\Omega$, Morse index $k$ and multiplicity one. For every $\beta>0$, there is $\epsilon>0$ and a smooth family $\{ F_{v}\}_{v \in \bar{B}^{k}} \subset Diff(D^{2})$ such that
\begin{enumerate}[label=(\roman*)]
\item $F_{0} = Id, \, F_{-v} = F_{v}^{-1}$ for all $v \in \bar{B}^{k}$; 
\item the function
\begin{equation*}
    L^{\theta}_{\gamma}: \bar{B}^{k} \rightarrow [0, \infty), \,\,\,\, L^{\theta}_{\gamma}(v) = L^{\theta}(\Omega_{v}),
\end{equation*}
is strictly concave;
\item $||F_{v}-Id||_{C^{1}}< \beta$ for all $v \in \bar{B}^{k}$;
\end{enumerate}
and such that for every $V \in IV_{1}(D^{2})$ induced by a related boundary of a domain $\Omega'$ whose endpoints are on $\partial D^{2}$ with $F(V, \gamma) < \epsilon$, we have
\begin{equation*}
\max_{v \in \bar{B}^{k}} L^{\theta} (\Omega'_{v})\ge L^{\theta}(\Omega)
\end{equation*}
with equality only if $\gamma = (F_{v})_{\sharp}V$ for some $v \in \bar{B}^{k}$.
\end{thm}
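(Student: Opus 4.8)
The plan is to reduce the statement to the second variation formula (3) and the standard finite-dimensional local min-max machinery, exactly as in Theorem 4.1 of \cite{K}, modifying only the places where the boundary contact angle enters. First I would fix the capillary embedded geodesic $\gamma = \partial_{rel}\Omega$ of Morse index $k$ and, using the $\theta$-bumpy hypothesis, note that the stability form $Q$ in (3) has exactly $k$ negative eigenvalues and no zero eigenvalue, so there is a $k$-dimensional space $W \subset C^{\infty}(\gamma)$ spanned by $L^{2}$-orthonormal eigensections $\phi_{1},\dots,\phi_{k}$ on which $Q$ is negative definite, with a uniform gap $Q(\phi,\phi) \le -\lambda\|\phi\|_{L^{2}}^{2}$ for $\phi \in W$ and some $\lambda>0$. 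The point is that each $\phi_{i}$ need not vanish at the endpoints $p_{1},p_{2}$ (that is the whole content of the boundary term in (3)), so when I promote $\phi_{i}$ to an ambient vector field I must take $X_{i} \in \mathfrak{X}_{tan}(D^{2})$ with $\langle X_{i},\nu\rangle = \phi_{i}$ on $\gamma$ and $X_{i}$ tangent to $\partial D^{2}$ along $\partial D^{2}$; this is possible precisely because the capillary condition $\langle \eta,\overline{\nu}\rangle = -\cos\theta$ makes $\nu$ transverse to $\partial D^{2}$ at $p_{1},p_{2}$, so the prescribed boundary values of $X_{i}$ are compatible with tangency.

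Next I would define $\{F_{v}\}_{v \in \bar B^{k}}$ as the flow-type family generated by $\sum_{i} v_{i} X_{i}$, symmetrized so that $F_{0}=\mathrm{Id}$ and $F_{-v}=F_{v}^{-1}$; shrinking the domain of $v$ gives (iii). Then I would compute the Hessian at $v=0$ of $v \mapsto L^{\theta}(\Omega_{v})$: the first variation vanishes by Proposition 2.1 (since $\gamma$ is a capillary geodesic), and the second variation is the Gram matrix $\big(Q(\phi_{i},\phi_{j})\big)_{i,j}$, which is negative definite by construction. By continuity of the second derivative in $v$ and compactness of $\bar B^{k}$, after one more shrinking of the ball the function $v \mapsto L^{\theta}(\Omega_{v})$ is strictly concave, giving (ii). This part is essentially identical to \cite{K}; the only new ingredient is keeping track of the boundary terms in (3), which are automatically handled because the $X_{i}$ were chosen tangent to $\partial D^{2}$ so that the variation stays within $\mathcal{D}(D^{2})$ and the functional $L^{\theta}$ is the relevant one.

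For the last, and main, assertion — that any nearby $V = \partial^{i}\Omega'$ with $F(V,\gamma)<\epsilon$ satisfies $\max_{v} L^{\theta}(\Omega'_{v}) \ge L^{\theta}(\Omega)$ with equality only on the $F_{v}$-orbit of $\gamma$ — I would argue by contradiction using the uniform concavity. If $V$ is $F$-close to $\gamma$, then by elliptic regularity near the embedded geodesic $\gamma$ (and Proposition 2.3, relating closeness of capillary varifolds to closeness of interior varifolds), the relative boundary of $\Omega'$ is a graph over $\gamma$ of a small $C^{2}$ function, and one writes $\Omega' = \Omega_{u}$ for a small section $u$; the map $v \mapsto L^{\theta}(\Omega'_{v}) = L^{\theta}(\Omega_{u \circ F_{v}^{-1}}\text{-type})$ inherits a uniformly concave leading behavior from step two, so its maximum over the closed ball is attained in the interior and is $\ge$ the value at the ``center,'' which Taylor-expands to $L^{\theta}(\Omega)$ up to higher order; tracking signs gives the inequality, and equality forces the section to be trivial, i.e. $\Omega'_{v} = \Omega$ for some $v$. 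The hard part will be making the graphical reduction and the uniform Taylor estimate genuinely uniform in $V$ over the $F$-ball — i.e. choosing $\epsilon$ after $\beta$ so that the error terms are dominated by $\lambda$ — but this is exactly the estimate carried out in the proof of Theorem 4.1 of \cite{K}, and the capillary modification changes only the boundary contributions, which are controlled by the $C^{1}$-smallness in (iii) together with the strict convexity of $\partial D^{2}$ (so that $\kappa(p_{i})>0$ has a definite sign in (3)). I would therefore present the proof as an invocation of \cite{K}, Theorem 4.1, with these substitutions spelled out.
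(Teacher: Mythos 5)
Your construction is the same one the paper intends: the paper's entire proof is the remark that one takes the diffeomorphisms generated by vector fields in $\mathfrak{X}_{tan}(D^{2})$ whose normal components are the $k$ negative eigensections of the capillary index form (3), and then invokes the proof of Theorem 4.1 in \cite{K}. Your points (i)--(iii), the observation that the eigenfunctions can be realized as normal components of \emph{tangential} vector fields because $\theta\in(0,\pi/2)$ keeps $\nu$ non-orthogonal to $T_{p_{i}}\partial D^{2}$, and the identification of the Hessian of $v\mapsto L^{\theta}(\Omega_{v})$ at $v=0$ with the Gram matrix $\bigl(Q(\phi_{i},\phi_{j})\bigr)$ (the first variation vanishing by Proposition 2.1) are exactly the capillary bookkeeping that the citation requires, and the $\theta$-bumpy hypothesis correctly supplies nondegeneracy so that the index-$k$ negative space has a definite spectral gap.

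There is, however, one step in your sketch of the final assertion that would fail if you tried to carry it out rather than cite it: the claim that $F(V,\gamma)<\epsilon$ lets you write $\partial_{rel}\Omega'$ as a small $C^{2}$ graph over $\gamma$ ``by elliptic regularity.'' The competitor $\Omega'$ is an arbitrary domain, not a critical point, so no regularity theory applies to it, and $F$-closeness (weak varifold closeness together with mass closeness; Proposition 2.2 only transfers this from $\partial^{\theta}$ to $\partial^{i}$) does not rule out small hairs, folds, or high-frequency oscillations of small total length; such a curve is not graphical over $\gamma$, and the Taylor-expansion-at-the-center argument does not apply to it. Handling competitors that are only varifold-close, with no graphical structure, is precisely the nontrivial content of the White/Marques--Neves strong minimax property that \cite{K}, Theorem 4.1 adapts, so you cannot simultaneously describe the proof as a graphical reduction and defer its ``uniformity'' to \cite{K}. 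Deferring wholesale to \cite{K} with the substitutions you list (tangential eigensection vector fields, the boundary term $-\frac{1}{\sin\theta}\kappa(p_{i})f^{2}(p_{i})$ in (3), and the $L^{\theta}$- rather than length functional) is exactly what the paper does and is acceptable; but the elliptic-regularity/graph step should be removed, since it is not a correct account of the argument being invoked.
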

\subsection{Metric perturbation}
We elaborate the metric perturbation in \cite{K} to settle the boundary version of interpolation lemmas. Suppose we have a capillary embedded geodesic $\gamma$ and adopt the Fermi coordinate (12) on the tubular neighborhood $N_{h}(\gamma)$ (on the extended closed surface $M$) in \cite{K}:
 \begin{equation}
ds^{2} = J(x,y)^{2}dx^{2} + dy^{2}
\end{equation}
such that
\begin{enumerate} 
\item $J_{yy} = -KJ$
\item $J(x,0)=1$
\item $J_{x}(x,0) = \kappa =0$
\item $J_{y}(x,0) = 0$
\end{enumerate}
where $K$ is a Gaussian curvature and $\kappa$ is a geodesic curvature of $\gamma$. 

We perturb the metric to be foliated by mean convex curves to obtain a suitable squeezing lemma. We take $M> \max K_{(D^{2},g)}$ and small $h>0$ satisfying $ h<\max(M^{-1},1/10)$. We use the bump function $\phi_{\beta}:D^{2} \rightarrow \mathbb{R}$ constructed in \cite{K}.
\begin{prop} [\cite{K}, Proposition 5.2]
Given a capillary embedded geodesic $\gamma$ with contact angle $\theta$, for any $\beta>0$, there exists a smooth bump function $\phi_{\beta}:D^{2} \rightarrow \mathbb{R}$ such that
\begin{enumerate}[label=(\roman*)]
\item $\phi_{\beta}(x)=0$ when $x \notin N_{h}(\gamma)$
\item $\phi_{\beta}(x)<0$ for $x \in \gamma$
\item $||\exp(2\phi_{\beta}) - 1||_{C^{0}} < \beta$
\item $\nabla_{\dot{\gamma}} \phi_{\beta} =0$ on $\gamma$ [$\phi_{\beta}$ is constant on $\gamma$]
\item The Hessian $\partial^{2} \phi_{\beta} (\nu,\nu)=M$ on $\gamma$.
\end{enumerate}
\end{prop}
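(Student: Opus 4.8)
The plan is to construct $\phi_\beta$ by transplanting an explicit one-variable profile through the Fermi coordinates of $\gamma$, the only new feature compared with the closed-geodesic construction in \cite{K} being that $\gamma$ is a segment meeting $\partial D^2$ at its endpoints $p_1,p_2$. First I would extend $\gamma$ to a geodesic $\tilde\gamma$ in the ambient closed surface $M\supset D^2$ slightly beyond both endpoints, say by arclength $3h$ on each side, and set up the Fermi (semigeodesic) coordinates $(x,y)$ of $\tilde\gamma$ on a tubular neighborhood $U=(-3h,L+3h)\times(-h,h)$, where $L$ is the length of $\gamma$ and $h$ is small (as above), so that the metric has the form $ds^2=J(x,y)^2dx^2+dy^2$ with $J(x,0)=1$, $J_y(x,0)=0$, $J_x(x,0)=\kappa=0$, exactly as in (18) (the vanishing of $\kappa$ being forced by $\gamma$ being a geodesic). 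The reason for the extension is that, since $\gamma$ meets $\partial D^2$ transversally but not orthogonally, a point $p\in D^2$ with $\mathrm{dist}(p,\gamma)$ small whose nearest point on $\gamma$ is an endpoint still has Fermi $x$-coordinate within $O(h)$ of $\{0,L\}$, hence lies inside $U$; thus $U\cap D^2$ contains a full $D^2$-neighborhood of every point of $\gamma$, including the caps near $p_1,p_2$.

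Next I would define two auxiliary functions of one variable. Let $\psi:\mathbb{R}\to(-\infty,0]$ be smooth and even with $\psi(y)=\tfrac{M}{2}(y^2-\rho^2)$ for $|y|\le\rho/2$, with $\psi\equiv 0$ for $|y|\ge h$, with $\psi$ monotone on $[\rho/2,h]$, and with $\|\psi\|_{C^0}=\tfrac{M}{2}\rho^2$ attained at $y=0$; here $\rho\in(0,h)$ is a parameter to be fixed at the very end. Let $\chi:\mathbb{R}\to[0,1]$ be smooth with $\chi\equiv 1$ on $[0,L]$ and $\chi\equiv 0$ outside $(-3h,L+3h)$, so that the entire $x$-cutoff lives in the extension region $[-3h,0]\cup[L,L+3h]$ and never sees $\gamma$ itself. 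Then set $\phi_\beta:=\chi(x)\psi(y)$ on $U\cap D^2$ and $\phi_\beta:=0$ on $D^2\setminus U$. Since $\psi$ together with all its derivatives vanishes for $|y|\ge h$ and $\chi$ together with all its derivatives vanishes near the ends of $(-3h,L+3h)$, the extension is smooth on $D^2$, and a short distance estimate gives $\mathrm{supp}\,\phi_\beta\subset N_{ch}(\gamma)$ for a universal constant $c$, so after rescaling $h$ at the outset we obtain property (i).

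The remaining properties are then immediate from the placement of the cutoffs. On $\gamma$ we have $y=0$ and $x\in[0,L]$, hence $\chi\equiv 1$ and $\phi_\beta\equiv\psi(0)=-\tfrac{M}{2}\rho^2<0$, which gives (ii) and (iv) (the tangential derivative $\partial_x(\chi\psi)=\chi'\psi$ vanishes on $\gamma$ since $\chi'\equiv 0$ there, so $\phi_\beta$ is constant on $\gamma$). For (v), the $y$-curves $\{x=\mathrm{const}\}$ are unit-speed geodesics, so $\nabla_\nu\nu\equiv 0$ along $\gamma$ with $\nu=\partial_y$, whence $\partial^2\phi_\beta(\nu,\nu)=\partial_y^2\phi_\beta|_{y=0}=\chi(x)\psi''(0)=M$ on $\gamma$. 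Finally, since $\phi_\beta$ takes values in $[-\tfrac{M}{2}\rho^2,0]$ on all of $D^2$, we get $\|\exp(2\phi_\beta)-1\|_{C^0}\le 1-e^{-M\rho^2}<M\rho^2$, so choosing $\rho<\sqrt{\beta/M}$ yields (iii).

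The one genuinely delicate point is the first paragraph: verifying that the extended Fermi chart of $\tilde\gamma$ really does cover a full $D^2$-neighborhood of all of $\gamma$ despite the non-orthogonal contact at $p_1,p_2$, and arranging the $x$-cutoff so that on $\gamma$ only the clean profile $\psi(y)$ is visible (which is what lets (ii), (iv), (v) hold on the whole of $\gamma$ rather than on a proper subsegment). Once this is in place, the construction and all the estimates are identical to those in \cite{K}.
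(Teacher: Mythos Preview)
The paper does not supply its own proof of this proposition: it is stated with the attribution ``[\cite{K}, Proposition 5.2]'' and the preceding sentence simply says ``We use the bump function $\phi_\beta:D^2\to\mathbb{R}$ constructed in \cite{K}.'' So there is no in-paper argument to compare against; the paper is invoking the construction from the closed-geodesic setting and implicitly claiming it transfers.

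Your construction is correct and is precisely the adaptation the paper is tacitly relying on. The one genuinely new ingredient over \cite{K}---that $\gamma$ is a segment meeting $\partial D^2$ transversally rather than a closed curve---you handle exactly right by extending $\gamma$ to a geodesic $\tilde\gamma$ in the ambient closed surface $M\supset D^2$ and placing the $x$-cutoff $\chi$ entirely in the extension region, so that on $\gamma$ itself only the normal profile $\psi(y)$ is visible and (ii), (iv), (v) hold on all of $\gamma$ including the endpoints. The support estimate $\mathrm{supp}\,\phi_\beta\subset N_{ch}(\gamma)$ and the subsequent rescaling of $h$ are also fine. One small remark: your $\psi$ is even, so in addition to the listed properties you get $\nabla_\nu\phi_\beta=\partial_y\phi_\beta|_{y=0}=0$ on $\gamma$; this is not among (i)--(v) but is exactly what the paper uses in the very next lemma (Lemma 5.3, equation (20)) to control the boundary geodesic curvature under the conformal change, so it is worth recording.
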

For a fixed $\beta >0$, let us consider the metric $g_{\beta}=\exp(2 \phi_{\beta}) g$ whose Fermi coordinate representation is given by  
\begin{equation}
ds_{\beta}^{2} = J(x,y)^{2}\exp(2 \phi_{\beta}) dx^{2} +\exp (2 \phi_{\beta}) dy^{2}
\end{equation}
and we consider a canonical diffeomorphism $G_{\beta}: (D^{2},g) \rightarrow (D^{2},g_{\beta})$ given by $G_{\beta}(z) \equiv z$ for all $z \in D^{2}$. Let $\gamma_{g_{\beta}} := G_{\beta}(\gamma)$ and $N_{h,g_{\beta}}(\gamma_{g_{\beta}}) : = G_{\beta}(N_{h}(\gamma_{g_{\beta}}))$ in $(D^{2},g_{\beta})$. The following is a boundary version of Lemma 5.3 in \cite{K} and it also ensures the conservation of the convexity of the boundary along diffeomorphism $G$.
\begin{lem} 
For any $\beta>0$, $\gamma_{g_{\beta}}$ is a capillary embedded geodesic with contact angle $\theta$. The Gaussian curvature satisfies $K_{g_{\beta}}(z)<0$ for $z \in \gamma_{g_{\beta}}$ on $(D^{2},g_{\beta})$. Moreover, for two endpoints $z_{1}, z_{2}$ of $\gamma_{g_{\beta}}$ on $\partial D^{2}$, the geodesic curvature $\kappa_{\partial D^{2}}$ of $\partial D^{2}$ satisfies $\kappa_{\partial D^{2}}(z_{1}), \kappa_{\partial D^{2}}(z_{2})>0$.
\begin{proof}
Since $G$ is a conformal deformation and by the arguments in Lemma 5.3 in \cite{K}, $\gamma_{g_{\beta}}$ is a capillary embedded geodesic with contact angle $\theta$ and Gaussian curvature is negative on $\gamma_{g_{\beta}}$. Consider $\partial D^{2}$ in the extended surface $M$. By the calculation of the change of the geodesic curvature by conformal deformation in Besse \cite{Be}, we have
\begin{equation}
    \kappa_{g_{\beta}}(z_{i}) = e^{-\phi_{\beta}}\Big(\kappa_{g}(z_{i})-\frac{\partial \phi_{\beta}}{\partial \nu}\Big)
\end{equation}
for $i=1,2$. Note that $\nabla_{\nu}\phi_{\beta}=0$ by the construction of $\phi_{\beta}$ in Proposition 5.2. Hence $\kappa_{g_{\beta}}(z_{i})>0$ for $i=1,2$ by the convexity of the boundary.
\end{proof}
\end{lem}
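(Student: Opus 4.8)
The plan is to deduce all three assertions from the single fact that $g_{\beta}=\exp(2\phi_{\beta})g$ is conformal to $g$, combined with the special structure of $\phi_{\beta}$ recorded in Proposition 5.2. The crucial preliminary observation I would establish is that $\nabla\phi_{\beta}\equiv 0$ along $\gamma$: property (iv) gives $\nabla_{\dot\gamma}\phi_{\beta}=0$ on $\gamma$, and the construction of $\phi_{\beta}$ additionally makes its normal profile about $\gamma$ even (so that only even-order normal terms occur, the quadratic one being fixed by (v)), whence $\nabla_{\nu}\phi_{\beta}=0$ on $\gamma$ as well; since $\{\dot\gamma,\nu\}$ is an orthonormal frame along the curve, the full gradient vanishes there. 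In particular $\nabla\phi_{\beta}=0$ at the two endpoints $z_{1},z_{2}\in\gamma\cap\partial D^{2}$, and, since $G_{\beta}=\mathrm{id}$, the set $\gamma_{g_{\beta}}=G_{\beta}(\gamma)$ coincides with $\gamma$.

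For the first assertion, embeddedness of $\gamma_{g_{\beta}}$ is immediate since $G_{\beta}$ is the identity, and the contact angle is unchanged because a conformal change of metric preserves angles, so the angle at each endpoint between $\gamma_{g_{\beta}}$ and $\partial D^{2}$ measured in $g_{\beta}$ is still $\theta$ (and the sign convention $\langle\eta,\overline\nu\rangle=-\cos\theta$ is unaffected, being orientation-determined). That $\gamma_{g_{\beta}}$ is a $g_{\beta}$-geodesic follows from the two-dimensional conformal transformation law for the geodesic curvature of a curve (see Besse \cite{Be}): at a point of $\gamma$ this reads $\kappa_{g_{\beta}}=e^{-\phi_{\beta}}(\kappa_{g}+\nabla_{\nu}\phi_{\beta})$, and both terms on the right vanish on $\gamma$ — the first because $\gamma$ is a $g$-geodesic and the second by the preliminary observation — so $\kappa_{g_{\beta}}\equiv 0$ on $\gamma_{g_{\beta}}$.

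For the negativity of $K_{g_{\beta}}$, I would apply the Gauss curvature law $K_{g_{\beta}}=e^{-2\phi_{\beta}}(K_{g}-\Delta_{g}\phi_{\beta})$ and evaluate $\Delta_{g}\phi_{\beta}$ on $\gamma$ by taking the trace of the Hessian in the frame $\{\dot\gamma,\nu\}$. The tangential component $\mathrm{Hess}_{g}\phi_{\beta}(\dot\gamma,\dot\gamma)=\tfrac{d^{2}}{ds^{2}}(\phi_{\beta}\circ\gamma)-(\nabla_{\dot\gamma}\dot\gamma)\phi_{\beta}$ vanishes on $\gamma$ since $\gamma$ is a geodesic and $\phi_{\beta}\circ\gamma$ is constant by (iv), while the normal component is $M$ by (v); hence $\Delta_{g}\phi_{\beta}=M$ on $\gamma$, and since $M>\max K_{(D^{2},g)}$ we get $K_{g_{\beta}}=e^{-2\phi_{\beta}}(K_{g}-M)<0$ on $\gamma_{g_{\beta}}$. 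For the boundary geodesic curvature at $z_{1},z_{2}$ (working in the ambient closed surface $M$ in which $D^{2}$ sits), the same conformal transformation law gives $\kappa_{g_{\beta}}(z_{i})=e^{-\phi_{\beta}(z_{i})}(\kappa_{g}(z_{i})-\partial_{\overline\eta}\phi_{\beta}(z_{i}))$ with $\overline\eta$ the outward unit normal of $D^{2}$ along $\partial D^{2}$; since $z_{i}\in\gamma$ and $\nabla\phi_{\beta}(z_{i})=0$, the normal-derivative term drops out, leaving $\kappa_{g_{\beta}}(z_{i})=e^{-\phi_{\beta}(z_{i})}\kappa_{g}(z_{i})>0$ by strict convexity of $\partial D^{2}$.

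The step I expect to require the most care is not any of these formal manipulations but the justification that the bump function $\phi_{\beta}$ of Proposition 5.2 behaves as claimed all the way up to the endpoints: one must check that the Fermi tubular neighbourhood $N_{h}(\gamma)$ and its coordinates extend smoothly to neighbourhoods of $z_{1},z_{2}$, where $\gamma$ meets $\partial D^{2}$ at the oblique angle $\theta$ rather than orthogonally, and that $\phi_{\beta}$ can simultaneously be made constant on $\gamma$, have vanishing normal derivative on $\gamma$, and carry the prescribed normal Hessian, including at those corner points — this is exactly what makes $\nabla\phi_{\beta}\equiv 0$ on all of $\gamma$ legitimate in the boundary setting. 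Once that is secured, everything else is a direct application of the classical conformal transformation formulas. (If one additionally wants the stronger assertion that $G_{\beta}$ preserves convexity of all of $\partial D^{2}$, one needs $|\partial_{\overline\eta}\phi_{\beta}|<\kappa_{g}$ on $\partial D^{2}\cap N_{h}(\gamma)$, which can be arranged by taking $\beta$, hence $h$, small; this is not needed for the lemma as stated.)
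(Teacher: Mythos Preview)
Your proof is correct and follows essentially the same approach as the paper: both exploit that $g_{\beta}=e^{2\phi_{\beta}}g$ is conformal, use the standard conformal transformation laws for geodesic and Gaussian curvature, and invoke the vanishing of $\nabla_{\nu}\phi_{\beta}$ on $\gamma$ (from Proposition 5.2) to kill the correction terms. The paper simply defers the first two assertions to the corresponding lemma in \cite{K} and then writes out the boundary-curvature step exactly as you do; you have unpacked the cited argument explicitly, and your added remark on extending the Fermi coordinates and $\phi_{\beta}$ to the corner points is a legitimate technical caveat that the paper leaves implicit.
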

\subsection{Identifying the endpoints by sliding the curves} We introduce the deformation of the family of domains $\{ \Phi^{j} \}$ to identify all endpoints of relative boundaries to be $\partial \gamma$ with an $F$-distance upper bound along the deformation. We prove this sliding deformation to apply the curve shortening flow with fixed boundary in the procedure to construct squeezing deformation in the next subsection.
\begin{lem}
Let $(D^{2},\partial D^{2},g)$ be a Riemannian disk with $\theta$-bumpy metric and a strictly convex boundary. Let $\gamma = \partial^{i} \Omega$ be a fixed embedded capillary geodesic with contact angle $\theta$ and corresponding region $\Omega$, and $X$ be a finite dimensional simplicial complex. Then there is $\delta_{0}>0$ satisfying the following property: For $0<\delta <\delta_{0}$, if $\Phi:X \rightarrow \mathcal{D}(D^{2})$ satisfying
\begin{equation}
    F(\partial^{i}\Phi(x),\gamma) < \delta,
\end{equation}
then there is a homotopy $H:[0,1] \times X \rightarrow \mathcal{D}(D^{2})$ such that
\begin{equation}
    F(\partial^{i}H(t,x),\gamma) < C(\theta) \sqrt{\delta}
\end{equation}
and two endpoints satisfy $\pi_{i} \partial (H(1,x)) = \pi_{i} \partial( \Omega)$ for any $x \in X$ and $i= 1,2$.
\begin{proof}
    Consider the tubular neighborhood $N_{h}(\gamma)$ for small $h$. Then since $\gamma$ and $\partial M$ intersect with an angle $\theta$, there exists $h_{0}>0$ such that for $0<h<h_{0}$, the Hausdorff distance in $\partial D^{2}$ satisfies
    \begin{equation}
        d_{\mathcal{H}, \partial D^{2}}(\partial \gamma, \partial N_{h}(\gamma) \cap \partial D^{2})< \frac{2h}{\sin \theta}.
    \end{equation}
    Let us set $\delta_{0} = h_{0}^{2}/10$. By (23) and Lemma 3.1 in \cite{K}, $\partial^{i} \Phi(x)$ is supported in $N_{\sqrt{10\delta}}(\gamma) \subset N_{h_{0}}(\gamma)$.
    
    We add two small segments connecting between endpoints of $\partial^{i} \Phi(x)$ and those of $\gamma$ along $\partial D^{2}$ and push this curve slightly to the interior of $D^{2}$, and consider the corresponding domain. Then we connect the family of initial domains $\Phi(x)$ and the new family of domains explained above with small additional segments via homotopy.

Now we consider the exponential map $\exp_{p}: T_{p} \partial D^{2} \rightarrow \partial D^{2}$ on $\partial D^{2}$. For $x \in X$, $i=1,2$ and $t \in [0,1]$, we define two segments $\gamma_{i,t}$ by segments connecting $\pi_{i} \partial \Phi(x)$ and $\exp_{\pi_{i}\partial \Omega} [(1-t)(\exp_{\pi_{i}\partial \Omega})^{-1} \pi_{i} \partial \Phi(x)]$ along $\partial D^{2}$. We define a homotopy $H'(t,x)$ by domains satisfying
\begin{equation}
    \partial^{i} H'(t,x)= \gamma_{1,t} *\partial^{i} \Phi(x) * \gamma_{2,t}
\end{equation}
for $i=1,2$. We regard $\gamma_{i,t}$ as curves in $\mathring{D}^{2}$ here. Then now we prove the $F$-distance estimate for this homotopy. For $f: G_{1}(D^{2}) \rightarrow \mathbb{R}$ satisfying $|f| \le 1$ and $Lip(f) \le 1$ (we regard $f$ on $G_{1}(D^{2} \setminus  int(D^{2}))$ as $f$ on the extended surface $M$ here),
\begin{align}
    \Big| \int f d(\partial^{i}H'(t,x)) - \int f d\gamma \Big| &\le \Big| \int f d(\partial^{i}H'(t,x)) - \int f d(\partial^{i}\Phi(x)) \Big| + \Big| \int f d(\partial^{i}\Phi(x)) - \int f d\gamma \Big| \nonumber \\
    &\le  \Big| \int f d \gamma_{1,t} + \int f d \gamma_{2,t}\Big| + \delta \\
    &\le \nonumber |\gamma_{1,t}| + |\gamma_{2,t}| + \delta \\
    &<  \frac{4 \sqrt{10 \delta}}{\sin \theta} + \delta \\ &< C(\theta) \sqrt{\delta} \nonumber, 
\end{align}
where (25) follows from (21), (24), and (26) comes from (23) and the fact that $\partial^{i}\Phi(x)$ is supported in $N_{\sqrt {10 \delta}} (\gamma)$. Then by the definition of the $F$-distance, we have the following $F$-distance estimate
\begin{equation}
    F(\partial^{i}H'(t,x),\gamma) < C(\theta) \sqrt{\delta}.
\end{equation}

At last, we push the curves along the homotopy $\partial^{i}H'(t,x)$ slightly into the interior of $D^{2}$, and define a corresponding homotopy of domains by $H(t,x)$. By (27), the $F$-distance estimate (22) holds and $H(t,x)$ is a desired homotopy.
\end{proof}
\end{lem}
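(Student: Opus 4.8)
The plan is to construct $H$ by moving, for each $x\in X$, the two endpoints of $\partial^{i}\Phi(x)$ along $\partial D^{2}$ onto the fixed endpoints $\partial\gamma=\{p_{1},p_{2}\}$ of $\gamma$ (with $p_{i}=\pi_{i}\partial(\Omega)$), grafting a short boundary arc at each end so that the deformed curves still bound domains in $\mathcal{D}(D^{2})$, and bounding the cost of the deformation by the lengths of those arcs. The first ingredient is the localization. By the $F$-distance versus Hausdorff-distance comparison of Lemma 3.1 in \cite{K}, applied to the interior varifold $\partial^{i}\Phi(x)$ using its positive lower density, the hypothesis $F(\partial^{i}\Phi(x),\gamma)<\delta$ forces $\mathrm{spt}(\partial^{i}\Phi(x))\subset N_{\sqrt{10\delta}}(\gamma)$; choosing $\delta_{0}=h_{0}^{2}/10$ with $h_{0}$ so small that $N_{h_{0}}(\gamma)$ is a genuine tubular neighborhood of $\gamma$ meeting $\partial D^{2}$ only in two small arcs, one about each endpoint, keeps everything inside $N_{h_{0}}(\gamma)$. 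Since $\gamma$ crosses $\partial D^{2}$ transversally at the angle $\theta\in(0,\pi/2)$, one has the elementary inclusion $N_{h}(\gamma)\cap\partial D^{2}\subset N_{2h/\sin\theta,\partial D^{2}}(\partial\gamma)$, so each endpoint of $\partial^{i}\Phi(x)$ lies within distance $2\sqrt{10\delta}/\sin\theta$ of an endpoint of $\gamma$ measured along $\partial D^{2}$. Two bookkeeping facts I would record at this point, both needing $\delta_{0}$ small: the mass term of the $F$-metric gives $\bigl||\partial^{i}\Phi(x)|-|\gamma|\bigr|<\delta$, which rules out $\partial^{i}\Phi(x)$ doubling back (both its endpoints near the same $p_{i}$ would make its length close to $2|\gamma|$), so it has exactly one endpoint near $p_{1}$ and one near $p_{2}$; and the endpoints of $\partial^{i}\Phi(x)$ are labelled by $\pi_{1},\pi_{2}$ consistently with $\Omega$, that is, $d_{\partial D^{2}}(\pi_{i}\partial\Phi(x),p_{i})\le 2\sqrt{10\delta}/\sin\theta$ for $i=1,2$ and all $x$ (this uses the convention fixing $\pi_{1},\pi_{2}$ and $\delta_{0}$ small). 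Note also that $\Phi(x)\notin\{\emptyset,D^{2}\}$ for $\delta<\delta_{0}$, since otherwise $F(\partial^{i}\Phi(x),\gamma)=|\gamma|$.

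Next I would define the homotopy. Using the boundary exponential maps $\exp_{p_{i}}\colon T_{p_{i}}\partial D^{2}\to\partial D^{2}$, put $q_{i}(x,t):=\exp_{p_{i}}\bigl[(1-t)\exp_{p_{i}}^{-1}(\pi_{i}\partial\Phi(x))\bigr]$ for $i=1,2$ and $t\in[0,1]$, which slides $\pi_{i}\partial\Phi(x)$ (at $t=0$) to $p_{i}$ (at $t=1$) along $\partial D^{2}$ at constant speed, and let $\gamma_{i,t}$ be the sub-arc of $\partial D^{2}$ from $\pi_{i}\partial\Phi(x)$ to $q_{i}(x,t)$. Let $H'(t,x)$ be the domain whose relative boundary is the concatenation $\gamma_{1,t}*\partial^{i}\Phi(x)*\gamma_{2,t}$, and let $H(t,x)\in\mathcal{D}(D^{2})$ be obtained from it by pushing the two grafted arcs slightly into $\mathring{D}^{2}$, by a small amount that vanishes at $t=0$ and is controlled by $\sqrt{\delta}$, keeping their endpoints fixed on $\partial D^{2}$, so that the relative boundary of $H(t,x)$ is an embedded segment touching $\partial D^{2}$ only at $q_{1}(x,t),q_{2}(x,t)$. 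Then $H(0,\cdot)=\Phi$ (the grafted arcs are trivial and the push vanishes at $t=0$), and at $t=1$ the endpoints are exactly $p_{1},p_{2}$, giving $\pi_{i}\partial(H(1,x))=\pi_{i}\partial(\Omega)$ for $i=1,2$. Continuity of $H$ in $(t,x)$ follows from continuity of $x\mapsto\partial^{i}\Phi(x)$, hence of $x\mapsto\pi_{i}\partial\Phi(x)$, together with continuity of $(x,t)\mapsto q_{i}(x,t)$ and of the grafting and pushing operations; compactness of $X$ lets the inward perturbation be chosen with uniform control.

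For the distance bound I would test against $f\colon G_{1}(D^{2})\to\mathbb{R}$ with $|f|\le1$ and $\mathrm{Lip}(f)\le1$. By the triangle inequality and the hypothesis, $|\int f\,d(\partial^{i}H'(t,x))-\int f\,d\gamma|\le|\int f\,d\gamma_{1,t}|+|\int f\,d\gamma_{2,t}|+|\int f\,d(\partial^{i}\Phi(x))-\int f\,d\gamma|$, and the right-hand side is at most $|\gamma_{1,t}|+|\gamma_{2,t}|+\delta\le 4\sqrt{10\delta}/\sin\theta+\delta<C(\theta)\sqrt{\delta}$, where I used $|\gamma_{i,t}|\le d_{\partial D^{2}}(\pi_{i}\partial\Phi(x),p_{i})\le 2\sqrt{10\delta}/\sin\theta$ from the localization step; the mass-difference part of the $F$-metric is bounded the same way. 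Since the inward perturbation alters each integral by an amount small compared with $\sqrt{\delta}$, one gets $F(\partial^{i}H(t,x),\gamma)<C(\theta)\sqrt{\delta}$ for every $(t,x)$ after enlarging $C(\theta)$ once, which is the asserted estimate.

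The step I expect to be the genuine obstacle is checking that, for every $(t,x)$, the concatenation $\gamma_{1,t}*\partial^{i}\Phi(x)*\gamma_{2,t}$ and its inward perturbation are \emph{embedded} and actually bound a domain in $\mathcal{D}(D^{2})$. The two grafted arcs are disjoint for $\delta_{0}$ small (they sit about $p_{1}$ and $p_{2}$ respectively) and lie on $\partial D^{2}$, so before the push they meet $\partial^{i}\Phi(x)$ only at $\pi_{i}\partial\Phi(x)$; the delicate point is that pushing $\gamma_{i,t}$ into $\mathring{D}^{2}$ must not create a crossing with $\partial^{i}\Phi(x)$ near its endpoint. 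For this I would use that $\partial^{i}\Phi(x)$ is Hausdorff $\sqrt{10\delta}$-close to $\gamma$ while $\gamma$ leaves $p_{i}$ along a ray into $\mathring{D}^{2}$ at angle $\theta$, so $\partial^{i}\Phi(x)$ is $C^{0}$-trapped near its endpoint in a thin tube about that ray and is therefore, in a quantitative sense, transverse to $\partial D^{2}$ there. Pinning down ``thin'', ``near'', and ``transverse'' in Fermi coordinates adapted to $\gamma$ is precisely what forces $\delta_{0}$ to depend on $\gamma$ and $\theta$, and is the one genuinely technical point; the $\pi_{1}$ versus $\pi_{2}$ bookkeeping from the first paragraph is a milder instance of the same mechanism.
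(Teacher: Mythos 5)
Your proposal follows essentially the same route as the paper's proof: the same localization via Lemma 3.1 of \cite{K} with $\delta_{0}=h_{0}^{2}/10$ and the $2h/\sin\theta$ transversality estimate, the same sliding of endpoints along $\partial D^{2}$ by the boundary exponential map with grafted arcs $\gamma_{i,t}$, the same test-function bound $4\sqrt{10\delta}/\sin\theta+\delta<C(\theta)\sqrt{\delta}$, and the same final inward push. Your additional bookkeeping (ruling out doubling back, endpoint labelling) and your explicit flagging of the embeddedness of the pushed concatenation are reasonable refinements of points the paper treats implicitly, but they do not change the argument.
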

\subsection{Squeezing lemma} Now we prove the capillary version of squeezing lemma, whose simple closed curve version is Lemma 5.4 in \cite{K}. The main difference with simple closed curve version is the curve shortening flow part. We apply the curve shortening flow with  fixed boundary in Appendix B instead of curve shortening flow for closed curves. We apply Lemma 5.4 to identify the endpoints to squeeze whole families to a single capillary curve, provided a capillary embedded geodesic whose ambient Gaussian curvature is negative on the geodesic. 

The following technical lemma shows the existence of mean convex foliation of $N_{h}(\gamma)$ where fixed points are two endpoints of $\gamma$, in case of a geodesic is strictly stable and achieves a negative ambient Gaussian curvature.  
\begin{lem} There exists $h_{1}>0$ such that for $0<h<h_{1}$, there exists a foliation by curves with fixed endpoints $\{ S_{t} \}_{t \in [0,h]}$ of $N_{h}(\gamma)$ satisfying the following
\begin{enumerate}
    \item $S_{0} = \gamma$,
    \item For $t \in (0,h]$, each leave $S_{t}$ consists of two embedded curves whose endpoints are two endpoints of $\gamma$,
    \item For $t \in (0,h]$, the curvature vector of $S_{t}$ points toward $\gamma$.
\end{enumerate}
\begin{proof}
    We consider $h< \min (inj(D^{2}),h_{0}) $ and set $\partial \gamma = \{ p_{1}, p_{2} \}$, where $h_{0}$ is a constant in the proof of Lemma 5.4 satisfying (23). Note that $\gamma$ has a (rounded) trapezoidal tubular neighborhood $N_{h}(\gamma)$.  For small $h>0$, $\partial N_{h}(\gamma) \cap \partial D^{2}$ consists of four points, where two points are on the neighborhood of $p_{1}$ and two points are on the neighborhood of $p_{2}$. We denote $\partial N_{h}(\gamma) \cap \partial D^{2} = \{ q_{1,1,h},q_{1,2,h}, q_{2,1,h}, q_{2,2,h} \}$, where $q_{1,1,h} , q_{1,2,h} \in N_{2h/ \sin \theta,\partial M}(p_{1})$ and $q_{2,1,h} , q_{2,2,h} \in N_{2h/ \sin \theta,\partial M}(p_{2})$. We label points $q_{i,j,h}$ to be $q_{1,j,h}$ and $q_{2,j,h}$ are on the same side of $\gamma$ for $j=1,2$. Without loss of generality, the angle between a unit tangent vector $v_{i}$ of $\gamma$ at $p_{i}$ toward $int(D^{2})$ and a unit tangent vector $w_{i}$ of $\partial D^{2}$ at $p_{i}$ toward $q_{i,1,h}$ is an acute contact angle $\theta \in (0, \pi/2)$. Then the angle between $v_{i}$ and $-w_{i}$ which is a unit tangent vector of $\partial M$ at $p_{i}$ toward $q_{i,2,h}$ is $\pi - \theta$.  

    For some $\delta>0$, we adopt the Fermi coordinate $c:[-\delta,|\gamma|+ \delta] \times (-h,h) \rightarrow M$ on $N_{h}(\gamma)$  in (18), where $c(0,0) = p_{1}$ and $c(|\gamma|,0) = p_{2}$.  We set $\delta>0$ here to satisfy $N_{h}(\gamma) \subset c([-\delta,|\gamma|+ \delta] \times (-h,h))$. Also we define $\pi_{y}: [-\delta,|\gamma|+ \delta] \times (-h,h) \rightarrow (-h,h)$ by $y$-coordinate function of Fermi coordinate $c$. Without loss of generality, we take Fermi coordinate $c$ to satisfy $\pi_{y} c^{-1} q_{i,1,h}>0$ and $\pi_{y} c^{-1} q_{i,2,h} <0$ for $i=1,2$.

    We divide $N_{h}(\gamma)$ into four disjoint small convex regions $\{ R_{i,j,h} \}_{i,j \in \{1,2 \}}$ where each $R_{i,j,h}$ contains $q_{i,j,h}$, respectively, and the interior region $N_{h}(\gamma) \setminus \cup_{i,j=1,2} R_{i,j,h}$. We construct the foliations on $N_{h}(\gamma) \setminus \cup_{i,j=1,2} R_{i,j,h}$ and $R_{i,j,h}$'s separately and join those to a combined foliation.

    For $i=1,2$, let us connect $p_{i}$ and $q_{i,1,h}$ by geodesics $\alpha_{i,1,h}$. We define the small regions enclosed by $\partial D^{2}$ and $\alpha_{i,1,h}$ by $R_{i,1,h}$. Also we define two vertical geodesics $\alpha_{1,2,h}:= c(\{ 0 \} \times [-h,0])$ and $\alpha_{2,2,h} : = c(\{ |\gamma| \} \times [-h,0])$. We denote the small regions enclosed by $\partial D^{2}$, $\alpha_{i,2,h}$ and $\partial N_{h}(\gamma)$ by $R_{i,2,h}$.

    Let us construct the foliation in the interior part first.  As in \cite{K}, for $t \in [-h,h]$ we take
    \begin{equation}
    \gamma_{int,t,h}:= c( \{y=t \}) \cap (N_{h} \setminus \cup_{i,j=1,2} R_{i,j,h}).
    \end{equation}
    There exists $h'>0$ such that for $0<h<h'$, by the arguments in Section 5.3 of \cite{K}, the curvature vector of each leave $\{ \gamma_{int,t,h} \}_{t \in [-h,h]}$ points toward into $\gamma$.

    Now we consider each small boundary region $R_{i,j,h}$ for $i,j \in \{ 1,2 \}$. Note that $R_{i,j,h}$ is a convex region so that is star-shaped. We foliate $R_{i,j,h}$ by scaled curves of parts of $\partial R_{i,j,h} \setminus \alpha_{i,j,h}$. For $s \in [0,1]$, let us define
    \begin{equation}
        \gamma_{i,j, s,h} = \exp _{p_{i}}(s(\exp_{p_{i}})^{-1}(\partial R_{i,j,h} \setminus \alpha_{i,j,h})),
    \end{equation}
    which foliates $R_{i,j,h}$ with a fixed point $p_{i}$ by star-shapedness of $R_{i,j,h}$ and $h< inj(D^{2})$. Since $\partial \tilde{B}_{r}(p_{i})$ is convex for small $r>0$, $\partial D^{2}$ is convex, and each $\gamma_{i,j,s,h}$ consists of the scaled curve of these based on $p_{i}$, there exists $h''>0$ such that each leave $\gamma_{i,j,s,h}$ has a curvature vector points toward $\gamma$ for $0<h<h''$. 
    
    Now we connect the foliations we constructed in (28) and (29). Let us parametrize $\alpha_{i,1,h}$ by the regular curve by $\alpha_{i,1,h}: [0, 1] \rightarrow D^{2}$ where $\alpha_{i,1,h}(t) = \exp _{p_{i}}(t(\exp_{p_{i}})^{-1}(q_{i,1,h}))$ for $i=1,2$.  Since $\gamma$ and $\partial D^{2}$ meets with capillary angle $\theta$ transversely and $\partial D^{2}$ is convex, there exists $h'''>0$ such that for $0<h<h'''$, $y_{\alpha_{i,1,h}}:=\pi_{y} c^{-1} \alpha_{i,1,h}: [0, 1] \rightarrow [0,h]$ is a strictly increasing function. We define the foliation $\{ \gamma_{t,h} \}_{t \in [0,h]}$ by
    \begin{equation}
        \gamma_{t,h} := (\gamma_{1,1,y_{\alpha_{1,1,h}}^{-1}(t),h}* \gamma_{int,t,h}*\gamma_{2,1,y_{\alpha_{2,1,h}}^{-1}(t),h}) \cup (\gamma_{1,2,t/h,h}* \gamma_{int,-t,h}*\gamma_{2,2,t/h,h}).
    \end{equation}
    Then we take $h_{1} = \min \{inj(D^{2}),h_{0},h',h'',h''' \}$. By taking $S_{t} = \gamma_{t,h}$ in (30), by constructions in (28) and (29), the foliation by curves with fixed endpoints $\{ S_{t} \}_{t \in [0,h]}$ satisfies conditions (1), (2) and (3) for $0<h<h_{1}$.
\end{proof}
\end{lem}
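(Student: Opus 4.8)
The plan is to assemble the foliation $\{S_t\}_{t\in[0,h]}$ by gluing a \emph{central} family, coming from the Fermi coordinate level curves $\{y=t\}$, to four \emph{corner} families obtained by radially scaling arcs near the two endpoints $p_1,p_2=\partial\gamma$. First I would fix the Fermi coordinates adapted to $\gamma$ in the extended surface $M$ as in (18), so $ds^{2}=J^{2}dx^{2}+dy^{2}$ with $J(x,0)=1$, $J_x(x,0)=J_y(x,0)=0$, $J_{yy}=-KJ$. Since $\gamma$ is a capillary geodesic with $K<0$ along it (Lemma 5.3), $J_{yy}(x,0)>0$, so for $h$ small the curves $\{y=t\}$ are strictly convex toward $\gamma$; this is exactly the interior computation already carried out in \cite{K}. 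The genuinely new difficulty is that $\gamma$ meets the strictly convex $\partial D^{2}$ at the non-right angle $\theta$, so $N_h(\gamma)$ is a \emph{rounded trapezoid} rather than a coordinate rectangle: near each $p_i$ the boundary slices obliquely across, and the curves $\{y=t\}\cap N_h(\gamma)$ fail to have their endpoints at $p_1,p_2$. Repairing this is the whole point of the lemma.

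So I would split $N_h(\gamma)$ into the interior region $N_h(\gamma)\setminus\bigcup_{i,j}R_{i,j,h}$ and four small convex corner regions $R_{i,j,h}$, with $i$ indexing the endpoint and $j$ the side of $\gamma$: one bounded by $\partial D^{2}$ and a short geodesic $\alpha_{i,1,h}$ from $p_i$ on the obtuse side, one cut off by the vertical Fermi segment $\alpha_{i,2,h}$ on the acute side. On the interior I foliate by the truncated $\{y=t\}$ curves; on each $R_{i,j,h}$, which for $h$ small is convex, hence star-shaped about $p_i$ and of radius $<inj(D^{2})$, I foliate by the radial scalings $\exp_{p_i}\!\big(s\,(\exp_{p_i})^{-1}(\partial R_{i,j,h}\setminus\alpha_{i,j,h})\big)$, $s\in[0,1]$. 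Because the geodesic circles $\partial\tilde{B}_r(p_i)$ and $\partial D^{2}$ are strictly convex for small $r$, each such leaf has curvature vector pointing toward $p_i$, hence toward $\gamma$, and carries $p_i$ as a fixed endpoint. Finally I glue: along $\alpha_{i,1,h}$ the $y$-coordinate is strictly monotone (since $\gamma$ meets $\partial D^{2}$ transversally at angle $\theta$ and $\partial D^{2}$ is convex), so reparametrizing the acute-side corner by $t=hs$ and the obtuse-side corner by $t=y_{\alpha_{i,1,h}}(s)$ I set, for $t\in[0,h]$,
\begin{equation*}
S_t=\big(\gamma_{1,1,y_{\alpha_{1,1,h}}^{-1}(t),h}*\gamma_{int,t,h}*\gamma_{2,1,y_{\alpha_{2,1,h}}^{-1}(t),h}\big)\cup\big(\gamma_{1,2,t/h,h}*\gamma_{int,-t,h}*\gamma_{2,2,t/h,h}\big),
\end{equation*}
a union of two embedded arcs from $p_1$ to $p_2$ degenerating to $\gamma$ at $t=0$.

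To conclude I would take $h_1=\min\{inj(D^{2}),h_0,h',h'',h'''\}$, with $h_0$ the constant from Lemma 5.4 and $h',h'',h'''$ the thresholds below which, respectively, the interior level curves are convex toward $\gamma$, the corner scalings are convex toward $\gamma$, and $y_{\alpha_{i,1,h}}$ is strictly increasing; properties (1)--(3) are then immediate from the construction. I expect the main obstacle to be the gluing: one must check that the two reparametrizations agree continuously along all of $\alpha_{i,j,h}$ so that $\{S_t\}$ is an honest continuous foliation covering $N_h(\gamma)$ exactly once, that each concatenated leaf is genuinely a union of two \emph{embedded} arcs (no self-intersection introduced at the interior--corner interface), and that the corner regions $R_{i,j,h}$ can be chosen uniformly small enough that all the convexity and monotonicity estimates hold simultaneously with the interior estimate. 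Everything else reduces to the negativity of $K$ on $\gamma$ and the strict convexity of $\partial D^{2}$.
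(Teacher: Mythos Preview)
Your proposal is correct and follows essentially the same approach as the paper: split $N_h(\gamma)$ into an interior piece foliated by the Fermi level curves $\{y=t\}$ and four corner regions $R_{i,j,h}$ foliated by radial scalings through the exponential map at $p_i$, then glue via the monotonicity of the $y$-coordinate along $\alpha_{i,1,h}$, arriving at the identical formula for $S_t$ and the same choice $h_1=\min\{inj(D^2),h_0,h',h'',h'''\}$. The only slip is a harmless swap in your labeling of which side (acute versus obtuse) carries the geodesic cut $\alpha_{i,1,h}$ versus the vertical Fermi cut $\alpha_{i,2,h}$; the paper puts the geodesic on the acute-angle side, but the construction works either way.
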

Let us call $N_{h}(\gamma)$ having a mean convex foliation with fixed point $\{ S_{t} \}_{t \in [0,h]}$ as a \emph{mean convex neighborhood of $\gamma$}. Following is a squeezing lemma which is a capillary analog of Lemma 5.4 in \cite{K}.

\begin{lem}
Let $(D^{2},\partial D^{2},g)$ be a Riemannian disk with a $\theta$-bumpy metric and strictly convex boundary, $\gamma$ be a capillary embedded geodesic, $\Omega$ be a corresponding domain with $\gamma = \partial_{rel} \Omega$, and $X$ be a simplicial complex with finite dimension $k$. There exists $\delta_{1} = \delta_{1}((M,g), \gamma)>0$ with the following property:

For $0<\delta<\delta_{1}$, if $\Phi : X \rightarrow \mathcal{D}(D^{2})$ is a continuous map in the smooth topology such that
\begin{equation*}
    \sup \{ F(\partial^{i} \Phi(x), \gamma) : x \in X \} < \delta,
\end{equation*}
then there is a homotopy $H:[0,1] \times X \rightarrow \mathcal{D}(D^{2})$ such that $H(0,x) = \Phi(x)$ and $H(1,x) = \Omega$ so that $\Phi$ is nullhomotopic.
\begin{proof}
We mainly discuss the difference with the proof of Lemma 5.4 in \cite{K}. We consider a capillary embedded geodesic whose ambient negative Gaussian curvature is negative case first. We can assume that the only capillary embedded geodesic in the tubular neighborhood $N_{h}(\gamma)$ is $\gamma$ since the metric is $\theta$-bumpy.

We take $\delta_{1} = h_{1}^{2}/10$ where $h_{1}$ is a constant in Lemma 5.5. Note that $0<\delta_{1}<\delta_{0}$ by the construction of mean convex neighborhood in the proof of Lemma 5.5. We apply Lemma 5.4 for $0< \delta <\delta_{1}$ and obtain a homotopy $H_{1}:[0,1] \times X \rightarrow \mathcal{D}(D^{2})$ such that
\begin{equation*}
    F(\partial^{i}H_{1}(t,x),\gamma) < C(\theta) \sqrt{\delta}
\end{equation*}
and endpoints are fixed as $\pi_{i} \partial H_{1}(1,x) = \pi_{i} \partial \Omega$ for $i=1,2$. The entire support of $\partial^{i}H_{1}(1,x)$ are in $N_{\sqrt{10\delta}}(\gamma)$ by Lemma 3.1 in \cite{K}. 

Now we apply the curve shortening flow with boundary (see Appendix B) to curves $\{ \partial^{i}H_{1}(1,x) \}_{x \in X}$. $N_{h_{0}}(\gamma)$ is a mean convex neighborhood by Lemma 5.5. We define $Y(\cdot,t)|_{\partial^{i}H_{1}(1,x)}$ to be the curve shortening flow with fixed boundary, where $Y(\cdot,0)|_{\partial^{i}H_{1}(1,x)}=\partial^{i}H_{1}(1,x)$ and $Y_{t}(y,t)|_{\partial^{i}H_{1}(1,x)} = \kappa N(y)$ where $\kappa$ is a geodesic curvature and $N$ is a normal vector field on $Y_{t}(\cdot,t)|_{\partial^{i}H_{1}(1,x)}$ for $t>0$. Then by the regularity theorem of curve shortening flow with boundary (Theorem B.1) and mean-convexity of $N_{h_{0}}(\gamma)$, for any small $s>0$ there is a large time $t_{0}$ such that for $t>t_{0}$ and any $x \in X$, the total curvature $\int_{Y(\cdot,t)} |\kappa| < s$ since curves uniformly smoothly converge to $\gamma$ (See also Theorem 2.6 of \cite{H}). Moreover, the curves are graphical over $\gamma$. Now we construct the homotopy $H_{2} : [0,1] \times X \rightarrow \mathcal{D}(D^{2})$ induced by the flow by $H_{2}(t,x)$ satisfying
\begin{equation*}
    \partial^{i} H_{2}(t,x) = Y(\cdot,2t_{0}t)|_{\partial^{i}H_{1}(1,x)},
\end{equation*}
for all $x \in X$ and $t \in [0,1]$. We apply the squeezing homotopy as in \cite{K} and define this homotopy as $H_{3}:[0,1] \times X \rightarrow \mathcal{D}(D^{2})$. By combining three homotopies, we obtain a desired homotopy $H:[0,1] \times X \rightarrow \mathcal{D}(D^{2})$ as
\begin{equation}
    H = H_{1} \cdot H_{2} \cdot H_{3}.
\end{equation}

Now we consider the general cases. We fix some $\beta>0$ and consider the perturbed metric $(S^{2},g_{\beta})$ in (19). By Lemma 5.3 and Lemma 5.5, there exists a mean convex neighborhood $N_{h'}(\gamma_{g_{\beta}})$ where $N_{h'}(\gamma_{g_{\beta}}) \cap \partial M$ is strictly convex. Let us take $\delta_{1} = h'^{2}/20$. We complete the proof by following remaining arguments in the proof of Lemma 5.4 in \cite{K} and taking the pullback map of (31).
\end{proof}
\end{lem}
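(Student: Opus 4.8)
The plan is to construct the nullhomotopy as a concatenation of three homotopies, following the template of the closed-geodesic squeezing lemma (Lemma~5.4 in \cite{K}) but with the curve shortening flow for closed curves replaced by the flow with fixed boundary points from Appendix~B. The first homotopy slides the endpoints of the relative boundaries onto $\partial\gamma$; the second runs the curve shortening flow with Dirichlet boundary to push the relative boundaries toward $\gamma$; the third is a graphical squeezing carrying the resulting nearly-geodesic graphs onto $\gamma$, equivalently carrying the domains onto $\Omega$. I would first treat the case in which the ambient Gaussian curvature is negative along $\gamma$, and deduce the general case from the conformal perturbation of Proposition~5.2 and Lemma~5.3.

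So assume $K_g<0$ on $\gamma$. By Lemma~5.5 fix $h_1>0$ so that for $0<h<h_1$ the neighborhood $N_h(\gamma)$ is mean convex, i.e.\ admits a foliation $\{S_t\}_{t\in[0,h]}$ by curves with fixed endpoints $\partial\gamma$ whose leaves have curvature vector pointing toward $\gamma$; set $\delta_1=h_1^2/10$. Given $\Phi$ with $\sup_x F(\partial^i\Phi(x),\gamma)<\delta<\delta_1$, Lemma~3.1 of \cite{K} confines each $\partial^i\Phi(x)$ to $N_{\sqrt{10\delta}}(\gamma)\subset N_{h_1}(\gamma)$. Applying Lemma~5.4 yields a homotopy $H_1$ with $F(\partial^i H_1(t,x),\gamma)<C(\theta)\sqrt{\delta}$ and $\pi_i\partial H_1(1,x)=\pi_i\partial\Omega$, so each $\partial^i H_1(1,x)$ still lies in $N_{\sqrt{10\delta}}(\gamma)$ but now has its two endpoints exactly at $\partial\gamma$. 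I would then run the curve shortening flow with fixed boundary on the family $\{\partial^i H_1(1,x)\}_{x\in X}$: the barrier principle against the foliation $\{S_t\}$ traps every evolving curve inside $N_h(\gamma)$, embeddedness is preserved by the flow, and Theorem~B.1 together with the strict stability forced by $K_g<0$ (compare Theorem~2.6 of \cite{H}) gives uniform smooth convergence to $\gamma$; after a time $t_0$ independent of $x$ every curve is a graph over $\gamma$ with total curvature below a prescribed $s$. Running the flow up to time $2t_0 t$ defines $H_2$, and the standard graphical squeezing over $\gamma$ defines $H_3$; then $H=H_1\cdot H_2\cdot H_3$ satisfies $H(0,x)=\Phi(x)$ and $H(1,x)=\Omega$, so $\Phi$ is nullhomotopic.

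For the general case I fix $\beta>0$, pass to the conformal metric $g_\beta=e^{2\phi_\beta}g$ of~(19) with $\phi_\beta$ as in Proposition~5.2, and apply Lemma~5.3: the image $\gamma_{g_\beta}=G_\beta(\gamma)$ is again a capillary embedded geodesic with contact angle $\theta$, the boundary $\partial D^2$ remains strictly convex, and $K_{g_\beta}<0$ on $\gamma_{g_\beta}$. The previous paragraph then applies verbatim in $(D^2,g_\beta)$, producing a mean convex neighborhood $N_{h'}(\gamma_{g_\beta})$ and a nullhomotopy there; pulling back by the canonical diffeomorphism $G_\beta$, and setting $\delta_1=h'^{2}/20$ to absorb the bounded distortion of the $F$-metric under $G_\beta$, gives the homotopy for $(D^2,g)$.

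The main obstacle is the middle homotopy: one must know that the curve shortening flow with a Dirichlet boundary condition, started from an embedded curve $F$-close to $\gamma$, stays embedded, remains confined to $N_h(\gamma)$, and converges smoothly to $\gamma$, all uniformly in the parameter $x$ over the compact complex $X$. Both the confinement and the convergence rest on the mean convex foliation of Lemma~5.5, whose delicate point is the construction near the two corners where $\gamma$ meets $\partial D^2$ at the fixed angle $\theta$ (handled there by star-shaped scaling foliations of the small convex corner regions); the conformal perturbation is exactly the device that supplies, near an otherwise possibly non-strictly-stable geodesic, the strict mean convexity that this trapping argument needs.
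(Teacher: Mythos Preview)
Your proposal is correct and follows essentially the same approach as the paper: the same three-stage decomposition $H=H_1\cdot H_2\cdot H_3$ (endpoint sliding via Lemma~5.4, curve shortening flow with fixed boundary via Theorem~B.1 inside the mean convex neighborhood of Lemma~5.5, then graphical squeezing as in \cite{K}), the same reduction of the general case to the negatively curved case via the conformal perturbation of Proposition~5.2 and Lemma~5.3, and even the same choices $\delta_1=h_1^2/10$ and $\delta_1=h'^2/20$. Your additional commentary on the barrier principle and uniform convergence fills in details the paper leaves to the references.
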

\subsection{$F$-distance and $L^{\theta}$-functional estimate} In this section, we prove the $F$-distance and $L^{\theta}$-functional estimate along the homotopy we constructed in Lemma 5.6. This is an analog of Section 6 of \cite{K}. We will focus on the modification we need in the capillary case.

Let $\gamma$ be a capillary embedded geodesic with a contact angle $\theta \in (0, \pi/2)$ in $(D^{2},\partial D^{2}, g)$ and Gaussian curvature on $\gamma$ is negative. Take $h_{1}>0$ in Lemma 5.5. Suppose $0<h<h_{1}$ in the following Lemma 5.7 and Lemma 5.8, then $N_{h}(\gamma)$ is mean convex neighborhood by Lemma 5.5. We take the Fermi coordinate $c: [-\delta,|\gamma|+\delta] \times (-h,h) \rightarrow M$ of (19). The following lemma is a capillary analog of Lemma 6.1 in \cite{K}. The proof of the following lemma directly follows from the proof of Lemma 6.1 in \cite{K}.
\begin{lem}
    Let $(D^{2},\partial D^{2},g)$, $\gamma$, $c$ be as above and $K(z)<0$ for $z \in N_{h}(\gamma)$. There exists $C=C(|\gamma|)>0$ satisfying the following property: For $0<\epsilon<h^{2}$, if an embedded curve $\alpha$ has its endpoints on different components of $\partial D^{2} \cap N_{h}(\gamma)$ satisfies $|\alpha|<|\gamma|+\epsilon$, then
\begin{equation}
F(\alpha,\gamma)< C(|\gamma|)(h+ \sqrt{\epsilon}).    
\end{equation}
\end{lem}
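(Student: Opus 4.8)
The plan is to reduce the $F$-distance estimate to two elementary facts about embedded curves close to $\gamma$ in the mean-convex (rounded trapezoidal) tubular neighborhood $N_h(\gamma)$: first, that the hypothesis $|\alpha| < |\gamma| + \epsilon$ together with negativity of the Gaussian curvature forces $\alpha$ to stay in the thin strip $N_{C\sqrt{\epsilon}}(\gamma)$, and second, that any embedded curve confined to a thin strip around $\gamma$ and with endpoints on the two boundary arcs $\partial D^2 \cap N_h(\gamma)$ is automatically $F$-close to $\gamma$ by a crude integral-geometric comparison. Since the statement asserts the proof follows that of Lemma 6.1 in \cite{K}, the main content is to check that the boundary arcs and the capillary angle cause no new difficulty.

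First I would set up the Fermi coordinates $c:[-\delta,|\gamma|+\delta]\times(-h,h)\to M$ from (19), so that $\gamma = c(\{y=0\})$ and the metric is $J^2\,dx^2 + dy^2$ with $J(x,0)=1$, $J_y(x,0)=0$, $J_{yy}=-KJ$; since $K<0$ on $N_h(\gamma)$ we have $J_{yy}>0$, hence $J(x,y)\ge 1$ on the strip. This convexity in $y$ is exactly what makes the length of any curve in the strip bounded below by the length of its projection to $\gamma$ plus a controlled penalty for its $y$-excursion: if $\alpha$ has height $\sup|y|=\rho$ along it, a standard comparison (push $\alpha$ down to $y=0$, using that the $x$-length integrand $J\ge 1$ and picking up a definite gain from the portions where $|y|$ is comparable to $\rho$, exactly as in Lemma 6.1 of \cite{K}) gives $|\alpha| \ge |\gamma| + c_0\rho^2/|\gamma|$ for a universal $c_0>0$. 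Combined with $|\alpha|<|\gamma|+\epsilon$ this yields $\rho < C_1\sqrt{\epsilon}$, so $\alpha \subset N_{C_1\sqrt{\epsilon}}(\gamma)$ in the Fermi coordinate neighborhood (here the hypothesis $\epsilon<h^2$ keeps everything inside the chart). I should note that the endpoints of $\alpha$ lie on $\partial D^2 \cap N_h(\gamma)$, which in Fermi coordinates are two arcs meeting $\{y=0\}$ transversally at angle $\theta$, so the endpoints themselves have $|y|$ at most $O(h)$ — this is why the final bound is $C(h+\sqrt{\epsilon})$ rather than $C\sqrt{\epsilon}$, and it is the only place the capillary angle enters (through a $1/\sin\theta$ factor absorbed into $C$).

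Second I would convert the containment $\alpha\subset N_{C_1\sqrt\epsilon+C_2 h}(\gamma)$ into the $F$-distance bound. For any $f:G_1(D^2)\to\mathbb{R}$ with $|f|\le 1$ and $\mathrm{Lip}(f)\le 1$, write $\int f\,d\alpha - \int f\,d\gamma$ and estimate it by: (i) the difference in the total lengths, which is at most $\epsilon < h^2$; and (ii) the oscillation of $f$ caused by $\alpha$ sitting within Hausdorff distance $C_1\sqrt\epsilon + C_2 h$ of $\gamma$ and having nearby tangent directions — the tangent-line control comes from the height bound plus the length bound (a curve graphical over $\gamma$ with small height and near-minimal length has tangent close to that of $\gamma$ except on a small set, again as in \cite{K}). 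Summing these contributions gives $|\int f\,d\alpha - \int f\,d\gamma| < C(|\gamma|)(h+\sqrt\epsilon)$ uniformly over admissible $f$, hence $F(\alpha,\gamma)<C(|\gamma|)(h+\sqrt\epsilon)$, which is (33).

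The main obstacle is not any single estimate — each piece is routine — but rather verifying cleanly that the endpoint arcs $\partial D^2\cap N_h(\gamma)$ behave well in Fermi coordinates: one must confirm that for $h$ small these really are two disjoint graphs $\{x = \text{const} + O(h)\}$-type arcs transversal to $\{y=0\}$, so that "endpoints on different components" is meaningful and the length-comparison argument (which wants to project $\alpha$ to $y=0$ while keeping endpoints on $\partial D^2$) can be carried out without the projected curve leaving $D^2$. This is exactly the geometric input already established in the construction of the mean convex neighborhood in Lemma 5.5 and in (24), so I would cite those and otherwise defer to the proof of Lemma 6.1 in \cite{K} verbatim for the analytic core.
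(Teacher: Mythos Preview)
Your proposal is correct and matches the paper's approach: the paper gives no independent proof but simply asserts that the argument follows directly from Lemma~6.1 in \cite{K}, and your reconstruction---Fermi coordinates with $J\ge 1$ from $K<0$, length comparison to bound the $y$-height by $C\sqrt{\epsilon}$, then the crude $F$-distance estimate---is exactly that argument with the boundary arcs handled by the $O(h)$ endpoint discrepancy, as you note. Your identification of the only new ingredient (the two transversal arcs $\partial D^{2}\cap N_{h}(\gamma)$, accounting for the extra $h$ term) is correct and is indeed covered by the constructions in Lemma~5.5.
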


Then we obtain the $F$-distance estimate along the squeezing map in Lemma 5.6 in negative ambient Gaussian curvature case. This is an analog of Lemma 6.2 in \cite{K}. The sliding procedure in Lemma 5.4 causes the change of order on the $F$-distance bound.
\begin{lem} Let $\gamma= \partial \Omega$ be a capillary embedded geodesic with contact angle $\theta$ on $(D^{2},\partial D^{2},g)$, Gaussian curvature $K(z)<0$ for $z \in N_{h}(\gamma)$, and $X$ be a $k$-dimensional simplicial complex. There exists $\epsilon_{0} = \epsilon_{0}(h,\theta)$ such that  there exists $C = C(|\gamma|,\theta)>0$ satisfying the following property: For $0<\epsilon<\epsilon_{0}$, if $\Phi:X \rightarrow \mathcal{D}(D^{2})$ is a continuous map in the smooth topology such that
\begin{equation*}
    \sup \{ F(\partial^{i} \Phi(x), \gamma) : x \in X \} < \epsilon,
\end{equation*}
then there is a homotopy $H:[0,1] \times X \rightarrow \mathcal{D}(D^{2})$ such that $H(0,x) = \Phi(x)$, $H(1,x) = \Omega$ and the following $F$-distance estimate holds along $H$:
\begin{equation*}
    \sup \{ F(\partial^{i}H(t,x),\gamma) : x \in X \text{ and } t \in [0,1] \} < C(|\gamma|,\theta) \sqrt[4]{\epsilon}.
\end{equation*}
\begin{proof}
    Take $\epsilon_{0} = \min \{1, C(\theta)^{-2}h^{2}/10 \} h^{2}/10 $ where $C(\theta)$ is the constant in (22). We consider the homotopy $ H = H_{1} \cdot H_{2} \cdot H_{3}$ in (31). We apply Lemma 5.4 and obtain the following $F$-distance estimate:
    \begin{equation}
        \sup \{ F(\partial^{i}H_{1}(t,x),\gamma) : x \in X \text{ and } t \in [0,1] \} < C(\theta) \sqrt{\epsilon}.
    \end{equation}
    Note that $C(\theta) \sqrt{\epsilon}< h^{2}/10$ and $supp(\partial^{i}H_{1}(t,x)) \subset N_{\sqrt{10\epsilon}}(\gamma)$ for $t \in [0,1]$ and $x \in X$. Since $0<\sqrt{10\epsilon}<h$, $N_{\sqrt{10\epsilon}}(\gamma)$ is a mean convex neighborhood of $\gamma$ by Lemma 5.5.

    Since the length functional monotonically decreases by the curve shortening flow with fixed boundary, the arguments in Lemma 6.2 in \cite{K} can be directly applied to the homotopy $H_{2} \cdot H_{3}$ by applying Lemma 5.7 in place of Lemma 6.1 in \cite{K}. We have following for $t \in [0,1]$ and $x \in X$:
    \begin{align}
        F(\partial^{i}H(t,x),\gamma) &< C(|\gamma|) (\sqrt{10\epsilon}+(\sup \{ F(\partial^{i}H_{1}(1,x),\gamma) : x \in X\})^{\frac{1}{2}}) \\
        &< C(|\gamma|)(\sqrt{10\epsilon}+ C(\theta)^{\frac{1}{2}}\sqrt[4]{\epsilon}) \\
        &< 2C(|\gamma|)C(\theta)^{\frac{1}{2}} \sqrt[4]{\epsilon}.
    \end{align}
    (34) follows from (32) in Lemma 5.7 and (35) is obtained by (33). By taking $C(|\gamma|,\theta) = 2C(|\gamma|)C(\theta)^{\frac{1}{2}}$ in (36), we obtain a desired estimate.
\end{proof}
\end{lem}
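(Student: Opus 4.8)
The plan is to add quantitative control to the homotopy already produced in Lemma 5.6, not to build a new one. Recall that there $H=H_{1}\cdot H_{2}\cdot H_{3}$ contracts $\Phi$ onto the constant map $\Omega$, where $H_{1}$ is the endpoint-sliding homotopy of Lemma 5.4, $H_{2}$ is the curve shortening flow with fixed boundary of Appendix B, and $H_{3}$ is the graphical squeeze onto $\Omega$. The task is to attach to this $H$ an $F$-distance bound of order $\sqrt[4]{\epsilon}$ by estimating separately along the three pieces, having first chosen $\epsilon_{0}$ so small that the whole deformation stays inside a mean convex tube around $\gamma$. Concretely I would take $\epsilon_{0}=\epsilon_{0}(h,\theta)$ small enough that $C(\theta)\sqrt{\epsilon_{0}}<h^{2}/10$, with $C(\theta)$ the constant of Lemma 5.4, and also small enough that $\sqrt{10\epsilon}\le 2\sqrt{C(\theta)}\,\epsilon^{1/4}\le h$ for all $\epsilon<\epsilon_{0}$; since $h<h_{1}$, Lemma 5.5 then makes $N_{r}(\gamma)$ a mean convex neighborhood of $\gamma$ for every radius $r$ occurring below.

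First I would estimate along $H_{1}$. Applying Lemma 5.4 with $\delta=\epsilon$ yields $\sup\{F(\partial^{i}H_{1}(t,x),\gamma):x\in X,\ t\in[0,1]\}<C(\theta)\sqrt{\epsilon}$ together with $\pi_{i}\partial H_{1}(1,x)=\pi_{i}\partial\Omega$. From the construction in Lemma 5.4 one also reads off that every $\partial^{i}H_{1}(t,x)$ is supported in $N_{\sqrt{10\epsilon}}(\gamma)$ (the initial curve is, by Lemma 3.1 of \cite{K}, and the short segments glued on run along $\partial D^{2}$ near $\partial\gamma$, hence stay in the trapezoidal tube), while the $F$-metric dominating the mass difference gives $|\partial^{i}H_{1}(1,x)|\le|\gamma|+C(\theta)\sqrt{\epsilon}$. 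So along $H_{1}$ the bound $C(\theta)\sqrt{\epsilon}$ already holds, which is far better than $\sqrt[4]{\epsilon}$.

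Next I would estimate along $H_{2}\cdot H_{3}$. Running the boundary-fixed curve shortening flow from $\partial^{i}H_{1}(1,x)$ inside the mean convex tube $N_{\sqrt{10\epsilon}}(\gamma)$, the curves stay embedded, stay in that tube (barrier comparison against the foliation leaves of Lemma 5.5, which share the fixed endpoints $\partial\gamma$), have nonincreasing length, and converge smoothly and graphically to $\gamma$, which is exactly what lets $H_{3}$ finish the contraction, precisely as in Lemma 5.6. Every curve along $H_{2}\cdot H_{3}$ is then embedded, supported in $N_{\sqrt{10\epsilon}}(\gamma)$, has its endpoints $\partial\gamma$ on the two distinct components of $\partial D^{2}\cap N_{h}(\gamma)$, and has length at most $|\gamma|+C(\theta)\sqrt{\epsilon}$. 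I would now feed these into Lemma 5.7, but with the tube width taken to be $2\sqrt{C(\theta)}\,\epsilon^{1/4}$ rather than the geometrically natural $\sqrt{10\epsilon}$: this width still contains the support, and it satisfies the admissibility hypothesis $C(\theta)\sqrt{\epsilon}<(2\sqrt{C(\theta)}\,\epsilon^{1/4})^{2}$, which $\sqrt{10\epsilon}$ fails for small $\epsilon$. Lemma 5.7 then gives, for all $x$ and $t$, $F(\partial^{i}H(t,x),\gamma)<C(|\gamma|)\bigl(2\sqrt{C(\theta)}\,\epsilon^{1/4}+(C(\theta)\sqrt{\epsilon})^{1/2}\bigr)\le C(|\gamma|,\theta)\,\sqrt[4]{\epsilon}$ along $H_{2}\cdot H_{3}$. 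Combining with the $H_{1}$ estimate (enlarging $C(|\gamma|,\theta)$ if needed) gives the claimed bound over all of $H$.

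I expect this scale match to be the only genuinely delicate point. In the closed-geodesic analog, Lemma 6.2 of \cite{K}, there is no endpoint-sliding step: the relevant thin tube has width $\sqrt{\epsilon}$, the length excess is only $O(\epsilon)$, and the closed-curve version of Lemma 5.7 returns $F=O(\sqrt{\epsilon})$. Here the endpoints must first be dragged onto $\partial\gamma$ along $\partial D^{2}$, which costs $O(\sqrt{\epsilon})$ in length, and one cannot plug an $O(\sqrt{\epsilon})$ length excess into Lemma 5.7 at tube width $O(\sqrt{\epsilon})$; raising the width to $O(\epsilon^{1/4})$ is the cheapest remedy, and this is exactly what degrades the exponent from $1/2$ to $1/4$, as flagged in the sentence just before the statement. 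All remaining ingredients --- mean convexity of the tube from $K<0$ via Lemma 5.5, long-time smooth convergence of the boundary-fixed flow inside a mean convex tube, and the final graphical squeeze --- are imported unchanged from Lemmas 5.4--5.7 and Appendix B.
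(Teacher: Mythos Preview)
Your proposal is correct and follows the paper's approach almost exactly: the same three-piece homotopy $H=H_{1}\cdot H_{2}\cdot H_{3}$ from Lemma~5.6, the same Lemma~5.4 bound $F(\partial^{i}H_{1}(t,x),\gamma)<C(\theta)\sqrt{\epsilon}$ along $H_{1}$, and the same appeal to Lemma~5.7 (together with length-monotonicity of the boundary-fixed curve shortening flow and the mean convex foliation of Lemma~5.5) along $H_{2}\cdot H_{3}$.

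The only substantive difference is your handling of the scale match when invoking Lemma~5.7. The paper feeds in tube width $\sqrt{10\epsilon}$ and length excess $C(\theta)\sqrt{\epsilon}$ directly, writing $F<C(|\gamma|)\bigl(\sqrt{10\epsilon}+(C(\theta)\sqrt{\epsilon})^{1/2}\bigr)$ and deferring the justification to ``the arguments in Lemma~6.2 in \cite{K}''; strictly speaking, the stated hypothesis $\epsilon<h^{2}$ of Lemma~5.7 fails in this regime (it would require $C(\theta)\sqrt{\epsilon}<10\epsilon$). You instead widen the tube to $2\sqrt{C(\theta)}\,\epsilon^{1/4}$, which still contains the support, makes the admissibility hypothesis honest, and yields the same $C(|\gamma|,\theta)\sqrt[4]{\epsilon}$ bound. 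Since the $\sqrt{10\epsilon}$ term is dominated by the $\epsilon^{1/4}$ term anyway, the final arithmetic is identical; your version simply makes this bookkeeping step explicit rather than outsourcing it to \cite{K}.
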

Then with Lemma 5.3 and Lemma 5.6, we can apply the arguments in Lemma 6.3 in \cite{K}, and we obtain the following $F$-distance estimate in general cases.
\begin{lem} Let $\gamma=\partial \Omega$ be a capillary embedded geodesic on $(D^{2},\partial D^{2},g)$, and $X$ be a $k$-dimensional simplicial complex. There exists $C = C(|\gamma|)>0$ and $\epsilon_{0}= \epsilon_{0}(\gamma)>0$ such that satisfying the following property: For $0<\epsilon<\epsilon_{0}$, if $\Phi : X \rightarrow \mathcal{D}(D^{2})$ is a continuous map in the smooth topology such that
\begin{equation*}
    \sup \{ F(\partial^{i}\Phi(x), \gamma) : x \in X \} < \epsilon,
\end{equation*}
then there is a homotopy $H:[0,1] \times X \rightarrow \mathcal{D}(D^{2})$ such that $H(0,x) = \Phi(x)$, $H(1,x) = \Omega$ and the following $F$-distance estimate holds along $H$:
\begin{equation*}
    \sup \{ F(\partial^{i}H(t,x),\gamma) : x \in X \text{ and } t \in [0,1] \} < C(|\gamma|,\theta) \sqrt[4]{\epsilon}.
\end{equation*}
\end{lem}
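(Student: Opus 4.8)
The plan is to reduce the general case to the negative-curvature case handled in Lemma 5.8 by the same conformal perturbation device used in Lemma 5.3 and Lemma 5.6. First I would fix a small $\beta>0$ and pass to the conformally perturbed metric $g_\beta = \exp(2\phi_\beta)g$ of \eqref{19}, together with the canonical diffeomorphism $G_\beta:(D^2,g)\to(D^2,g_\beta)$. By Lemma 5.3, the image curve $\gamma_{g_\beta}=G_\beta(\gamma)$ is again a capillary embedded geodesic with contact angle $\theta$, the Gaussian curvature $K_{g_\beta}$ is \emph{negative} on a tubular neighborhood $N_{h'}(\gamma_{g_\beta})$, and the boundary $\partial D^2$ stays strictly convex in $(D^2,g_\beta)$. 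Thus $(D^2,\partial D^2,g_\beta)$ and $\gamma_{g_\beta}$ satisfy exactly the hypotheses of Lemma 5.8.

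Next I would transport the given family $\Phi:X\to\mathcal D(D^2)$ to the perturbed metric: set $\Phi_\beta(x) := G_\beta(\Phi(x))$. The point is that $G_\beta$ is the identity map on the underlying set, it is the identity outside $N_h(\gamma)$, and since $\|\exp(2\phi_\beta)-1\|_{C^0}<\beta$ the map $G_\beta$ distorts lengths, and hence the $F$-distance of the associated varifolds, by a factor controlled by $1+C\beta$. So for $\beta$ chosen sufficiently small (depending only on $\gamma$ and the constants appearing in Lemma 5.8), the hypothesis $\sup_x F(\partial^i\Phi(x),\gamma)<\epsilon$ implies $\sup_x F_{g_\beta}(\partial^i\Phi_\beta(x),\gamma_{g_\beta})<2\epsilon$, say, which is still below the threshold $\epsilon_0$ of Lemma 5.8 once we shrink $\epsilon_0$ here. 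Applying Lemma 5.8 in $(D^2,g_\beta)$ produces a homotopy $H_\beta:[0,1]\times X\to\mathcal D(D^2)$ with $H_\beta(0,x)=\Phi_\beta(x)$, $H_\beta(1,x)=G_\beta(\Omega)$, and
\begin{equation*}
\sup\{ F_{g_\beta}(\partial^i H_\beta(t,x),\gamma_{g_\beta}) : x\in X,\ t\in[0,1]\} < C(|\gamma|,\theta)\sqrt[4]{2\epsilon}.
\end{equation*}
I would then pull back by $G_\beta^{-1}$, defining $H(t,x) := G_\beta^{-1}(H_\beta(t,x))$. This $H$ is a homotopy in $\mathcal D(D^2)$ (with respect to $g$) from $\Phi$ to $\Omega$, and converting the $F$-distance estimate back through the bi-Lipschitz bound on $G_\beta$ absorbs the extra constant into $C(|\gamma|,\theta)$, yielding the claimed estimate $\sup\{F(\partial^i H(t,x),\gamma):x\in X,\ t\in[0,1]\}<C(|\gamma|,\theta)\sqrt[4]{\epsilon}$ after a final adjustment of $\epsilon_0(\gamma)$ and the constant.

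The only delicate point is bookkeeping the dependence of constants: one must choose $\beta$ first (small enough that the $F$-distance distortion of $G_\beta$ is, say, within a factor $2$, and small enough that Lemma 5.3 and Lemma 5.5 apply to give a mean convex neighborhood $N_{h'}(\gamma_{g_\beta})$ with convex boundary trace), then choose $\epsilon_0(\gamma)$ below $\epsilon_0(h',\theta)/2$ from Lemma 5.8, and only then is the resulting constant $C(|\gamma|,\theta)$ independent of everything else. Since $\beta$ depends only on $\gamma$ and the fixed geometry, and the perturbation is supported in a fixed neighborhood of $\gamma$, none of this circularity arises; the argument is the verbatim boundary analog of Lemma 6.3 in \cite{K}, with Lemma 5.8 in place of Lemma 6.2 of \cite{K} and Lemma 5.3 in place of Lemma 5.3 of \cite{K}. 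I expect no genuinely new obstacle here beyond this constant-tracking, which is routine.
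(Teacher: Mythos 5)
Your proposal is correct and takes essentially the same route as the paper: the paper's own (one-line) proof likewise reduces the general case to the negative-curvature case by the conformal perturbation $g_{\beta}$ of Lemma 5.3 and the pullback construction behind Lemma 5.6, invoking the arguments of Lemma 6.3 in \cite{K}, which is exactly your scheme of applying Lemma 5.8 in $(D^{2},g_{\beta})$ and pulling back by $G_{\beta}^{-1}$ while absorbing the bounded distortion of $G_{\beta}$ into $C(|\gamma|,\theta)$ and shrinking $\epsilon_{0}(\gamma)$.
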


The following interpolation lemma associated with a local min-max theorem (Theorem 5.1) is an analog of Lemma 6.4 in \cite{K}.

\begin{lem} Let $\gamma=\partial \Omega$ be a capillary embedded geodesic with contact angle $\theta$ and $X$ be a simplicial complex with finite dimension $k$. Suppose $\Phi: X \rightarrow \mathcal{D}(D^{2})$ be a $k$-parameter family of domains and $\{ F_{v}\}_{v \in \overline{B}^{j}}$ be a local min-max family of diffeomorphisms in Theorem 5.1. There exists $\epsilon_{1}=\epsilon_{1}(\gamma)>0$ such that satisfying the following property : If there is a continuous function $w: X \rightarrow \partial B^{j}$ satisfying
\begin{equation*}
F(\partial^{i}\Phi(x),(F_{w(x)})_{\sharp}\gamma)<\epsilon
\end{equation*}
with $0<\epsilon<\epsilon_{1}$, then there exists a homotopy $H: [0,1] \times X \rightarrow \mathcal{D}(D^{2})$ such that $H(0,x) = \Phi(x)$, $H(1,x) = (F_{w(x)})_{\sharp}\Omega$ satisfying
\begin{equation}
    \sup \{ F(\partial^{i}H(t,x),(F_{w(x)})_{\sharp}\gamma) : x \in X \text{ and } t \in [0,1] \} < C(|\gamma|,\theta) \sqrt[4]{\epsilon}.
\end{equation}
\end{lem}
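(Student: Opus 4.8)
The plan is to reduce the parametrized interpolation with the moving target $(F_{w(x)})_\sharp\gamma$ to the fixed-target case already handled in Lemma 5.9, exactly as Lemma 6.4 of \cite{K} reduces to Lemma 6.3 there. The key observation is that $\{F_v\}_{v\in\overline B^j}$ is a \emph{smooth} family of diffeomorphisms with $F_0=\mathrm{Id}$ and $\|F_v-\mathrm{Id}\|_{C^1}<\beta$ (Theorem 5.1(iii)), so each $F_v$ distorts the $F$-distance and the $L^\theta$-functional by a factor controlled uniformly by $\beta$; in particular the hypothesis $F(\partial^i\Phi(x),(F_{w(x)})_\sharp\gamma)<\epsilon$ transfers, after applying $(F_{w(x)})^{-1}=F_{-w(x)}$, to $F(\partial^i((F_{-w(x)})_\sharp\Phi(x)),\gamma)<C(\beta)\epsilon$, which is still of order $\epsilon$.

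First I would set $\tilde\Phi(x) := (F_{-w(x)})_\sharp\Phi(x)$, a continuous $k$-parameter family in $\mathcal D(D^2)$ (continuity holds because $w$ and $v\mapsto F_v$ are continuous and push-forward is continuous in the smooth topology). Shrinking $\epsilon_1$ so that $C(\beta)\epsilon<\epsilon_0$, where $\epsilon_0=\epsilon_0(\gamma)$ is the constant from Lemma 5.9, I apply Lemma 5.9 to $\tilde\Phi$ to obtain a homotopy $\tilde H:[0,1]\times X\to\mathcal D(D^2)$ with $\tilde H(0,x)=\tilde\Phi(x)$, $\tilde H(1,x)=\Omega$, and
\begin{equation*}
\sup\{F(\partial^i\tilde H(t,x),\gamma):x\in X,\ t\in[0,1]\}<C(|\gamma|,\theta)\sqrt[4]{C(\beta)\epsilon}.
\end{equation*}
Then I push this homotopy forward by the (fixed in $t$, varying in $x$) diffeomorphism $F_{w(x)}$, defining $H(t,x):=(F_{w(x)})_\sharp\tilde H(t,x)$. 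By construction $H(0,x)=(F_{w(x)})_\sharp\tilde\Phi(x)=\Phi(x)$ and $H(1,x)=(F_{w(x)})_\sharp\Omega$, as required, and $H$ is continuous since it is a composition of continuous maps.

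It remains to propagate the $F$-distance estimate through the final push-forward. Since $\|F_v-\mathrm{Id}\|_{C^1}<\beta$ for all $v\in\overline B^j$, there is a constant $C(\beta)\to 1$ as $\beta\to 0$ with
\begin{equation*}
F((F_{w(x)})_\sharp V,(F_{w(x)})_\sharp W)\le C(\beta)\,F(V,W)
\end{equation*}
for any $V,W\in V_1(D^2)$; applying this with $V=\partial^i\tilde H(t,x)$ and $W=\gamma$ gives
\begin{equation*}
F(\partial^i H(t,x),(F_{w(x)})_\sharp\gamma)\le C(\beta)\,C(|\gamma|,\theta)\sqrt[4]{C(\beta)\epsilon}<C(|\gamma|,\theta)\sqrt[4]{\epsilon},
\end{equation*}
after relabeling the constant and choosing $\beta$ small (or simply absorbing the harmless factors into $C(|\gamma|,\theta)$), which is exactly (38). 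The main technical point to be careful about — and the step I expect to require the most attention — is the uniform $C^1$-control on $\{F_v\}$ producing the bi-Lipschitz distortion bound for the $F$-metric and for $L^\theta$ simultaneously over the whole compact parameter ball $\overline B^j$: this is where one must invoke Theorem 5.1(iii) and the fact that $L^\theta$ involves both an interior length term and a boundary length term weighted by $\cos\theta$, so that the push-forward of a domain in $\mathcal D(D^2)$ by a diffeomorphism tangent to $\partial D^2$ (which $F_v$ may be arranged to be, or whose failure to be so is itself $O(\beta)$) stays in $\mathcal D(D^2)$ with controlled functional. Everything else is a formal consequence of Lemma 5.9 together with naturality of push-forward.
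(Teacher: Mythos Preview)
Your proposal is correct and follows essentially the same approach the paper intends: the paper does not give an explicit proof of Lemma 5.10 but simply states it is ``an analog of Lemma 6.4 in \cite{K}'', and your pull-back/push-forward reduction to Lemma 5.9 is exactly that analog. Note that the distortion bound you need is precisely what the paper records later (equation (43) and (iii) of Theorem 5.1), and since $\{F_v\}\subset \mathrm{Diff}(D^2)$ the diffeomorphisms automatically preserve $\partial D^2$, so your concern about tangency to the boundary is not an issue.
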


Finally we obtain the following $L^{\theta}$-functional estimate along the squeezing homotopy.
\begin{cor}
Suppose $\gamma$, $\Omega$, $\Phi$, $\{ F_{v}\}_{v \in \overline{B}^{j}}$ are as in Lemma 5.10, and $\epsilon_{1}=\epsilon_{1}(\gamma)$ is obtained by Lemma 5.10. Then for $0<\epsilon<\epsilon_{1}$, the following holds: If there is a continuous function $w: X \rightarrow \partial B^{j}$ satisfying
\begin{equation*}
F(\partial^{i}\Phi(x),(F_{w(x)})_{\sharp}\gamma)<\epsilon
\end{equation*}
with $0<\epsilon<\epsilon_{1}$, then there exists a homotopy $H: [0,1] \times X \rightarrow \mathcal{D}(D^{2})$ such that $H(0,x) = \Phi(x)$, $H(1,x) = (F_{w(x)})_{\sharp}\Omega$ satisfying
\begin{equation}
    L^{\theta}(H(t,x)) < L^{\theta}((F_{w(x)})_{\sharp}\Omega)+ C(|\gamma|,\theta) \sqrt[4]{\epsilon}
\end{equation}
for $x \in X$ and $t \in [0,1]$.
\begin{proof}
    Let us recall the definition (1) of $L^{\theta}$-functional. Note that the homotopy satisfying (37) in Lemma 5.10 is constructed by the pullback map of (31) in Lemma 5.6. Moreover, by tracking the homotopies consisting (31) of, we have the pullback map of $H_{1}$ is the only homotopy involves the boundary part of $L^{\theta}$-functional.

    By (iii) of Theorem 5.1, let us take local min-max ball to satisfy
    \begin{equation}
        |(F_{v})_{\sharp}(\alpha)| < 2|\alpha|
    \end{equation}
    for $v \in \overline B^{j}$ and $\alpha \in V_{1}(M)$. Moreover, by (43) and the choice of $\beta$ in the proof of Lemma 6.3 in \cite{K}, we take $G_{\beta}$ such that
    \begin{equation}
        |(G_{\beta})^{-1}_{\sharp}(\alpha)| < 3|\alpha|
    \end{equation}
    for $\alpha \in V_{1}(M)$. Then we obtain
    \begin{align}
        L^{\theta}(H(t,x)) &= |\partial^{i}H(t,x)| + \cos \theta |\partial^{b}H(t,x)| \nonumber \\
        &< |(F_{w(x)})_{\sharp}\gamma| + \cos \theta |\partial^{b}H(t,x)| + C(|\gamma|,\theta) \sqrt[4]{\epsilon} \\
        &\le |(F_{w(x)})_{\sharp}\gamma| + \cos \theta (|\partial^{b} (F_{w(x)})_{\sharp}\Omega|+|\partial^{b} (F_{w(x)})_{\sharp}\Omega \triangle \partial^{b}H(t,x)|) + C(|\gamma|,\theta) \sqrt[4]{\epsilon} \nonumber \\ 
        &\le |(F_{w(x)})_{\sharp}\gamma| + \cos \theta |\partial^{b} (F_{w(x)})_{\sharp}\Omega| + 6 \cos \theta \cdot \frac{4 \sqrt{10 \epsilon}}{\sin \theta} + C(|\gamma|,\theta) \sqrt[4]{\epsilon}  \\ \nonumber
         &< L^{\theta}((F_{w(x)})_{\sharp}\Omega)+ C(|\gamma|,\theta) \sqrt[4]{\epsilon}.
    \end{align}
We obtain (41) by applying (37), and (42) by applying (26), (39), and (40).
\end{proof}
\end{cor}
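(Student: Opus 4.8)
The plan is to decompose the homotopy $H$ built in Lemma~5.10 into its three constituent pieces, as indicated in the statement and its proof sketch: $H$ is the pullback under $G_\beta$ of $H_1\cdot H_2\cdot H_3$ from Lemma~5.6, where $H_1$ is the sliding homotopy of Lemma~5.4 that identifies endpoints, $H_2$ is the curve shortening flow with fixed boundary, and $H_3$ is the final scaling/squeezing onto $\gamma$ (all conjugated by the local min-max diffeomorphisms $F_{w(x)}$). The key observation, already noted in the proof, is that among these three only (the pullback of) $H_1$ moves the boundary varifold $\partial^b$; the curve shortening flow keeps endpoints fixed and the squeezing $H_3$ likewise fixes the trace on $\partial D^2$. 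So the interior term is controlled exactly by the $F$-distance estimate of Lemma~5.10, giving $|\partial^i H(t,x)| < |(F_{w(x)})_\sharp\gamma| + C(|\gamma|,\theta)\sqrt[4]{\epsilon}$ directly from \eqref{}(37) together with the definition of the $F$-metric and the length comparison \eqref{}(39)--\eqref{}(40) for $F_v$ and $G_\beta$.

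For the boundary term the plan is to bound $|\partial^b(F_{w(x)})_\sharp\Omega \triangle \partial^b H(t,x)|$ by the total length of the two small boundary arcs $\gamma_{1,t},\gamma_{2,t}$ introduced in Lemma~5.4. Those arcs are supported in $N_{\sqrt{10\epsilon}}(\gamma)\cap\partial D^2$, whose two components each have length at most $2\sqrt{10\epsilon}/\sin\theta$ by the estimate \eqref{}(23) on the trapezoidal neighborhood, so the symmetric difference has length at most $4\sqrt{10\epsilon}/\sin\theta$; the factor $6$ in \eqref{}(42) comes from the length-distortion bound \eqref{}(40) for $G_\beta^{-1}$ (with \eqref{}(39) for $F_v$ absorbed). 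Then $\cos\theta\cdot 6\cdot\frac{4\sqrt{10\epsilon}}{\sin\theta}$ is of order $\sqrt{\epsilon}$, hence dominated by $C(|\gamma|,\theta)\sqrt[4]{\epsilon}$ after enlarging the constant, and adding this to the interior estimate and recognizing $|(F_{w(x)})_\sharp\gamma| + \cos\theta|\partial^b(F_{w(x)})_\sharp\Omega| = L^\theta((F_{w(x)})_\sharp\Omega)$ yields \eqref{}(38).

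The only genuinely delicate point is bookkeeping the conjugation by $F_{w(x)}$ and $G_\beta$: one must check that pushing forward by these diffeomorphisms does not introduce new boundary intersections and that $L^\theta$ is comparable before and after, which is why the length bounds \eqref{}(39) and \eqref{}(40) are invoked with explicit constants $2$ and $3$; since $F_v$ is $C^1$-close to the identity and tangent to $\partial D^2$, and $G_\beta$ is a fixed conformal diffeomorphism fixing $\gamma$ and preserving boundary convexity by Lemma~5.3, both the interior and boundary pieces transform with controlled distortion. I expect this to be the main obstacle only in the sense of care rather than difficulty: once the decomposition $H=H_1\cdot H_2\cdot H_3$ is in hand and one notes that $H_2,H_3$ leave $\partial^b$ untouched, the estimate \eqref{}(38) follows by assembling \eqref{}(37), \eqref{}(26), \eqref{}(39), \eqref{}(40), and absorbing all $\sqrt{\epsilon}$ terms into the $\sqrt[4]{\epsilon}$ bound by shrinking $\epsilon_1$.
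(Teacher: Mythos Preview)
Your proposal is correct and follows essentially the same approach as the paper's proof: you decompose $H$ into the three pieces $H_1\cdot H_2\cdot H_3$ (pulled back through $G_\beta$ and conjugated by $F_{w(x)}$), use the $F$-distance estimate (37) to control $|\partial^i H(t,x)|$, observe that only $H_1$ moves the boundary trace so that $|\partial^b(F_{w(x)})_\sharp\Omega \triangle \partial^b H(t,x)|$ is bounded by the lengths of the sliding arcs from (26), and absorb the resulting $\sqrt{\epsilon}$ term into $C(|\gamma|,\theta)\sqrt[4]{\epsilon}$ after accounting for the distortion factors $2$ and $3$ from (39) and (40). This is exactly the paper's argument.
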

\subsection{Morse Index bound}
Now we prove the Morse index bound of two capillary embedded geodesics on Riemannian disk with a convex boundary $(D^{2},\partial D^{2})$ endowed with a $\theta$-bumpy metric $g$ and the hypothesis $(\star)$. $\theta$-Bumpy metrics are generic in Baire sense by Theorem C.1.
\begin{thm}
Suppose $(D^{2},\partial D^{2}, g)$ is a Riemannian $2$-disk whose boundary is strictly convex and satisfy the hypothesis $(\star)$ with a $\theta$-bumpy metric. Then for each $k=1,2$, there exists a capillary embedded geodesic $\gamma_{k}= \partial_{rel} \Omega_{k}$ with
\begin{equation*}
    index(\gamma_{k}) = k
\end{equation*}
and the $L^{\theta}$-functional of two geodesics satisfy $L^{\theta}(\Omega_{1})<L^{\theta}(\Omega_{2})$.
\begin{proof}
    We explain the necessary modifications from Theorem 7.1 in \cite{K}. We apply Corollary A.3 for the finiteness of capillary embedded geodesics with bounded $L^{\theta}$-functional, and Theorem 5.1 for the capillary version of local min-max theorem.

    Then for each capillary geodesic $\gamma = \partial_{rel}\Omega$ with an index $j<k$, we take the local min-max ball $\{ F_{v} \}_{v \in \overline{B}^{j}}$ to satisfy
    \begin{equation}
    F((F_{v})_{\sharp}(V_{1}),(F_{v})_{\sharp}(V_{2})) \le 2 F(V_{1},V_{2})
\end{equation}
for $V_{1}, V_{2} \in V_{1}(M)$. If $j=1$ and $k=2$, then there exists some $b(\gamma)$ such that
\begin{equation}
    L^{\theta}((F_{v})_{\sharp}(\Omega)) < L^{\theta}(\Omega) - b(\gamma),
\end{equation}
where $\partial_{rel}(\Omega) = \gamma$ and for $v \in \partial B^{1}$ by the concaveness of $L^{\theta}$-functional in the local min-max ball. We pick small $0<s<\min_{\Omega \in D_{w_{k}}} \min(\epsilon(\gamma), b(\gamma)^{4} C(L_{k},\theta)^{-4}/3^{5} ,\epsilon_{1}(\gamma)/3)$, where $\epsilon(\gamma)$ is a constant in Theorem 5.1, $b(\gamma)$ is a constant in (44), $\epsilon_{1}(\gamma)$ is a constant in Corollary 5.11 for $\gamma$ and $C(w_{k},\theta)$ is a constant in (38). By tracking the proof of Lemma 6.1 in \cite{K} and Lemma 5.8, we notice that $C_{\theta}(t) := C(t,\theta)$ can be chosen to be a monotone increasing function. We take $C(w_{k}, \theta)$ here since $w_{k} \ge |\gamma|$ for every capillary embedded geodesic $\gamma = \partial_{rel}\Omega$ with $L^{\theta}(\Omega) = w_{k}$ by the definition of $L^{\theta}$-functional.

We apply Proposition 2.2 for the compact set $\mathcal{S} = \mathcal{D}_{\theta,w_{k}}$ which is compact in the varifold $F$-topology of its corresponding capillary varifolds: There exists $\delta_{s}$ such that
\begin{equation*}
F(\partial ^{\theta} \Omega',\partial ^{\theta} \Omega) < \delta_{s} \Rightarrow \,\, F(\partial ^{i} \Omega',\partial ^{i} \Omega) < s.
\end{equation*}

We apply Theorem 3.6 to obtain a tightened sequence $\{\hat{\Phi}^{j}(x)\}_{x \in dom(\Phi)}$ with the Morse index bound:
\begin{equation}
        \{ \partial^{\theta}\hat{\Phi}^{j} \in V_{1}(D^{2}) : L^{\theta}(\hat{\Phi}^{j}) \ge w_{k}-a \} \subset \bigcup_{\gamma \in \Lambda_{i,w_{k},k}} B^{F}_{\delta_{s}}(\gamma).
        \end{equation}
Then $\Lambda(\{ \hat{\Phi}_{i}\}) = \{ \tilde{\Omega}_{1}, ..., \tilde{\Omega}_{q} \}$. Denote corresponding capillary geodesics $\tilde{\gamma}_{j}$ to be $\tilde{\gamma}_{j}=\partial^{i} \tilde{\Omega}_{j}$. We proceed by the contradiction argument. Let us assume that $index(\tilde{\gamma}_{j})<k$ for every $i=1,...,q$.

Consider the $m_{i}$'th barycentric subdivision $Y_{i}$ of $Dom(X)$ so that $|L^{\theta}(\hat{\Phi}_{i}(x))-L^{\theta}(\hat{\Phi}_{i}(y))|<a/2$ whenever $x,y$ is in a same simplex in $Dom(X)$. Note that $Dom(X) = I$ for $k=1$ and $Dom(X) = S^{1} \times I$ for $k=2$. We define $W_{i}$ to be a union of all $k$-dimensional simplices $t \in Y_{i}$ such that $L^{\theta}(\hat{\Phi}_{i}(x)) \ge w_{k}- a/2$ for some $x \in t$. Then $L^{\theta}(\hat{\Phi}_{i}(y)) \ge w_{k}- a$ for every $y \in W_{i}$. Let connected components be $W_{i,1}, ...,W_{i,r}$. Then by (45), for each $1 \le p \le r$ there exists $1 \le q_{p} \le q$ with
\begin{equation*}
    F(\partial^{i} \hat{\Phi}_{i}(y), \tilde{\gamma}_{q_p}) <\delta_{s}.
\end{equation*}
Note that 
\begin{equation*}
    w_{k}-a \le L^{\theta}(\hat{\Phi}_{i}(y)) \le w_{k}-a/2
\end{equation*}
since $y \in W_{i,p}$ since it belongs to both $W_{i}$ and $Y_{i} \setminus W_{i}$.

Let us assume $j=0$. Then arguments in Theorem 7.1 in \cite{K} apply and conclude to $\partial W_{i,p} =0$. However, since $Dom(X)$ is connected, this cannot happen and $j \ge 1$.

Now we consider the case of $k=2$ and $j=1$. The boundary of each $W_{i,p}$ can be separated to simple closed curves. Let us take an outermost curve $\alpha_{i,p}$ among this curves hence $supp(\alpha_{i,p}) \subseteq \partial W_{i,p}$. Let us define a disk $D_{i,p} \in S^{1} \times I$ to be a disk separated by $\alpha_{i,p}$ containing $W_{i,p}$. Moreover, for some $\{p_{1}, p_{2}\} \subseteq \{1, ..., q\}$, if some $W_{i,p_{1}}$ is contained in the region $D_{i,p_{2}}$, we do not perform the deformation for $\alpha_{i,p_{1}}$. From now, let us only consider curves $\alpha_{i,p}$ which we will apply the deformations, and renumber these curves to $\alpha_{i,1}, ..., \alpha_{i,p'}$. We apply the rest of deformation arguments of Theorem 7.1 in \cite{K} to curves $\{ \alpha_{i,l} \}_{l=1}^{p'}$. Note that interpolation homotopy $H_{2}$ contracts family of domains to a single domain ($(F_{1})_{\sharp} \Omega_{q_p}$ or $(F_{-1})_{\sharp} \Omega_{q_p}$) if $j=1$. Then now consider the family of domains $\Psi_{i}$ defined by
\begin{equation*}
    \Psi_{i} = \hat{\Phi}_{i}(S^{1} \times I \setminus \cup_{j=1}^{p'}D_{i,j})\cup_{\cup_{j=1}^{p'}\alpha_{i,j}} (H_{1} \cdot H_{2})_{\sharp} ([0,1] \times\cup_{j=1}^{p'}\alpha_{i,j}).
\end{equation*}
Notice that $\Psi_{i}$ is a family of domains constructed by the replacement of $\hat{\Phi}_{i}(D_{i,j})$'s by the family of domains $(H_{1} \cdot H_{2})_{\sharp} ([0,1] \times\alpha_{i,j})$. Hence we can reparametrize $\Psi_{i}$ to be a $2$-sweepout $\hat{\Psi}_{i}$ for large $i$ and this satisfies $\sup_{x \in S^{1} \times I}L^{\theta}(x) < w_{k} - \min (b/6,a/2)$ and this gives the contradiction.
\end{proof}

\end{thm}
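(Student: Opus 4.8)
The plan is to combine the width separation and index upper bound already in hand with a Marques--Neves/Lusternik--Schnirelmann deformation, using the local min-max family of Theorem 5.1 and the interpolation estimates of Lemma 5.10 and Corollary 5.11 to certify the index exactly. The easy half is immediate: since the metric is $\theta$-bumpy and $(\star)$ holds, Corollary 4.3 gives $w_{1}(D^{2})<w_{2}(D^{2})$ and, for $k=1,2$, a capillary embedded geodesic $\gamma_{k}=\partial_{rel}\Omega_{k}$ realizing $L^{\theta}(\Omega_{k})=w_{k}(D^{2})$ as a min-max limit, so in particular $L^{\theta}(\Omega_{1})<L^{\theta}(\Omega_{2})$; and by Theorem 3.6 the tightened minimizing sequence of $k$-sweepouts may be chosen so that every min-max sequence accumulates in the $F$-metric on $\Lambda_{\theta,w_{k},k}$, whence $\gamma_{k}$ can be taken with $\mathrm{index}(\gamma_{k})\le k$. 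Everything reduces to the lower bound: some capillary geodesic in the critical set $\Lambda(\{\hat\Phi^{j}\})$ must have index $\ge k$.

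For the lower bound I argue by contradiction along the lines of Theorem 7.1 in \cite{K}. Suppose every $\tilde\gamma_{p}=\partial^{i}\tilde\Omega_{p}$ in the set $\Lambda(\{\hat\Phi^{j}\})=\{\tilde\Omega_{1},\dots,\tilde\Omega_{q}\}$ (finite by Corollary A.3 together with compactness of capillary varifolds) has index $j<k$. To each $\tilde\gamma_{p}$ I attach a local min-max family $\{F_{v}\}_{v\in\overline{B}^{j}}$ from Theorem 5.1, along which $L^{\theta}$ is strictly concave and, when $j=1$, drops by a definite amount $b(\tilde\gamma_{p})>0$ on $\partial B^{1}$; I then fix $s>0$ below $\epsilon(\tilde\gamma_{p})$, $\epsilon_{1}(\tilde\gamma_{p})/3$ and a suitable power of $b(\tilde\gamma_{p})$ over all $p$, and a matching $\delta_{s}$ from Proposition 2.2. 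Applying Theorem 3.6 gives a tightened $k$-sweepout with $\{L^{\theta}(\hat\Phi^{j}_{x})\ge w_{k}-a\}\subset\bigcup_{p}B^{F}_{\delta_{s}}(\tilde\gamma_{p})$; passing to a barycentric subdivision of $\mathrm{Dom}(X)$ fine enough that $L^{\theta}$ oscillates by $<a/2$ on each top cell and letting $W_{i}$ be the union of top cells meeting $\{L^{\theta}\ge w_{k}-a/2\}$, each component $W_{i,p}$ is $\delta_{s}$-close in the $F$-metric to a single $\tilde\gamma_{q_{p}}$ and carries $w_{k}-a\le L^{\theta}\le w_{k}-a/2$ on its boundary. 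If all these had $j=0$, the \cite{K} argument forces $\partial W_{i,p}=0$, impossible since $\mathrm{Dom}(X)$ is connected; hence some index is $\ge 1$, which already closes the case $k=1$.

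For $k=2$ it remains to rule out $j=1$. I decompose each $\partial W_{i,p}\subset S^{1}\times I$ into simple closed curves, keep the outermost non-nested ones $\alpha_{i,1},\dots,\alpha_{i,p'}$ bounding disks $D_{i,l}$, build $w\colon\alpha_{i,l}\to\partial B^{1}$ out of the local min-max structure, and apply Lemma 5.10 with Corollary 5.11 to contract $\hat\Phi^{j}|_{D_{i,l}}$ across $H_{1}\cdot H_{2}$ onto the single domain $(F_{\pm1})_{\sharp}\tilde\Omega_{q_{l}}$, keeping $L^{\theta}\le L^{\theta}((F_{w})_{\sharp}\tilde\Omega_{q_{l}})+C(w_{2},\theta)\sqrt[4]{s}<w_{2}-b/6$ by the choice of $s$. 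Splicing the contracted families back into $\hat\Phi^{j}$ over the $D_{i,l}$ and reparametrizing produces an admissible $2$-sweepout $\hat\Psi_{i}$ with $\sup_{S^{1}\times I}L^{\theta}(\hat\Psi_{i})<w_{2}-\min(b/6,a/2)<w_{2}$, contradicting the definition of $w_{2}(D^{2})$. Hence $\mathrm{index}(\gamma_{2})=2$, and with $\mathrm{index}(\gamma_{1})=1$ and $L^{\theta}(\Omega_{1})=w_{1}<w_{2}=L^{\theta}(\Omega_{2})$ the theorem follows.

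The step I expect to be the main obstacle is checking that the spliced family $\hat\Psi_{i}$ is still a $2$-sweepout, in particular that its endpoint map $f_{t}$ keeps mapping degree $1$ after the disks $D_{i,l}$ are replaced; this is exactly why the deformation is confined to outermost, non-nested curves and why one needs the quantitative $F$- and $L^{\theta}$-control of Lemma 5.10 and Corollary 5.11 rather than a bare contraction. A secondary subtlety, already built into Sections 5.2--5.4, is that the squeezing/interpolation needs negative ambient Gaussian curvature along the geodesic, so one first passes to the conformally perturbed metric $g_{\beta}$ of Lemma 5.3 (which preserves both the contact angle and the convexity of $\partial D^{2}$) and pulls the homotopies back; one must also make the constants $\epsilon(\gamma)$, $\epsilon_{1}(\gamma)$, $b(\gamma)$, $C(|\gamma|,\theta)$ uniform over the finite critical set, using monotonicity of $C(\cdot,\theta)$ and $|\gamma|\le w_{k}$.
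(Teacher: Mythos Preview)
Your proposal is correct and follows essentially the same route as the paper's proof: both proceed by the contradiction scheme of Theorem 7.1 in \cite{K}, invoke Corollary A.3 and Theorem 5.1, choose the constants $s,\delta_{s}$ via Proposition 2.2, tighten via Theorem 3.6, pass to a fine barycentric subdivision to isolate the high-$L^{\theta}$ region $W_{i}$, dispose of the $j=0$ case by the $\partial W_{i,p}=0$ argument, and in the $k=2$, $j=1$ case replace the outermost disks $D_{i,l}$ by the interpolation homotopies of Lemma 5.10/Corollary 5.11 to produce a competitor sweepout with $\sup L^{\theta}<w_{2}$. You even flag two points the paper leaves implicit: that the spliced family must retain the degree-one endpoint condition of a $2$-sweepout, and that the interpolation is performed in the perturbed metric $g_{\beta}$ of Lemma 5.3 and pulled back.
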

Also we obtain a general Morse index characterization by bumpy approximation as in Corollary 7.3 in \cite{K}:
\begin{cor}
For a $2$-Riemannian disk $(D^{2},\partial D^{2},g)$ with a strictly convex boundary satisfying the hypothesis $(\star)$, for $k=1,2$, there exists a capillary embedded geodesic $\gamma_{k}$ with
\begin{equation*}
    index(\gamma_{k}) \le k \le index(\gamma_{k})+ nullity (\gamma_{k}).
\end{equation*}
\end{cor}

\section{The optimality of the condition}

In this section, we prove that the boundary total curvature condition in Theorem 4.4 is sharp. Our proof of the following Proposition 6.1 relies on the elementary construction of ``a rounded Euclidean cone". 

\begin{prop} For a given $k \in (0,\pi)$, there exists a Riemannian $2$-disk $(D^{2},\partial D^{2},g)$ with a strictly convex boundary, interior nonnegative Gaussian curvature and a total signed curvature $\int_{\partial D^{2}} \kappa = k$ such that there exists $\theta \in (0, \pi/2)$ satisfying the following: The Riemannian disk $(D^{2},\partial D^{2},g)$ does not admit a capillary embedded geodesic with a contact angle $\theta$.
\begin{proof}
    Consider a Riemannian disk $M$ of revolution explicitly given by $(u,r(u) \cos t, r(u) \sin t)$ for $u \in [s,1]$ and $t \in [0,2 \pi]$ in $\mathbb{R}^{3}$ endowed with Euclidean metric such that
    \begin{enumerate}
        \item $0<s < 1/2 \cos(k/2)$ and $r(s)=0$,
        \item $r(u) \ge 0$ for $u \in [s,1]$,
        \item $r'_{+}(s)= + \infty$ and $r'(u)>0$,
        \item $r''(u) \le 0$,
        \item $r(u)=(k/\sqrt{4 \pi^{2}-k^{2}})u$ for $u \ge 1/2 \cos (k/2)$.
    \end{enumerate}
    By a simple calculation, we can obtain total signed curvature $\int_{\partial D^{2}} \kappa = k$. Moreover, the condition (4) gives $K \ge 0$ on $M$. Then since a flat plane and a cone are isometric, by straightforward trigonometry, we obtain that a geodesic emanating from a boundary point $(1,k/\sqrt{4 \pi^{2}-k^{2}},0)$ with a contact angle $k/2$ is a critical lasso. Note that the geodesic only passes through the cone part ($x \ge 1/2 \cos (k/2)$). Notice that a small addition of the contact angle of a critical lasso on an Euclidean cone induces an interior self-intersection point. Hence there exists small $\epsilon>0$ such that the geodesic emanating from $(1,k/\sqrt{4 \pi^{2}-k^{2}},0)$ with a contact angle $k/2 + \epsilon \in (0,\pi/2)$ which develops an interior self-intersection point. Since $M$ is rotationally symmetric, we see that $M$ does not admit a capillary embedded geodesic with a contact angle $k/2 + \epsilon$. 
\end{proof}
\end{prop}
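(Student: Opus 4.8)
The plan is to realize the extremal surface as a ``rounded Euclidean cone'': a surface of revolution which is genuinely a flat cone of cone angle $k$ outside a small cap near its tip, into which all of the interior Gaussian curvature, totalling $2\pi-k$, has been swept. Explicitly, I would choose a profile $r$ on $[s,1]$ with $r(s)=0$, $r'_+(s)=+\infty$, $r'>0$, $r''\le 0$, and $r(u)=\lambda u$ for $u\ge u_\ast:=\tfrac12\cos(k/2)$, where $\lambda=k/\sqrt{4\pi^2-k^2}$ is chosen precisely so that the conical part $\{u\ge u_\ast\}$ has cone angle $k$, and with $0<s<u_\ast$. Then $(D^2,\partial D^2,g)$ is the surface of revolution $(u,r(u)\cos t,r(u)\sin t)\subset\mathbb R^3$ with the induced metric. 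The profile can be chosen, as is standard, so that the metric closes up smoothly at the vertex $u=s$ (e.g.\ with $r$ comparable to $\sqrt{u-s}$ there, arc length from the vertex is comparable to $r$); this is routine and I would not dwell on it.

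Next I would verify the three structural hypotheses. For a surface of revolution of this form one has $K=-r''/\big(r(1+(r')^2)^2\big)$, so $r''\le 0$ gives $K\ge 0$ in the interior. The boundary $\partial D^2=\{u=1\}$ lies in the conical part; unrolling that part onto a planar sector of angle $k$, $\partial D^2$ becomes a circular arc of radius $\rho_2:=\sqrt{1+\lambda^2}$, hence has constant geodesic curvature $1/\rho_2>0$, so $\partial D^2$ is strictly convex. Finally, the conical part being flat and the cap carrying exactly the angle defect $2\pi-k$ of a cone of cone angle $k$, Gauss--Bonnet gives $\int_{\partial D^2}\kappa\,ds=2\pi-(2\pi-k)=k$ (equivalently, $\int_{\partial D^2}\kappa\,ds=(1/\rho_2)\cdot 2\pi r(1)=k$ by direct computation).

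The core of the argument is a planar-unrolling analysis of geodesics in the conical region, unrolled to the sector $\{\rho_1\le\rho\le\rho_2,\ 0\le\psi\le k\}$ with the radial edges $\psi=0$ and $\psi=k$ identified, where $\rho_1:=u_\ast\sqrt{1+\lambda^2}$. Fix the boundary point $p_0=(1,\lambda,0)\in\partial D^2$; its two preimages sit at the two edges on the outer arc $\{\rho=\rho_2\}$, and the chord joining them is a straight segment at distance $\rho_2\cos(k/2)$ from the cone tip which meets the outer arc at both ends at the tangent--chord angle $k/2$. As $s<u_\ast$, this chord stays in the conical region, so it descends to a simple geodesic loop through $p_0$ with contact angle $k/2$ --- in fact a critical geodesic lasso. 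Now set $\theta=\tfrac{k}{2}+\epsilon$ with $\epsilon>0$ small, so that $\theta\in(0,\pi/2)$ since $k<\pi$. Consider a geodesic leaving $p_0$ into the interior with contact angle $\theta$: first, Clairaut's relation (its Clairaut constant is $\lambda\cos\theta$, and $r(u)=\lambda\cos\theta$ is solved at $u=\cos\theta>u_\ast$) confines it to the conical region, where it turns around at $u=\cos\theta$; then, unrolled, it is a straight line at distance $\rho_2\cos\theta<\rho_2\cos(k/2)$ from the tip, which as a geodesic on the flat cone passes through two points at a common radius $\rho^\ast=\rho_2\cos\theta/\cos(k/2)$ whose polar angles differ by exactly the cone angle $k$ (here $0<k<\pi$ is used), and these two points are identified on the cone. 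Since $\rho_1<\rho_2\cos\theta<\rho^\ast<\rho_2$ for $\epsilon$ small, this is an interior self-intersection, realized inside the conical region and before the geodesic can return to $\partial D^2$.

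To finish: if $\gamma$ were a capillary embedded geodesic with contact angle $\theta$, then, using the rotational symmetry of $D^2$, I could rotate $\gamma$ so that one endpoint becomes $p_0$; the result is a geodesic issuing from $p_0$ into the interior with contact angle $\theta$, which by the previous paragraph develops an interior self-intersection before reaching $\partial D^2$ again --- contradicting embeddedness. Hence $(D^2,\partial D^2,g)$ carries no capillary embedded geodesic with this $\theta$. I expect the main obstacle to be the ``confinement'' step: ruling out that a competitor capillary geodesic dips into the rounded cap, where the flat unrolling is unavailable. This is exactly why the cap radius $s$ must be taken small enough --- namely $s<\tfrac12\cos(k/2)$ --- for Clairaut's relation to keep all the relevant geodesics in the flat region; verifying that $g$ is smooth at the vertex is the only other point requiring (routine) care.
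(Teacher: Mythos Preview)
Your proposal is correct and follows essentially the same approach as the paper: both construct a rounded Euclidean cone of cone angle $k$, observe that the chord through a boundary point gives a critical lasso at contact angle $k/2$, and argue that a slight increase of the contact angle forces an interior self-intersection, whence rotational symmetry rules out any capillary embedded geodesic at that angle. You in fact supply several details that the paper leaves implicit --- the explicit verification of strict boundary convexity, the Gauss--Bonnet computation of $\int_{\partial D^2}\kappa=k$, the Clairaut confinement argument guaranteeing the relevant geodesics never enter the rounded cap, and the quantitative unrolling calculation locating the self-intersection at radius $\rho_2\cos\theta/\cos(k/2)$ --- so your write-up is, if anything, more complete than the original.
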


 \appendix \section{Compactness theorem of embedded geodesics}
The following theorem asserts the compactness of the set of embedded geodesics on surfaces.
\begin{thm} Let $(M,\partial M,g)$ be a closed surface with strictly convex boundary. For a fixed contact angle $\theta$, if $\{ \gamma_{k} \}$ is a sequence of connected embedded geodesics whose endpoints are on $\partial M$ with length bound
\begin{equation*}
    \mathcal{H}^{1}(\gamma_{k}) \le L < \infty
\end{equation*}
for some fixed constant $L \in \mathbb{R}$ independent of $k$. Then up to subsequence, there exists a connected capillary embedded geodesic $\gamma$ with a contact angle $\theta$ where $\gamma_{k} \rightarrow \gamma$ in the varifold sense with
\begin{equation*}
    \mathcal{H}^{1}(\gamma) \le L < \infty
\end{equation*}
and convergence is smooth and graphical for all $x \in M$. Moreover, if for every $k$, $\gamma_{k}$'s are capillary embedded geodesics with a contact angle $\theta$, then $\gamma$ is also a capillary embedded geodesic with a contact angle $\theta$. The multiplicity of convergence $m$ is 1. Furthermore, if $\gamma_{k} \cap \gamma = \emptyset$ eventually, then $\gamma$ is stable, and $index (\gamma) \ge 1$ otherwise.
\begin{proof}
We can directly adopt the compactness theorem of embedded geodesics without boundary of Theorem A.1 in \cite{K}. For the multiplicity part, a small tubular neighborhood of capillary embedded geodesic $\gamma$ is diffeomorphic to rectangular strips and this gives multiplicity $1$. Moreover, by standard arguments of elliptic equations, we can ensure the fact that the first eigenfunction $f$ associated with the second variation formula (1) is a signed function and this gives a stability analysis as in Claim 4 of Theorem A.1 in \cite{K}. The contact angle condition is preserved along the convergence. 
\end{proof}
\end{thm}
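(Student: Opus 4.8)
The plan is to run the standard varifold-compactness-plus-regularity scheme, importing the interior arguments from Theorem A.1 in \cite{K} and supplying the boundary analysis. First I would pass to the varifold limit. Each $\gamma_{k}$ is an embedded geodesic with endpoints on $\partial M$, so $|\gamma_{k}|$ is stationary in $\mathring M$ with first variation in $M$ a measure supported at the two endpoints (so of uniformly bounded mass), and when the $\gamma_{k}$ meet $\partial M$ with angle $\theta$ the capillary varifold $|\gamma_{k}|+\cos\theta\,\partial^{b}$ is stationary with respect to $\mathfrak{X}_{tan}(M)$ by Proposition 2.1. The length bound $\mathcal H^{1}(\gamma_{k})\le L$ gives a uniform mass bound, so by Allard's compactness theorem, after passing to a subsequence $|\gamma_{k}|$ converges in the varifold sense to an integral $1$-varifold $V$ with $\|V\|(M)\le L$ that is again stationary for the capillary functional. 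The endpoints $p_{k}^{1},p_{k}^{2}\in\partial M$ lie in a compact set, so along a further subsequence they converge to $p^{1},p^{2}\in\partial M$.

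Next I would upgrade this to smooth graphical convergence. In the interior this is exactly the argument of Theorem A.1 in \cite{K}: parametrizing each $\gamma_{k}$ by arclength, the parametrizations solve the autonomous geodesic ODE with unit-speed data, so Arzelà--Ascoli together with ODE regularity yields $C^{\infty}_{loc}$ subconvergence to a unit-speed geodesic $c$ whose image is $\operatorname{supp}V$. At the boundary one uses that, since $\theta\in(0,\pi/2)$, the $\gamma_{k}$ meet the strictly convex $\partial M$ strictly transversally; boundary regularity for the geodesic equation with prescribed endpoint angle then promotes this to $C^{\infty}$ convergence up to and including the endpoints, so there is no concentration and the convergence is smooth and graphical everywhere. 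The limit $\gamma:=\operatorname{supp}V$ is a connected embedded geodesic arc from $p^{1}$ to $p^{2}$ with $\mathcal H^{1}(\gamma)\le L$. For multiplicity $m=1$: a small tubular neighborhood $N_{h}(\gamma)$ is a topological rectangle (the normal bundle of an arc is trivial and two-sided), and for large $k$ the embedded arc $\gamma_{k}$ lies in $N_{h}(\gamma)$ joining its two short boundary arcs; such an arc cannot wind, hence is a single normal graph over $\gamma$, and two distinct geodesic graphs joining the same two ends would cross by the Jacobi-field comparison, so $m=1$.

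To see that $\gamma$ is a capillary geodesic with contact angle $\theta$, observe that $\mathcal H^{1}(\gamma_{k})\to\mathcal H^{1}(\gamma)$ and $c_{k}'(0),c_{k}'(L_{k})\to c'(0),c'(L)$ by the $C^{1}$-convergence of parametrizations up to the endpoints, so the angle conditions $\langle c'(0),v_{p^{1}}\rangle=\cos\theta$ and $\langle c'(L),v_{p^{2}}\rangle=\cos\theta$ pass to the limit (the same argument works verbatim if only $\theta_{k}\to\theta$). One must rule out $p^{1}=p^{2}$: in that degenerate case the limiting unit-speed curve would be a geodesic loop of positive length (the length being bounded below, e.g.\ by Proposition 3.3 in all our applications) satisfying the lasso angle condition (4), which is excluded by hypothesis $(\star)$. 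Hence $\gamma$ is a genuine connected capillary embedded geodesic with contact angle $\theta$, and the varifold limit is the multiplicity-one $|\gamma|$.

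Finally I would establish the stability/index dichotomy, following Claim 4 of Theorem A.1 in \cite{K}. Writing $\gamma_{k}$ as a normal graph $u_{k}\nu$ over $\gamma$ for large $k$ and setting $w_{k}:=u_{k}/\sup_{\gamma}|u_{k}|$, the fact that both $\gamma_{k}$ and $\gamma$ are geodesics forces (after subtracting the quadratically small error in the graphical geodesic equation) $w_{k}\to w\not\equiv 0$ in $C^{\infty}$ up to the endpoints, where $w$ solves the Jacobi equation $w''+Kw=0$ on $\gamma$ with the natural Robin boundary condition $\partial_{\eta}w=\tfrac{\kappa(p^{i})}{\sin\theta}\,w$ at each endpoint $p^{i}$, i.e.\ the boundary condition associated to the second variation (3). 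If $\gamma_{k}\cap\gamma=\emptyset$ eventually, then each $u_{k}$, hence $w$, is signed; a signed Jacobi field is, up to sign, the first eigenfunction of the stability operator with eigenvalue $0$, so $Q\ge 0$ and $\gamma$ is stable. If $\gamma_{k}\cap\gamma\ne\emptyset$ infinitely often, then since two distinct geodesics cross transversally $u_{k}$ changes sign, so $w$ changes sign, hence the bottom eigenvalue of $Q$ is strictly negative and $\operatorname{index}(\gamma)\ge 1$. The main obstacle I expect is precisely this boundary analysis — obtaining the $C^{1}$-up-to-the-endpoints control of the parametrizations and of the normalized graphs $w_{k}$ (the domains of the graphs shift slightly with $k$), identifying the limiting Robin condition, and verifying by a Hopf-type maximum-principle argument that a signed Jacobi field is the first eigenfunction; the last point requires care because the boundary term $-\tfrac1{\sin\theta}\kappa(p^{i})f^{2}(p^{i})$ in (3) has a destabilizing sign (strict convexity $\kappa>0$), so one works with a Robin rather than a Dirichlet eigenvalue problem.
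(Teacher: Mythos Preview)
Your proposal is correct and follows essentially the same route as the paper: adopt the interior compactness of Theorem~A.1 in \cite{K} (Arzel\`a--Ascoli plus ODE regularity for the geodesic parametrizations), read off multiplicity one from the fact that a tubular neighborhood of an arc is a rectangular strip so an embedded arc inside it is a single graph, and run the stability/index dichotomy via the signed first eigenfunction of the Jacobi operator with the Robin boundary condition coming from (3), exactly as in Claim~4 of \cite{K}; your write-up simply supplies far more boundary detail than the paper's terse sketch. The one point to flag is that your appeal to hypothesis~$(\star)$ and Proposition~3.3 to exclude the endpoint collapse $p^{1}=p^{2}$ imports assumptions not present in the theorem as stated --- the paper's own proof is silent on this degeneration too, and in every application in the paper $(\star)$ is in force, so nothing downstream is affected, but strictly speaking that step lies outside the hypotheses of Theorem~A.1 itself.
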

Following corollary shows the compactness property of associated capillary varifolds of capillary embedded geodesics.
\begin{cor} Let $(M,\partial M,g)$ be a closed surface with strictly convex boundary. Suppose that a sequence of capillary geodesics $\{ \gamma_{k} \}$ with contact angle $\theta$ satisfies $\partial^{i} \Omega_{k} = \gamma_{k}$ where $\Omega_{k} \in \mathcal{D}(M)$ and the $L^{\theta}$-functional bound as
\begin{equation*}
L^{\theta}(\Omega_{k}) \le L < \infty.
\end{equation*}
Then up to subsequence, there exists a domain $\Omega \in \mathcal{D}(M)$ such that $\partial^{i}\Omega = \gamma$ and an associated capillary varifold $\partial^{\theta}\Omega$ with
\begin{equation*}
    L^{\theta}(\Omega) \le L < \infty.
\end{equation*}
Hence, we have a varifold convergence of capillary varifolds $\partial^{\theta}\Omega_{k} \rightarrow \partial^{\theta} \Omega$.
\begin{proof}
By Theorem A.1 above, we have a smooth convergence $\gamma_{k} \rightarrow \gamma$ with the conservation of contact angles. This gives a convergence of endpoints of $\gamma_{k}$ to endpoints of $\gamma$, and convexity of boundary implies the existence of domain $\Omega \in \mathcal{M}$ such that $\gamma = \partial \Omega$ and associated capillary varifolds $\partial^{\theta}\Omega$. Then we obtain the convergence of varifolds $\partial^{i} \Omega_{k} \rightarrow \partial^{i} \Omega$ and $\partial^{b} \Omega_{k} \rightarrow \partial^{b} \Omega$, and convergence of capilary varifolds $\partial^{\theta} \Omega_{k} \rightarrow \partial^{\theta} \Omega$ follows.
\end{proof}
\end{cor}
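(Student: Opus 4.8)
The plan is to reduce the statement to the compactness theorem for embedded geodesics (Theorem~A.1) together with a bookkeeping argument for the boundary varifolds. First I would observe that since $\theta\in(0,\pi/2)$ we have $\cos\theta\ge 0$, so
\[
    \mathcal{H}^{1}(\gamma_{k}) = |\partial^{i}\Omega_{k}| \le |\partial^{i}\Omega_{k}| + \cos\theta\,|\partial^{b}\Omega_{k}| = L^{\theta}(\Omega_{k}) \le L .
\]
Thus $\{\gamma_{k}\}$ is a sequence of connected embedded geodesics with endpoints on $\partial M$ and uniformly bounded length, and Theorem~A.1 applies: after passing to a subsequence, $\gamma_{k}\to\gamma$ smoothly and graphically, with multiplicity one, where $\gamma$ is a connected capillary embedded geodesic with the same contact angle $\theta$ and $\mathcal{H}^{1}(\gamma)\le L$. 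In particular $\partial^{i}\Omega_{k}\to\partial^{i}\Omega$ in the varifold sense for \emph{any} domain $\Omega$ with $\partial_{rel}(\Omega)=\gamma$.

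Next I would produce the limit domain. Smooth convergence $\gamma_{k}\to\gamma$ forces the endpoint pairs $\partial\gamma_{k}\subset\partial M$ to converge to $\partial\gamma$, and since $\partial M$ is strictly convex and $\gamma$ meets $\partial M$ transversally (the contact angle $\theta$ is preserved and lies in $(0,\pi/2)$), the arc $\gamma$ cuts $M$ into two pieces, each a legitimate element of $\mathcal{D}(M)$. To pin down which of the two is the limit of the $\Omega_{k}$, I would work in a fixed tubular neighborhood of $\gamma$: for large $k$ the curve $\gamma_{k}$ is a graph over $\gamma$ there, so $\Omega_{k}$ restricted to that neighborhood lies on a definite side of $\gamma_{k}$; passing to a further subsequence we may assume this side is the same for all $k$, and this selects $\Omega\in\mathcal{D}(M)$ with $\partial^{i}\Omega=\gamma$. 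Outside the neighborhood the region is determined by this choice, so $\Omega_{k}\to\Omega$ in the Hausdorff topology, and $L^{\theta}(\Omega)\le\liminf_{k}L^{\theta}(\Omega_{k})\le L$.

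It remains to upgrade this to varifold convergence of the \emph{capillary} varifolds. The key point is that $\partial^{b}\Omega_{k}=\partial\Omega_{k}\mres\partial M$ is simply the sub-arc of $\partial M$ bounded by the ordered endpoints $\partial\gamma_{k}$ on the side consistent with $\Omega_{k}$; since these endpoints converge to $\partial\gamma$ and the side has been stabilized, $\partial^{b}\Omega_{k}\to\partial^{b}\Omega$ as $1$-varifolds on $\partial M$ (in particular $|\partial^{b}\Omega_{k}|\to|\partial^{b}\Omega|$). Combining with $\partial^{i}\Omega_{k}\to\partial^{i}\Omega$ and linearity of $\partial^{\theta}\Omega=\partial^{i}\Omega+\cos\theta\,\partial^{b}\Omega$ in the varifold $F$-metric gives $\partial^{\theta}\Omega_{k}\to\partial^{\theta}\Omega$, which completes the proof.

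I expect the only genuine subtlety to be the selection of the correct side in passing from convergence of the geodesic arcs to convergence of the domains — that is, ensuring the boundary pieces $\partial^{b}\Omega_{k}$ do not ``jump'' to the complementary arc of $\partial M$ in the limit. Everything else is a direct consequence of Theorem~A.1 and the definitions, and strict convexity of $\partial M$ together with transversality of the contact angle are exactly what rule out degenerate limiting configurations.
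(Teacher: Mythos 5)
Your proposal is correct and follows essentially the same route as the paper: reduce to Theorem A.1 via the length bound $\mathcal{H}^{1}(\gamma_{k})\le L^{\theta}(\Omega_{k})\le L$, obtain the limit geodesic and domain, and then pass to varifold convergence of the interior and boundary pieces separately. The only difference is that you spell out the side-selection (subsequence) argument that pins down which complementary region is the limit domain, a point the paper's proof leaves implicit.
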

We have the following finiteness theorem on $\theta$-bumpy metric.
\begin{cor}
    On Riemannian $2$-disk $(M,\partial M,g)$ with strictly convex boundary and a $\theta$-bumpy metric, the set of all domains $D_{L}$ whose relative boundary is a capillary embedded geodesic whose boundary points are on $\partial M$ and $L^{\theta}$-functional achieves the value less than or equal to $L$ is finite.
    \begin{proof}
    Suppose $D_{L}$ is an infinite set. By Corollary A.2, there exists an infinite sequence of associated capillary varifolds $\{\partial^{\theta} \Omega_{k} \}$ converging to $\partial^{\theta} \Omega$ in $\Lambda_{\theta,L}$. This induces a nontrivial Jacobi field on $\partial^{\theta}\Omega$ and this contradicts to the bumpiness of the metric.
    \end{proof}
\end{cor}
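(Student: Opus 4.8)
The plan is to run a contradiction argument along the lines of the standard bumpy--metric finiteness results. Suppose $D_{L}$ were infinite. Since each embedded arc from $\partial M$ to $\partial M$ bounds at most two domains in $\mathcal{D}(M)$, an infinite $D_{L}$ yields an infinite sequence $\{\Omega_{k}\}_{k\in\mathbb{N}}$ of domains in $D_{L}$ whose relative boundaries $\gamma_{k}:=\partial_{rel}(\Omega_{k})$ are pairwise distinct capillary embedded geodesics with contact angle $\theta$ and $\mathcal{H}^{1}(\gamma_{k})\le L$. By Corollary A.2 together with Theorem A.1, after passing to a subsequence the capillary varifolds converge, $\partial^{\theta}\Omega_{k}\to\partial^{\theta}\Omega$, for some $\Omega\in\mathcal{D}(M)$ with $\gamma:=\partial_{rel}(\Omega)$ a capillary embedded geodesic of contact angle $\theta$, and the convergence $\gamma_{k}\to\gamma$ is smooth, graphical, and of multiplicity one.

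The heart of the matter is to extract from this convergence a nontrivial Jacobi field on $\gamma$. By the multiplicity-one graphical convergence, for $k$ large one may write $\gamma_{k}$ as the normal graph over $\gamma$ of a function $u_{k}\in C^{\infty}(\gamma)$ with $u_{k}\to 0$ in every $C^{m}$ norm and $u_{k}\not\equiv 0$ (the $\gamma_{k}$ being pairwise distinct, all but at most one differ from $\gamma$). The requirement that $\gamma_{k}$ be a geodesic meeting $\partial M$ at angle $\theta$ at each endpoint is a second-order two-point boundary value problem for $u_{k}$; normalizing $v_{k}:=u_{k}/\lVert u_{k}\rVert_{C^{2}(\gamma)}$ and using the Schauder-type estimates available for this one-dimensional problem, a further subsequence converges in $C^{2}$ to a limit $v$ with $\lVert v\rVert_{C^{2}(\gamma)}=1$, so $v\not\equiv 0$. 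Passing to the limit in the linearization of the geodesic equation and of the two boundary conditions identifies $v$ with an element of the kernel of the Jacobi operator associated to the second variation quadratic form $Q$; that is, $v$ is a nontrivial Jacobi field of the capillary geodesic $\gamma$.

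This contradicts the hypothesis that $g$ is $\theta$-bumpy, which by definition prohibits any capillary embedded geodesic of contact angle $\theta$ from carrying a nontrivial Jacobi field, and so $D_{L}$ must be finite. The only genuinely technical step is the middle one: checking that the renormalized graph functions converge to a bona fide kernel element of the Jacobi operator, and in particular that the capillary boundary condition linearizes correctly in the limit. In the curve setting, however, this reduces to an elementary ODE argument supplemented by boundary Schauder estimates, so I do not expect it to pose real difficulty; the essential ingredient is the compactness already packaged in Corollary A.2.
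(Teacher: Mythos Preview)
Your argument is correct and follows essentially the same route as the paper: assume $D_{L}$ is infinite, invoke the compactness of Corollary~A.2 (and Theorem~A.1) to obtain a subsequence of capillary varifolds converging smoothly with multiplicity one to a limit capillary geodesic, and derive a nontrivial Jacobi field on the limit to contradict $\theta$-bumpiness. The paper's own proof is terser, asserting in one line that the convergence ``induces a nontrivial Jacobi field'' without spelling out the normalization $v_{k}=u_{k}/\lVert u_{k}\rVert_{C^{2}}$ and the passage to the limit in the linearized boundary-value problem; you have simply unpacked that step explicitly.
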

\section{Regularity theorem of curve shortening flow with fixed boundary points}

In this section, we include the existence of regularity theorem of curve shortening flow with fixed boundary points. While it is stated and proven in Edelen \cite{E}, we include it here without the proof for completeness. The arguments of the proof relies on the distance comparison arguments which is initiated by Huisken \cite{H} (See also Theorem 2.6 in \cite{H}).
\begin{thm} [\cite{E}, Corollary 3.2]
Let $(M,\partial M,g)$ be a compact Riemannian surface with a strictly convex boundary. For $p,q \in \partial M$, let $F_{0}:[a,b] \rightarrow M$ be a smooth embedded curve where $F_{0}(a)=p$ and $F_{0}(b)=q$. Then the curve shortening flow for the initial curve $F_{0}$ with boundary conditions $F_{t}(a)=F_{0}(a), \, F_{t}(b) = F_{0}(b)$ has a smooth solution for all times $t>0$, which for $t \rightarrow \infty$ converges to a geodesic segment connecting $p$ and $q$.
\end{thm}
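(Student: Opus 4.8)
The plan is to follow the by-now-standard program for embedded curve shortening flow, adapted to the Dirichlet boundary condition and organized as in Huisken \cite{H} and Edelen \cite{E}. First I would establish short-time existence and uniqueness: in a tubular neighborhood of the initial embedded curve $F_0$ one writes the evolving curve as a normal graph over $F_0$, which turns $\partial_t F = \kappa N$ into a scalar quasilinear uniformly parabolic equation with the two endpoints held fixed; classical parabolic Schauder theory then gives a smooth solution on a maximal time interval $[0,T)$, embedded for $t$ small.

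Next I would show the flow remains embedded and interior for all $t<T$. The key device is Huisken's chord-arc comparison: for $p,q$ on the curve let $d(p,q)$ be the ambient distance and $\ell(p,q)$ the arclength between them, and track $\inf d/\ell$ together with the relevant ambient-curvature corrections. A maximum-principle argument shows this ratio cannot degenerate to $0$, so no self-intersection can form in the interior. To prevent the curve from running into $\partial M$ away from the fixed endpoints, one uses the strict convexity of $\partial M$ as a barrier: at a hypothetical first interior contact time the evolving curve would be tangent to $\partial M$ from inside, contradicting the maximum principle since $\partial M$ has strictly positive geodesic curvature while the evolving curve moves only by its own curvature. This confines the flow to $\mathring{M}$ except at $p,q$.

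Then I would derive the uniform curvature estimate that upgrades $T=\infty$. Length is monotone nonincreasing and bounded below by $d(p,q)>0$, so $\int_0^T\!\!\int \kappa^2\,ds\,dt<\infty$; combined with the interior gradient and curvature estimates coming from the chord-arc bound (Huisken-type monotonicity, adapted to the compact surface with its bounded ambient curvature), one rules out interior singularities. The remaining — and I expect hardest — step is the \emph{boundary regularity}: controlling the tangent direction and curvature of the evolving curve up to the pinned endpoints $p,q\in\partial M$, where the Dirichlet condition could a priori allow oscillation or curvature concentration. Here one localizes the distance comparison near each endpoint and again exploits strict convexity of $\partial M$ to obtain a barrier for the contact angle, following Edelen \cite{E}; this yields curvature bounds uniform up to the boundary, hence long-time existence with smooth embedded curves.

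Finally, with all-time existence and uniform bounds in hand, $\int_0^\infty\!\!\int\kappa^2\,ds\,dt<\infty$ produces times $t_j\to\infty$ with $\int\kappa^2\,ds\to 0$; by the curvature estimates and the fixed endpoints the curves subconverge smoothly to an embedded geodesic segment joining $p$ and $q$. Upgrading subconvergence to full convergence of the flow as $t\to\infty$ follows from a {\L}ojasiewicz--Simon gradient inequality for the length functional with Dirichlet endpoints. As emphasized, the principal obstacle is the boundary regularity at the pinned endpoints, which is exactly the content of Edelen's distance-comparison analysis and the reason strict convexity of $\partial M$ is assumed throughout; for this reason I would simply cite \cite{E} rather than reproduce that argument.
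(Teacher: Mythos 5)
The paper does not prove this statement at all: Appendix B explicitly includes it \emph{without proof}, quoting it as Corollary 3.2 of Edelen \cite{E} and noting only that the argument rests on Huisken's distance comparison \cite{H}. Your outline follows exactly that distance-comparison strategy and, like the paper, defers to \cite{E} for the substantive boundary regularity, so it is consistent with (indeed, more detailed than) the paper's treatment.
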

\section{Bumpy metric theorem of capillary embedded geodesics}
In this section, we prove the bumpy metric theorem of capillary embedded geodesics. The bumpy metric theorem asserts that capillary embedded geodesics do not have nontrivial Jacobi fields in a generic metric. We will mostly modify the bumpy metric theorem in embedded minimal hypersurface in \cite{W}. Our modifications mainly come from the free boundary version in \cite{ACS} and the stationary geodesic network version of \cite{CM} (See also \cite{St}). We outline the proof and mainly explain about the necessary modifications.

We state the bumpy metric theorem first:
\begin{thm}
   Let $(M,\partial M,g)$ be a compact Riemannian surface with a strictly convex boundary. For a $C^{\infty}$-generic set of a metric on $M$, no capillary embedded geodesic with contact angle $\theta$ on $M$ (whose boundary points are on $\partial M$) has a nontrivial Jacobi fields.
\end{thm}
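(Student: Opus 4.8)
The plan is to run the Sard--Smale/White framework for bumpy metric theorems (\cite{W}), in the form adapted to the free boundary case by Ambrozio--Carlotto--Sharp \cite{ACS} and to stationary geodesic networks by \cite{CM}, with the capillary contact-angle condition playing the role of the free boundary condition. Fix a large integer $\ell$ and $\alpha\in(0,1)$, and let $\mathcal{G}^{\ell,\alpha}$ be the open subset of the Banach manifold of $C^{\ell,\alpha}$ metrics on $M$ for which $\partial M$ is strictly convex. For the fixed contact angle $\theta$, define the universal capillary geodesic space
\[
\mathcal{Z}=\bigl\{(c,g): g\in\mathcal{G}^{\ell,\alpha},\ c \text{ is a connected capillary embedded geodesic of }(M,\partial M,g)\text{ with contact angle }\theta\bigr\},
\]
with projection $\Pi:\mathcal{Z}\to\mathcal{G}^{\ell,\alpha}$, $(c,g)\mapsto g$. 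The goal is to show that $\mathcal{Z}$ is a Banach manifold, that $\Pi$ is a Fredholm map of index $0$, and that $g$ is a regular value of $\Pi$ exactly when $g$ is $\theta$-bumpy; the theorem then follows from the Sard--Smale theorem together with the standard Taubes-type argument upgrading genericity from $C^{\ell,\alpha}$ to $C^\infty$.

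First I would fix the local parametrization of $\mathcal{Z}$ near a point $(\gamma_0,g_0)$. Extending $\gamma_0$ slightly past $\partial M$ inside the ambient closed surface $M$, nearby curves with endpoints on $\partial M$ are written as normal graphs $u$ over this extension via the $g_0$-normal exponential map, where the requirement that the endpoints lie on $\partial M$ with contact angle $\theta$ becomes a smooth nonlinear boundary constraint relating $u$ and $u'$ at the (variable) endpoints; this automatically disposes of the reparametrization ambiguity. A neighbourhood of $(\gamma_0,g_0)$ in $\mathcal{Z}$ is then the zero set of a smooth map $\Psi:\mathcal{U}\times\mathcal{G}^{\ell,\alpha}\to C^{0,\alpha}(\gamma_0)\times\mathbb{R}^2$, whose first component is the geodesic curvature of the graph of $u$ in $g$ and whose $\mathbb{R}^2$-component measures the contact-angle defect at the two endpoints. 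The partial linearization $D_u\Psi|_{(\gamma_0,g_0)}$ is, by the second variation formula for $L^\theta$, the Jacobi operator $Lf=-f''-Kf$ on $\gamma_0$ together with the Robin boundary conditions $f'(p_i)=\tfrac{\kappa(p_i)}{\sin\theta}f(p_i)$ read off from the boundary terms of $Q$; this is a regular self-adjoint Sturm--Liouville problem, hence Fredholm of index $0$, with kernel and cokernel both equal to the space of Jacobi fields of $\gamma_0$.

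The key step, and the expected main obstacle, is the surjectivity of the \emph{full} linearization $D\Psi|_{(\gamma_0,g_0)}=D_u\Psi\oplus D_g\Psi$, which makes $\mathcal{Z}$ a manifold. By self-adjointness it suffices to show that no nontrivial Jacobi field $f$ of $\gamma_0$ annihilates the image of $D_g\Psi$. Since $f$ solves a second-order linear ODE, unique continuation forces $f(x_0)\neq 0$ at some interior point $x_0$ of $\gamma_0$, so it is enough to produce a metric variation $h$ supported near $x_0$ with $\langle D_g\Psi(h),f\rangle\neq 0$. I would identify $\langle D_g\Psi(h),f\rangle$ with the $s$-derivative at $s=0$ of the first variation of $L^\theta_{g_0+sh}$ along $f\nu$ at $\gamma_0$, and, by symmetry of mixed partials, with the $t$-derivative at $t=0$ of the first variation in $h$ of the metric-functional $g\mapsto L^\theta_g$ evaluated on the curves obtained by pushing $\gamma_0$ along $f\nu$; were this to vanish for every $h$, the standard ``first variation of length with respect to the metric'' identity would force $f\equiv 0$ near $x_0$, a contradiction. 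In two dimensions this is particularly clean: metric variations reduce up to diffeomorphism to conformal factors, and the effect of a conformal change on the geodesic curvature of a curve --- and, via the conformal transformation law for the geodesic curvature of $\partial M$ used in Lemma~5.3 --- is explicit, so taking $h=2\varphi g_0$ with $\varphi$ a suitable bump near $x_0$ does the job. The boundary does not cause extra trouble: if $f$ vanished on the interior it would vanish identically, so interior perturbations always suffice, and if needed the contact-angle defect component of $\Psi$ is matched by perturbing the metric in a collar of $\partial M$ away from $\partial\gamma_0$.

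Granting surjectivity, $\mathcal{Z}$ is a Banach manifold, $\Pi$ is Fredholm of index $\operatorname{ind}(D_u\Psi)=0$, and the vertical tangent space of $\mathcal{Z}$ at $(\gamma,g)$ is exactly the space of Jacobi fields of $\gamma$; hence $g$ is a regular value of $\Pi$ precisely when $(M,\partial M,g)$ carries no capillary embedded geodesic with a nontrivial Jacobi field. The Sard--Smale theorem then yields a residual, hence dense, set of $\theta$-bumpy metrics in $\mathcal{G}^{\ell,\alpha}$. To pass to $C^\infty$ I would write the $\theta$-bumpy $C^\infty$ metrics as $\bigcap_{m\in\mathbb{N}}\mathcal{B}_m$, where $\mathcal{B}_m$ is the set of $C^\infty$ metrics for which every capillary embedded geodesic with $L^\theta\le m$ is nondegenerate: each $\mathcal{B}_m$ is open in the $C^\infty$ topology by the compactness theorem (Corollary A.2), since a convergent sequence of counterexample metrics would produce, in the limit, a capillary embedded geodesic of the limiting metric carrying a nontrivial Jacobi field, and each $\mathcal{B}_m$ is dense because it contains the restriction to $C^\infty$ of the dense set of bumpy $C^{\ell,\alpha}$ metrics. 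Baire's theorem then finishes the proof. The technical heart remains the metric-variation surjectivity of Step three, together with the careful choice of function spaces near the variable endpoints; all other ingredients are routine adaptations of \cite{W}, \cite{ACS}, and \cite{CM}.
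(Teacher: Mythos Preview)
Your proposal is correct and follows essentially the same route as the paper: both run the White/Sard--Smale framework, set up the universal moduli space of pairs (metric, capillary geodesic), show the projection to metrics is Fredholm of index $0$ with kernel equal to the Jacobi fields, and then invoke Sard--Smale together with the standard $C^{\ell,\alpha}\to C^\infty$ upgrade. The paper packages the linearization and Fredholm analysis in an abstract Lagrangian formulation (its Propositions~C.4--C.6) and defers the metric-variation surjectivity and the passage to $C^\infty$ to \cite{ACS}, whereas you spell out both of these steps more concretely (via interior conformal perturbations and the Taubes-type intersection argument); the substance is the same.
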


For $q \ge 3$ a positive integer or $\infty$, denote $\Gamma^{q}$ to be the set of $C^{q}$ metrics on $M$ endowed with the smooth topology. For $3 \le j+1 \le q$, let us consider the space of embeddings $Emb(I, M)$ of $I= [0,1]$ to $M$. For an embedding $w \in Emb(I,M)$ we can take an equivalence class $[w_{\Omega}]$ associated with a connected domain $\Omega \subseteq M$ by
\begin{equation*}
    [w_{\Omega}] : = \{ (w \cdot \phi,\Omega) | \phi \in \text{Diff}(I) \text{ and } w = \partial^{i}\Omega \},
\end{equation*}
We consider the set $\mathcal{PE} (I,M)$ of equivalence classes $[w_{\Omega}]$ for an embedding $w$ and corresponding domain $\Omega$. We take a subset $S^{q}$ of $\mathcal{PE} = \mathcal{PE} (I,M)$ defined by
\begin{equation}
    \mathcal{S}^{q}:= \{ (\gamma, [w_{\Omega}]) \in \Gamma^{q} \times \mathcal{PE} | w \text{ is a }  \gamma-\text{embedded geodesic with a contact angle } \theta\},
\end{equation}
where $\theta \in (0, \pi/2)$ is a fixed angle and defined by the outer angle between $T_{z} \partial M$ and $T_{z} w(I)$ for $z \in w(I) \cap \partial M$ from $\Omega$. Let us endow the topology in $\mathcal{PE}$ induced by the Hausdorff distance between two associated domains of the embeddings, and endow the topology in $\mathcal{S}^{q}$ as an inherited topology from the product topology in $\Gamma^{q} \times \mathcal{PE}$. Let us denote $\pi^{q}_{\mathcal{S}}:\mathcal{S}^{q} \rightarrow \Gamma^{q}$ to be the projection map onto the first factor as \begin{equation}
    \pi^{q}_{\mathcal{S}}(\gamma, [w_{\Omega}] ) = \gamma.
\end{equation}

Given a Sard-Smale Theorem (Theorem C.2, Theorem 1.2 in \cite{Sm}) and a capillary version of the structure theorem (Theorem C.3, cf. Theorem 35 of \cite{ACS}) below, we can modify arguments in Section 7.1 of \cite{ACS} and apply Theorem C.2 to prove Theorem C.1. We prove Theorem C.3 in Subsection C.1.
\begin{thm} [\cite{Sm}, Theorem 1.2]
Let $\mathcal{M}$ and $\Gamma$ be a separable $C^{q}$-Banach manifolds. Suppose $\pi : \mathcal{M} \rightarrow \Gamma$ is a $C^{q}$ Fredholm map of Fredholm index $0$. Then the set of regular values for $\pi$ is a Baire generic set.
\end{thm}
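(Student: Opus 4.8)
The plan is to follow the classical route to the infinite‑dimensional Sard theorem (Smale \cite{Sm}): localize, use the Fredholm hypothesis together with the implicit function theorem to reduce to a finite‑dimensional problem fibered over a Banach space, and then combine the finite‑dimensional Sard theorem with a Baire‑category argument.

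First I would reduce to a local statement. Since $\mathcal{M}$ is separable and metrizable, hence second countable, it is a countable union of open sets $V_i$, each contained in a chart of $\mathcal{M}$ whose $\pi$‑image lies in a chart of $\Gamma$, so that $\pi|_{V_i}$ is a $C^q$ map between open subsets of Banach spaces. A point $y\in\Gamma$ is a regular value of $\pi$ precisely when it is a regular value of every $\pi|_{V_i}$, and a countable intersection of residual (comeager) subsets of the Baire space $\Gamma$ is again residual; hence it suffices to prove the statement for a single $C^q$ Fredholm map $\pi\colon U\to F$ of index $0$, with $U$ open in a Banach space $E$ and $F$ a Banach space.

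Next I would straighten $\pi$ near a point $x_0\in U$ via the implicit function theorem. Put $K=\ker D\pi_{x_0}$ (finite‑dimensional) and $F_1=\operatorname{im}D\pi_{x_0}$ (closed, of finite codimension), choose closed complements $E=K\oplus E_1$ and $F=F_1\oplus R$ with $\dim R=\dim K$ (forced by index $0$), and let $P_K$, $Q_{F_1}$ be the corresponding projections. The map $\Psi(x):=(P_K(x-x_0),\,Q_{F_1}\pi(x))$ has invertible differential at $x_0$ — both injectivity and surjectivity of $D\Psi_{x_0}$ come down to the fact that $D\pi_{x_0}$ restricts to a Banach isomorphism $E_1\to F_1$ — so $\Psi$ is a local $C^q$ diffeomorphism, and in the coordinates $(k,v)=\Psi(x)$ the map takes the form $\tilde\pi(k,v)=(g(k,v),\,v)$ for a $C^q$ map $g$ into the finite‑dimensional space $R$. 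A short computation shows $D\tilde\pi_{(k,v)}$ is onto iff $\partial_k g(k,v)\colon K\to R$ is onto, and since $\tilde\pi^{-1}(r,v)=\{k: g(k,v)=r\}\times\{v\}$, it follows that $(r,v)$ is a regular value of $\tilde\pi$ if and only if $r$ is a regular value of the finite‑dimensional map $g(\cdot,v)$.

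Finally I would run the fiberwise Sard argument. For each fixed $v$, the classical Sard theorem (applicable since $\dim K=\dim R$, so only $C^1$ regularity is required) gives that the set of critical values of $g(\cdot,v)$ has Lebesgue measure zero in $R$. To conclude that the full critical‑value set $\mathcal{C}:=\tilde\pi(\Sigma)$ — where $\Sigma=\{(k,v): \partial_k g(k,v)\ \text{not onto}\}$ is relatively closed — is meager, I would cover the domain by countably many sets $\overline B^K_j\times O_j$ with $\overline B^K_j$ a closed ball in the \emph{finite‑dimensional} space $K$ (hence compact) and $O_j$ open in $F_1$; on each such set $\tilde\pi$ is proper (it is the identity in the $F_1$‑direction and the $K$‑direction is compact), hence a closed map, so $\tilde\pi(\Sigma\cap(\overline B^K_j\times O_j))$ is closed, and were it to contain an open set $\mathcal{O}$ then for $v$ in a nonempty open subset of $F_1$ the slice $\mathcal{O}_v$ would be a positive‑measure set of critical values of $g(\cdot,v)$, contradicting Sard — so each of these pieces is nowhere dense. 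Thus $\mathcal{C}$ is meager, its complement (the regular values of $\tilde\pi$, since points off the image are vacuously regular values) is residual, and undoing the localization of the second paragraph proves the theorem. The main obstacle is exactly this last step: the naive ``every fiber is null, so the total set is small'' reasoning fails over an infinite‑dimensional base, and one must exploit the Fredholm hypothesis — which is precisely what makes the fiber direction $K$ finite‑dimensional and its balls compact — to turn the fiberwise Sard statements into a genuine nowhere‑density (Baire) statement.
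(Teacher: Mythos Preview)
The paper does not give its own proof of this statement: Theorem~C.2 is simply quoted from Smale \cite{Sm} and used as a black box in the bumpy-metric argument. Your proposal reproduces the classical Smale proof---localize to charts, use the Fredholm hypothesis and the inverse function theorem to put $\pi$ in the local normal form $(k,v)\mapsto(g(k,v),v)$ with $g$ taking values in a finite-dimensional space, apply the finite-dimensional Sard theorem fiberwise, and upgrade to a Baire statement via local properness coming from compactness of balls in the finite-dimensional kernel direction. This is correct and is exactly the argument in \cite{Sm}; the only point that bears watching is the ``closed map'' step, where one should cover by sets $\overline{B}^K_j\times O_j$ with $O_j$ chosen so that the closure of the set lies inside a slightly larger normal-form chart, so that the limit point $(k,v)$ produced by compactness of $\overline{B}^K_j$ actually lies in the domain---but this is the standard refinement already present in Smale's original argument, and your identification of this as the delicate step is on target.
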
 
\begin{thm}\label{thm:St} [cf. \cite{ACS}, Theorem 35]
    Let $(M,\partial M,g)$ be a compact Riemannian surface with a strictly convex boundary. Then $\mathcal{S}^{q}$ is a separable Banach manifold of class $C^{q-1}$ and $\pi^{q}_{\mathcal{S}}$ is a $C^{q-1}$ Fredholm map of Fredholm index $0$. Furthermore, given any $(\gamma, [w_{\Omega}]) \in \mathcal{S}^{q}$, the nullity of $w_{\Omega}$ equals the dimension of the kernel of the linear map $D \pi^{q}_{\mathcal{S}}(\gamma, [w_{\Omega}])$. In particular, $w$ admits a non-trivial Jacobi field associated with $L^{\theta}$-functional if and only if the point $(\gamma, [w_{\Omega}])$ is critical for $\pi^{q}_{\mathcal{S}}$. 
\end{thm}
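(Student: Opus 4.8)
The plan is to run White's parametric bumpy-metric argument \cite{W}, in the boundary form of Ambrozio--Carlotto--Sharp \cite{ACS} and the one-dimensional form of Chodosh--Mantoulidis \cite{CM} and Staffa \cite{St}, by exhibiting $\mathcal{S}^{q}$ as the transverse zero set of a $C^{q-1}$ section of a Banach bundle over $\Gamma^{q}\times\mathcal{PE}$ and then reading off the Fredholm and nullity statements from the implicit function theorem. First I would fix $\alpha\in(0,1)$ and parametrize a neighborhood of a class $[w_{\Omega}]\in\mathcal{PE}$ by normal graphs over $w$: the local model is a neighborhood of $0$ in the Banach space $\mathcal{N}^{j+1,\alpha}(w)$ of $C^{j+1,\alpha}$ normal sections $\varphi\,\nu$ along $w$, where the endpoint value $\varphi(p_{i})$ records a displacement of $p_{i}$ along $\partial M$ (meaningful since $\theta\in(0,\pi/2)$ keeps $w$ transverse to $\partial M$), the $\mathrm{Diff}(I)$-quotient being absorbed by working with graphs. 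Over $\Gamma^{q}\times\mathcal{PE}$ I would take the bundle $\mathcal{E}$ with fiber $C^{j-1,\alpha}$ normal sections along $w$ together with $\mathbb{R}^{2}$, and define
\[
\Phi(\gamma,[w_{\Omega}])=\bigl(\,\kappa_{\gamma}(w)\,,\ \langle\eta,\overline{\nu}\rangle_{\gamma}+\cos\theta\ \text{at }p_{1}\,,\ \langle\eta,\overline{\nu}\rangle_{\gamma}+\cos\theta\ \text{at }p_{2}\,\bigr),
\]
where $\kappa_{\gamma}(w)$ is the geodesic curvature vector of $w$ in $(M,\gamma)$; then $\mathcal{S}^{q}=\Phi^{-1}(0)$ by Proposition 2.1 and the definition of the contact angle, and restricting to $\mathcal{PE}$ keeps us within embedded, connected, multiplicity-one curves, so none of the higher-dimensional subtleties about coverings arise. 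Since $\kappa_{\gamma}$ depends on $\gamma$ through its Christoffel symbols, $\Phi$ is of class $C^{q-1}$ with the usual one-derivative loss; this part of the bookkeeping is verbatim as in \cite{W,ACS}, the only new feature being the two boundary evaluations, whose smoothness is elementary.

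Next I would identify, at a zero $(\gamma,[w_{\Omega}])$, the partial linearization of $\Phi$ in the curve direction. By the second variation formula (3), it is the Jacobi operator
\[
\mathcal{L}\varphi=-\varphi''-K\varphi\ \text{ on }w,\qquad \varphi'(p_{i})+c_{i}\,\varphi(p_{i})=0\ \ (i=1,2),
\]
with each $c_{i}$ built from $\kappa_{\partial M}(p_{i})$, $\sin\theta$ and the corner geometry, regarded as a bounded operator $\mathcal{N}^{j+1,\alpha}(w)\to C^{j-1,\alpha}(w)\oplus\mathbb{R}^{2}$. This is a self-adjoint Sturm--Liouville operator on a compact interval with Robin boundary data, hence Fredholm of index $0$, and its cokernel is canonically isomorphic to its kernel $J$, the space of Jacobi fields of $L^{\theta}$ along $w$. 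Granting the transversality of $\Phi$ to the zero section (the next step), the implicit function theorem makes $\mathcal{S}^{q}$ a separable $C^{q-1}$ Banach submanifold with $T_{(\gamma,[w_{\Omega}])}\mathcal{S}^{q}=\ker D\Phi$, and the standard argument of \cite{W} shows that $\pi^{q}_{\mathcal{S}}$ is then Fredholm of the same index $0$ as the partial linearization above; moreover $\ker D\pi^{q}_{\mathcal{S}}(\gamma,[w_{\Omega}])=\{(\dot\gamma,\dot w)\in\ker D\Phi:\dot\gamma=0\}=\ker D_{[w_{\Omega}]}\Phi=J$, which is exactly the claimed equality of the nullity of $w_{\Omega}$ with $\dim\ker D\pi^{q}_{\mathcal{S}}$; in particular $(\gamma,[w_{\Omega}])$ is critical for $\pi^{q}_{\mathcal{S}}$ precisely when $w$ carries a nontrivial Jacobi field.

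The hard part is the transversality, i.e.\ that $D\Phi(\gamma,[w_{\Omega}])$ is onto: since the partial linearization in the curve direction has finite-dimensional cokernel $\cong J$, it suffices to show that for each nonzero $\varphi\in J$ there is a metric variation $\dot\gamma\in\mathrm{Sym}^{2}(T^{*}M)$ with
\[
\int_{w}\bigl\langle \partial_{\gamma}\kappa_{\gamma}(w)[\dot\gamma],\ \varphi\,\nu\bigr\rangle\,ds\ +\ (\text{boundary terms in }\varphi(p_{i}))\ \neq\ 0 .
\]
A nonzero Jacobi field solves a second-order linear ODE on the interior segment, so it has only isolated zeros; choosing $x_{0}$ in the open geodesic with $\varphi(x_{0})\neq 0$ and a small perturbation $\dot\gamma$ supported near $x_{0}$, the first-order dependence of the geodesic curvature on $\dot\gamma$ lets me prescribe the sign of the integrand near $x_{0}$, making the interior pairing nonzero while the boundary terms vanish; this is White's core construction \cite{W}. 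The genuinely new bookkeeping, carried out as in \cite{ACS}, is to also produce metric variations localized near an endpoint $p_{i}$ that move $\kappa_{\partial M}(p_{i})$, hence the boundary components of $\Phi$, without disturbing the interior geodesic curvature to first order, so that the interior and boundary contributions to the cokernel pairing can be matched independently; here the strict convexity of $\partial M$ and the restriction $\theta\in(0,\pi/2)$ are what keep the perturbed boundary data admissible. Assembling these localized perturbations into a full proof that no nonzero $\varphi\in J$ is annihilated by every admissible $\dot\gamma$ is where I expect the bulk of the work, and it is the main obstacle.
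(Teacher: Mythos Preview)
Your proposal is correct and follows essentially the same route as the paper: both realize $\mathcal{S}^{q}$ as the zero set of a first-variation map (the paper's $(H,\Theta)$ is your $\Phi$), identify the curve-direction linearization as a self-adjoint Sturm--Liouville operator with Robin data and hence Fredholm of index $0$ (the paper's Proposition~C.5), and then invoke the abstract submersion criterion from \cite{ACS} (the paper's Proposition~C.6). The paper in fact defers the verification of the transversality hypothesis entirely to ``arguments in the proof of Theorem~35 in \cite{ACS},'' whereas you spell out White's interior-perturbation construction explicitly.

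One remark that should save you effort: you overestimate the difficulty of the boundary step. Since a nonzero Jacobi field on a compact interval solves a second-order linear ODE, it has only isolated zeros, so your interior perturbation $\dot\gamma$ supported near some $x_{0}\in\mathring{w}$ with $\varphi(x_{0})\neq 0$ already pairs nontrivially with \emph{every} nonzero $\varphi\in J$, and the boundary terms vanish because $\dot\gamma$ is compactly supported away from $\partial M$. The separate boundary-localized metric variations you anticipate as ``the main obstacle'' are therefore unnecessary in this one-dimensional setting; this is precisely the simplification that makes the geodesic-network version in \cite{CM,St} lighter than the hypersurface version in \cite{ACS}.
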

\subsection{The structure theorem} In this subsection, we prove Theorem C.3. We will outline the proof of Theorem C.3 and mostly emphasize the necessary change we need here. Let $V := I \times \mathbb{R}$ be a normal bundle over $I$ and $V_{I}:= T^{*}I \otimes V$. We take the convention that the positive direction of the normal bundle is chosen toward outside of an enveloping domain for each embedding of curves. Also denote $E:V \rightarrow M$ to be an embedding of $V$ to $M$ such that $E(\partial V) \subseteq \partial M$ with contact angle $\theta$ and each section of $V$ corresponds to the normal coordinate of the embedding in $M$. We consider a smooth map $l:\Gamma \rightarrow C^{q}(I \times V \times V_{I} ;\mathbb{R})$ where $\Gamma$ is an open subset of a Banach space $\mathcal{S}^{q}$. For $u \in C^{2}(I,V)$, we define a functional $L$ to be
\begin{equation}
    L(\gamma, u) = \int_{0}^{1} l(\gamma)(x,u,u'(x)) dx + \bigg[ b(\gamma)(x,u(x))\bigg]^{1}_{0}.
\end{equation}
Let us write $l_{\gamma}$ and $b_{\gamma}$ in place of $l(\gamma)$ and $b(\gamma)$. The first variational formula (see Proposition 41 of \cite{ACS}) gives the following:
\begin{align}
    \frac{d}{dt}\Big|_{t=0} L(\gamma, u+tv) &= \int_{0}^{1}(D_{2}l_{\gamma}(x,u,u')-(D_{3}l_{\gamma}(x,u,u'))_{x})v(x) dx \\
    &+ \bigg[(D_{3}l_{\gamma}(x,u,u')(x)+D_{2}b_{\gamma}(x,u))v(x)\bigg]_{0}^{1} \nonumber.
\end{align}
We define the first variation map $(H, \Theta): \Gamma \times C^{2}(I,V) \rightarrow C^{0}(I,V) \times C^{0}(\partial I,V)$ by 
\begin{align}
   H(\gamma,u) &= -D_{2}l_{\gamma}(x,u,u')-(D_{3}l_{\gamma}(x,u,u'))_{x}\\
   \Theta(\gamma,u) &= D_{3}l_{\gamma}(x,u,u')+D_{2}b_{\gamma}(x,u).
\end{align}
The first variation (49) gives the following directly.
\begin{prop} [cf. \cite{ACS}, Proposition 41 (1)] Suppose $L,H,\Theta$ is defined as in (48), (50), (51).  Then $H$ and $\Theta$ are $C^{q-2}$ banach maps.
\end{prop}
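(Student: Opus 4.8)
The plan is to follow the proof of Proposition~41(1) in \cite{ACS} essentially verbatim, since the only new ingredient in the capillary setting is the boundary term $b_{\gamma}$, which is handled exactly like the interior term. The whole statement reduces to two standard facts: partial differentiation is a bounded linear (hence smooth) operator $C^{k}(U)\to C^{k-1}(U)$ on the relevant function spaces, and the superposition (\emph{$\omega$-}) lemma, which asserts that the composition map $(g,v)\mapsto g\circ v$, $C^{m}(U)\times C^{0}(I,U)\to C^{0}(I,\mathbb{R})$, is of class $C^{m}$ (a $k$-th derivative in the $v$-slot consumes $k$ derivatives of $g$, while the dependence on $g$ is linear).

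First I would normalize the setup so that everything lives over a precompact piece of the bundle: fixing a curve $u_{0}\in C^{2}(I,V)$, choose a relatively compact open $U\subset I\times V\times V_{I}$ containing the $1$-jets $x\mapsto(x,u(x),u'(x))$ for all $u$ in a $C^{2}$-neighbourhood of $u_{0}$, and a precompact $U_{\partial}$ over $\partial I$ for the boundary term; over such sets $l_{\gamma},b_{\gamma}$ and all their derivatives up to the orders we use are uniformly controlled for $\gamma$ near a given metric. Then I would expand the $x$-derivative in the definition of $H$ by the chain rule, writing
\begin{multline*}
H(\gamma,u)= -D_{2}l_{\gamma}(\cdot,u,u') - D_{1}D_{3}l_{\gamma}(\cdot,u,u') \\ - D_{2}D_{3}l_{\gamma}(\cdot,u,u')\,u' - D_{3}D_{3}l_{\gamma}(\cdot,u,u')\,u'',
\end{multline*}
and leaving $\Theta(\gamma,u)=D_{3}l_{\gamma}(\cdot,u,u')+D_{2}b_{\gamma}(\cdot,u)$ as it stands. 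In this form $H$ is a finite sum of terms, each the composition of: a map $\gamma\mapsto g_{\gamma}$, where $g_{\gamma}$ is one of $D_{2}l_{\gamma},D_{1}D_{3}l_{\gamma},D_{2}D_{3}l_{\gamma},D_{3}D_{3}l_{\gamma}$, which is smooth into $C^{q-2}(U)$ because $\gamma\mapsto l_{\gamma}\in C^{q}(U)$ is smooth by hypothesis and partial differentiation is bounded linear; the evaluation $(g,u)\mapsto[x\mapsto g(x,u(x),u'(x))]$; and the bounded linear maps $u\mapsto u'$ and $u\mapsto u''$ from $C^{2}(I,V)$ into $C^{0}$. Likewise $\gamma\mapsto D_{3}l_{\gamma}$ is smooth into $C^{q-1}(U)\subset C^{q-2}(U)$ and $\gamma\mapsto D_{2}b_{\gamma}$ is smooth into $C^{q-1}(U_{\partial})$.

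Invoking the $\omega$-lemma for the evaluation maps (over $I$ for $H$ and for the interior part of $\Theta$, over $\partial I$ for $D_{2}b_{\gamma}(\cdot,u)$, using that the $1$-jet of $u$ depends linearly and boundedly on $u\in C^{2}(I,V)$ and stays inside $U$), then composing with the smooth maps $\gamma\mapsto g_{\gamma}$ and the bounded linear maps $u\mapsto u',u''$, and using that finite sums, products and compositions of $C^{q-2}$ Banach maps are again $C^{q-2}$, one concludes that $H$ and $\Theta$ are $C^{q-2}$. The main — and essentially only — technical point is the bookkeeping in the $\omega$-lemma: checking that the composition operator loses exactly the expected number of derivatives and that all neighbourhoods can be chosen uniformly so that the compositions are everywhere defined; this is entirely standard (see the references cited in \cite{ACS} and \cite{W}) and introduces nothing new beyond the embedded minimal hypersurface case, the capillary condition entering only through the fixed contact angle built into the embedding $E$ of the normal bundle and through the boundary term $b_{\gamma}$.
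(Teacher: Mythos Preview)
Your proposal is correct and spells out precisely the argument the paper is pointing to: the paper gives no proof of its own beyond the sentence ``The first variation (49) gives the following directly'' together with the citation to \cite{ACS}, Proposition~41(1), and your write-up is exactly the standard $\omega$-lemma / chain-rule computation that underlies that reference. There is nothing to add.
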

Now we consider the linearization of $(H,\Theta)$. The linearization $L_{H}: C^{2}(I, V) \rightarrow C^{0} (I,V)$ and $L_{\Theta}: C^{2}(I, V) \rightarrow C^{0}(\partial I,V)$ are defined by 
\begin{equation*}
    L_{H}(\gamma,u)[v] = \frac{d}{dt} \bigg|_{t=0} H(\gamma, u+tv),\,\, L_{\Theta}(\gamma,u)[v] = \frac{d}{dt} \bigg|_{t=0} \Theta(\gamma, u+tv)
\end{equation*}
and we define a Jacobi operator $L(\gamma,u): C^{2}(I, V) \rightarrow C^{0} (I,V) \times C^{0}(\partial I,V)$ by 
\begin{equation}
    L(\gamma,u)= (L_{H}(\gamma,u),L_{\Theta}(\gamma,u)).
\end{equation} 
Note that the formula of $L_{H}$ and $L_{\theta}$ is explicitly written by
\begin{align}
    L_{H}(\gamma,u)[v](\cdot) &= - (D_{22}l_{\gamma}(x,u,u')v + D_{23}l_{\gamma}(x,u,u')v')(\cdot) \\
   \nonumber &+  (D_{33}l_{\gamma}(x,u,u')v'(\cdot) + D_{23}l_{\gamma}(x,u,u')v(\cdot))_{x} \\
   \nonumber L_{\Theta}(\gamma,u)[v](\cdot) &= ((D_{23}l_{\gamma}(x,u,u')+D_{22}b_{\gamma}(x,u))v+ D_{33}l_{\gamma}(x,u,u')v')(\cdot)
\end{align}
\begin{prop} [cf. \cite{ACS}, Proposition 45 (2),(3)]
Suppose the Jacobi operator $L$ is given by (52) and $L_{H}$ is uniformly elliptic in the sense of
\begin{equation}
    D_{33}l_{\gamma}(x,u,u')>0
\end{equation}
for $x \in [0,1]$. Then
\begin{enumerate}
    \item Let us denote $\epsilon:\partial I \rightarrow I$ as the canonical inclusion. The operator $L$ is self-adjoint in the sense of
    \begin{equation*}
        (L(\gamma,u)[v], (w , w \circ \epsilon)) = (L(\gamma,u)[w], (v , v \circ \epsilon))
    \end{equation*}
    for any $v, w \in C^{2}(I,V)$.
    \item The operator $L$ is Fredholm of index $0$.
\end{enumerate}
\begin{proof}
    (1) follows from integration by parts. $L$ is a Fredholm operator by ODE uniqueness theorem and boundary conditions $L^{\Theta}$ on $\partial I$. Since $L$ is a self-adjoint operator and $L_{H}$ is uniformly ellptic in the sense of (54), the equation $(L_{H},L_{\Theta})(\phi,\psi) = (\xi,\eta)$ becomes a nondegenerate 2nd order ODE with Robin boundary condition by (53). Hence, the simpler ODE version of arguments in the proof of Proposition 45 in \cite{ACS} applies and we obtain that the Fredholm index is $0$.
\end{proof}
\end{prop}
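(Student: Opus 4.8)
The plan is to obtain part (1) from the observation that $L(\gamma,u)$ is the Hessian of a smooth real-valued functional at a critical point, and to obtain part (2) from the one-dimensional specialization of elliptic boundary-value Fredholm theory, feeding part (1) back in to identify the index. For (1), the conceptual point is that the bilinear form $Q(v,w):=(L(\gamma,u)[v],(w,w\circ\epsilon))$ is exactly the second variation $\frac{d^{2}}{ds\,dt}\big|_{s=t=0}L(\gamma,u+sv+tw)$ of the functional (48), hence manifestly symmetric in $v\leftrightarrow w$, which is the asserted identity $(L[v],(w,w\circ\epsilon))=(L[w],(v,v\circ\epsilon))$. To make the pairing bookkeeping explicit — and to confirm that no boundary terms are dropped — I would verify this directly from (53): in $\int_{0}^{1}(L_{H}[v]\,w-L_{H}[w]\,v)\,dx$ the $D_{22}l_{\gamma}$-terms and the interior $D_{33}l_{\gamma}\,v'w'$-terms cancel by symmetry, the two $D_{23}l_{\gamma}$ cross-contributions — one appearing directly in $L_{H}$, the other produced by integrating the $(D_{23}l_{\gamma}v)_{x}$ term by parts — cancel in the interior, and what survives is the boundary expression $[\,D_{33}l_{\gamma}(v'w-w'v)\,]_{0}^{1}$. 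Since, again by (53), $L_{\Theta}[v]\,w-L_{\Theta}[w]\,v=D_{33}l_{\gamma}(v'w-w'v)$ at each endpoint (the $D_{23}l_{\gamma}$ and $D_{22}b_{\gamma}$ contributions being symmetric in $v\leftrightarrow w$), the boundary pairing $\int_{\partial I}(L_{\Theta}[v]\,w-L_{\Theta}[w]\,v)$ — taken with the orientation inherited from the first variation formula (49) — cancels exactly this surviving term.

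For part (2), hypothesis (54) says precisely that, after expanding (53), $L_{H}$ is a uniformly elliptic second-order linear ODE operator $L_{H}[v]=D_{33}l_{\gamma}\,v''+(\text{terms of order }\le 1)$ with everywhere-invertible leading coefficient, while $L_{\Theta}$ imposes at each of the two endpoints a first-order (Robin-type) condition whose $v'$-coefficient is again $D_{33}l_{\gamma}\neq 0$; in particular $L_{\Theta}$ is a complementing boundary operator for $L_{H}$, which in one space dimension requires nothing beyond the non-vanishing of these top-order coefficients. Consequently the combined map $L=(L_{H},L_{\Theta})$ between the natural spaces on $I$ — say $C^{2,\alpha}(I,V)\to C^{0,\alpha}(I,V)\times\mathbb{R}^{2}$, or its Sobolev analogue — obeys the a priori estimate $\|v\|_{C^{2,\alpha}}\le C(\|L[v]\|+\|v\|_{C^{0}})$, and with the compact embedding $C^{2,\alpha}(I,V)\hookrightarrow C^{0}(I,V)$ this yields closed range and a finite-dimensional kernel (at most $2$-dimensional, since $V=I\times\mathbb{R}$ has rank one). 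For the index I would invoke part (1): self-adjointness of $L$ with respect to the natural pairing (the $L^{2}$ inner product on $I$ together with the Euclidean pairing at the two boundary points) identifies $\mathrm{coker}\,L$ with $\ker L$, so $\mathrm{ind}\,L=\dim\ker L-\dim\mathrm{coker}\,L=0$ — this is the ``simpler ODE version'' of the argument in \cite{ACS}, a nondegenerate second-order ODE boundary-value problem with the correct count of boundary conditions having index $0$.

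The main obstacle is organizational rather than substantive: one must fix the right functional-analytic framework on the interval carrying the two Robin conditions, and in part (1) be careful with the sign and orientation conventions in the boundary pairing on $C^{0}(\partial I,V)$ so that the self-adjointness identity reads exactly as stated — this is the step where the model after \cite{ACS} must be consulted (and where the sign conventions between (50)--(51) and (53) should be reconciled). Once those conventions are fixed, the integration by parts of (1) and the ODE Fredholm theory of (2) are routine; in particular (54) alone — with no further hypothesis on $l_{\gamma}$ or $b_{\gamma}$ — guarantees the nondegeneracy of the boundary-value problem and hence both assertions.
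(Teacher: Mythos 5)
Your proposal is correct and follows essentially the same route as the paper: part (1) by integration by parts (equivalently, symmetry of the second variation of the functional (48)), and part (2) by observing that (54) turns $(L_{H},L_{\Theta})$ into a nondegenerate second-order ODE boundary value problem with Robin conditions and then using self-adjointness to identify the cokernel with the kernel, which is exactly the ``ODE version of Proposition 45 in \cite{ACS}'' the paper invokes, with you supplying the a priori estimate, compact embedding, and cokernel details the paper leaves to the citation. The sign/orientation issue you flag in the boundary pairing is an ambiguity in the paper's own conventions (compare (49), (50)--(51), and (53)) rather than a gap in your argument, and your insistence on reconciling those conventions so that the interior boundary term $[D_{33}l_{\gamma}(v'w-w'v)]_{0}^{1}$ is cancelled by the $L_{\Theta}$-pairing is the intended reading.
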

The arguments in the proof of Proposition 46 in \cite{ACS} and we obtain the following structural result on the neighborhood of a fixed embedding. For a point $(\gamma_{0}, u_{0})$ with
\begin{equation*}
    H(\gamma_{0},u_{0}) = \Theta(\gamma_{0},u_{0}) =0,
\end{equation*}
we set $K = ker L(\gamma,u)$.
\begin{prop}[\cite{ACS}, Proposition 46]
Suppose $L$ is elliptic in the sense of (54). Assume that for every nonzero $\kappa \in K$ there exists a differentiable curve $\gamma : (-1,1) \rightarrow \Gamma$ with $\gamma(0) = \gamma_{0}$ and such that
\begin{equation*}
    \frac{\partial}{\partial s} \bigg|_{s=0} ((H(\gamma(s),u_{0}),\Theta(\gamma(s),u_{0})),(\kappa, \kappa \circ \epsilon)) \neq 0.
\end{equation*}
Then the map $(H, \Theta):\Gamma \times C^{2}(I,V)$ is a submersion near $(\gamma_{0},u_{0})$, so there exists a neighborhood $U_{\Gamma} \times U_{C^{2}(I,V)}$ of such point such that
\begin{equation*}
    \mathcal{M} = \{ (\gamma,u) \in U_{\Gamma} \times U_{C^{2}(I,V)} | H(\gamma,u) = 0, \Theta(\gamma,u) =0 \}
\end{equation*}
is a $C^{q-2}$-Banach submanifold with tangent space $ker D(H, \Theta)(\gamma,u)$ at each $(\gamma,u) \in \mathcal{M}$. Furthermore, the restricted projection operator $\Pi_{|\mathcal{M}}:\mathcal{M} \rightarrow \Gamma$, where $\Pi: \Gamma \times C^{2}(I,V) \rightarrow \Gamma$ is given by $\Pi(\gamma,u) = \gamma$, is a $C^{q-2}$-Fredholm map of index $0$.
\end{prop}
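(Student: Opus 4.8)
The plan is to run the standard submersion argument of White and of \cite{ACS}, with the fixed capillary contact angle $\theta$ playing the role of the orthogonality condition in the free‑boundary case. Set $Y := C^{0}(I,V)\times C^{0}(\partial I,V)$ and $F := (H,\Theta)\colon \Gamma\times C^{2}(I,V)\to Y$; by Proposition C.4 this is of class $C^{q-2}$, and at the zero $(\gamma_{0},u_{0})$ its differential splits as $DF(\gamma_{0},u_{0})[(v,\xi)] = A(v)+L[\xi]$, where $A := D_{\gamma}F(\gamma_{0},u_{0})$ is bounded and $L := L(\gamma_{0},u_{0})$ is the Jacobi operator of (52). By the ellipticity (54) and Proposition C.5, $L$ is self-adjoint for the pairing $(\,\cdot\,,(\,\cdot\,,\,\cdot\circ\epsilon))$ and Fredholm of index $0$; hence $\operatorname{Im}(L)$ is closed, and self-adjointness together with index $0$ makes $\{(\kappa,\kappa\circ\epsilon):\kappa\in K\}$ a topological complement of $\operatorname{Im}(L)$ in $Y$, where $K := \ker L$. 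I write $\pi_{Q}\colon Y\to Q:=Y/\operatorname{Im}(L)$ for the quotient map, so that $Q$ pairs nondegenerately with $K$ through $(\,\cdot\,,(\,\cdot\,,\,\cdot\circ\epsilon))$.

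The first substantive step is to promote the pointwise hypothesis into surjectivity of $DF(\gamma_{0},u_{0})$. Consider $B\colon T_{\gamma_{0}}\Gamma\times K\to\mathbb{R}$, $B(v,\kappa) := (A(v),(\kappa,\kappa\circ\epsilon))$. Since $(\gamma_{0},u_{0})$ is a zero of $(H,\Theta)$ and the second slot of the pairing is independent of $s$, for any curve $s\mapsto\gamma(s)$ with $\gamma(0)=\gamma_{0}$ one has $B(\dot\gamma(0),\kappa) = \frac{d}{ds}\big|_{s=0}(F(\gamma(s),u_{0}),(\kappa,\kappa\circ\epsilon))$; the hypothesis thus says precisely that for each nonzero $\kappa\in K$ some $v$ has $B(v,\kappa)\ne 0$. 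As $\dim K<\infty$, the map $K\to(T_{\gamma_{0}}\Gamma)^{*}$, $\kappa\mapsto B(\cdot,\kappa)$, is injective, so for a basis $\kappa_{1},\dots,\kappa_{m}$ of $K$ the functionals $B(\cdot,\kappa_{j})$ are independent and one can choose $v_{1},\dots,v_{m}$ with $\det(B(v_{i},\kappa_{j}))\ne 0$; hence $v\mapsto\pi_{Q}(A(v))$ is onto $Q$. Combined with $\operatorname{Im}(L)$ closed of codimension $m$, this gives $A(T_{\gamma_{0}}\Gamma)+\operatorname{Im}(L)=Y$, i.e.\ $DF(\gamma_{0},u_{0})$ is surjective. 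Its kernel is complemented: the projection $\ker DF\to T_{\gamma_{0}}\Gamma$ has image $A^{-1}(\operatorname{Im}L)=\ker(\pi_{Q}\circ A)$, closed of codimension $m$ hence split in $T_{\gamma_{0}}\Gamma$, and kernel $\{0\}\times K$, finite-dimensional hence split; the usual Fredholm bookkeeping (as in \cite{ACS}) then produces a closed complement of $\ker DF$ in the ambient space. Therefore $DF(\gamma_{0},u_{0})$ admits a bounded right inverse, and since this is an open condition $F$ is a submersion on a neighborhood $U_{\Gamma}\times U_{C^{2}(I,V)}$ of $(\gamma_{0},u_{0})$.

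The submersion theorem for Banach manifolds then gives that $\mathcal{M}=F^{-1}(0)\cap(U_{\Gamma}\times U_{C^{2}(I,V)})$ is a $C^{q-2}$ Banach submanifold with $T_{(\gamma,u)}\mathcal{M}=\ker DF(\gamma,u)$ at each of its points. For the restricted projection $\Pi|_{\mathcal{M}}\colon(\gamma,u)\mapsto\gamma$, the differential at $(\gamma,u)\in\mathcal{M}$ is $(v,\xi)\mapsto v$ on $\ker DF(\gamma,u)$: its kernel is $\{0\}\times\ker L(\gamma,u)$, whose dimension $\dim\ker L(\gamma,u)$ is by definition the nullity of the geodesic (this is the nullity clause of Theorem C.3), and its image is $A^{-1}(\operatorname{Im}L(\gamma,u))$, of codimension $\dim\operatorname{coker}(L(\gamma,u))=\dim\ker L(\gamma,u)$ in $T_{\gamma}\Gamma$ by the surjectivity established above (valid on the whole neighborhood) and the index-$0$ self-adjointness of $L(\gamma,u)$. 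Hence $\Pi|_{\mathcal{M}}$ is a $C^{q-2}$ Fredholm map of index $\dim\ker L(\gamma,u)-\dim\ker L(\gamma,u)=0$, which is exactly the assertion.

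The functional-analytic skeleton above is essentially the White/\cite{ACS} argument; the part I expect to be genuinely delicate is the bookkeeping of the boundary term in the capillary case. One must make sure the contact-angle constraint enters $b_{\gamma}$ so that $\Theta$, equivalently the boundary operator $L_{\Theta}$ of (53), is a bona fide oblique boundary operator for which $L$ is self-adjoint with respect to $(\,\cdot\,,(\,\cdot\,,\,\cdot\circ\epsilon))$ and (54) holds with the correct sign --- the point where the factor $1/\sin\theta$ of the second variation (3) appears --- and that this is the \emph{same} boundary pairing occurring in the hypothesis, so that the identification $\operatorname{coker}(L)\cong K$ and the surjectivity step are mutually consistent. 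Granting Propositions C.4 and C.5 and the ellipticity (54), the remainder is routine, but this is precisely where the capillary setting departs from the free-boundary setting of \cite{ACS}.
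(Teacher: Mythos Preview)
The paper does not give its own proof of this proposition; it is stated as a direct citation of \cite[Proposition~46]{ACS}, with the paper remarking only that ``the arguments in the proof of Proposition~46 in \cite{ACS}'' apply. Your proposal reconstructs precisely that White/\cite{ACS} argument --- surjectivity of $DF$ from the hypothesis via the pairing with $K$, splitting of the kernel, the implicit function theorem, and the Fredholm-index computation for $\Pi|_{\mathcal{M}}$ --- so you are aligned with what the paper defers to, and your write-up is correct.
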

We denote a domain associated to the variation $u$ by $\Omega_{u}$. Let a signed boundary length function $b_{\gamma} : I \times \mathbb{R} \rightarrow \mathbb{R}$ which only depends on boundary values $u(0)$ and $u(1)$ to be
\begin{equation}
    b_{\gamma}(x,u(x)) = \cos \theta |\partial^{b} \Omega_{u}|.
\end{equation}

\begin{proof} [Proof of Theorem \ref{thm:St}] We take $l_{\gamma}$ to be the nonparametric length functional and a boundary length function in (55). Then $(\gamma, [\omega_{\Omega}]) \in \mathcal{S}^{q}$ if and only if a corresponding pair of an embedding $(\gamma,u)$ is stationary for the functional $L$.

We apply proposition C.5 and arguments in the proof of Theorem 35 in \cite{ACS} directly works and we obtain the structural theorem.
\end{proof}

\end{document}